\newtheorem{thm}{Theorem}[section]
\newtheorem{la}[thm]{Lemma}
\newtheorem{Defn}[thm]{Definition}
\newtheorem{Remark}[thm]{Remark}
\newtheorem{Conj}[thm]{Conjecture}
\newtheorem{prop}[thm]{Proposition}
\newtheorem{cor}[thm]{Corollary}
\newtheorem{Example}[thm]{Example}
\newtheorem{Number}[thm]{\!\!}
\newenvironment{defn}{\begin{Defn}\rm}{\end{Defn}}
\newenvironment{example}{\begin{Example}\rm}{\end{Example}}
\newenvironment{rem}{\begin{Remark}\rm}{\end{Remark}}
\newenvironment{numba}{\begin{Number}\rm}{\end{Number}}
\newenvironment{proof}{{\noindent\bf Proof.}}%
                  {\nopagebreak\hspace*{\fill}$\Box$\medskip\par}
\newcommand{\Punkt}{\nopagebreak\hspace*{\fill}$\Box$}
\newcommand{\wb}{\overline}
\newcommand{\tensor}{\otimes}
\newcommand{\mto}{\mapsto}
\newcommand{\isom}{\cong}
\newcommand{\N}{{\mathbb N}}
\newcommand{\B}{{\mathbb B}}
\newcommand{\bO}{{\mathbb O}}
\newcommand{\K}{{\mathbb K}}
\newcommand{\F}{{\mathbb F}}
\newcommand{\bL}{{\mathbb L}}
\newcommand{\Q}{{\mathbb Q}}
\newcommand{\Z}{{\mathbb Z}}
\newcommand{\cA}{{\cal A}}
\newcommand{\cV}{{\cal V}}
\newcommand{\ch}{{\mathfrak h}}
\newcommand{\one}{{\bf 1}}
\newcommand{\sub}{\subseteq}
\newcommand{\sbull}{{\scriptscriptstyle \bullet}}
\newcommand{\GL}{\operatorname{GL}}
\newcommand{\SL}{\operatorname{SL}}
\newcommand{\Iso}{\operatorname{Iso}}
\newcommand{\id}{\operatorname{id}}
\newcommand{\Spann}{\operatorname{span}}
\newcommand{\tor}{\operatorname{tor}}
\newcommand{\car}{\operatorname{char}}
\newcommand{\op}{\operatorname{op}}
\newcommand{\dt}{\operatorname{det}}
\begin{document}
%
%
%
%
\begin{center}
{\Large\bf Lectures on Lie Groups over Local Fields}\\[7mm]
{\bf Helge Gl\"{o}ckner\footnote{The author was
supported by the
German Research Foundation (DFG),
projects GL 357/2-1, GL 357/5-1
and GL 357/6-1.}}\vspace{2mm}
\end{center}
\begin{abstract}
\hspace*{-6mm}The goal of these notes is to
provide an introduction
to $p$-adic Lie groups
and Lie groups over fields
of Laurent series,
with an emphasis
on the dynamics of automorphisms
and the specialization of Willis' theory
to this setting.
In particular, we shall
discuss the scale,
tidy subgroups
and contraction groups
for automorphisms of
Lie groups over local fields. Special attention is paid to the case
of Lie groups over local fields of positive characteristic.\vspace{2mm}
\end{abstract}
{\bf Classification:}
Primary 22E20; Secondary 22D05, 22E35, 26E30, 37D10.\\[2mm]
{\bf Key words:}
Lie group, local field, ultrametric field,
totally disconnected group, locally profinite group, Willis theory,
tidy subgroup, contraction group, scale, Levi factor,
invariant manifold, stable manifold,
positive characteristic.
\section*{Introduction}
Lie groups over local fields
(notably $p$-adic Lie groups)
are among those totally disconnected,
locally compact groups which
are both well accessible
and arise
frequently.
For example,
$p$-adic Lie groups play an important role
in the theory of pro-$p$-groups
(i.e.,
projective limits of
finite $p$-groups),
where they are called
``analytic pro-$p$-groups.''
In ground-breaking work in the 1960s,
Michel Lazard
obtained deep insights into the structure
of analytic pro-$p$-groups
and characterized them
within the class of pro-$p$-groups~\cite{Laz}
(see \cite{Dix} and \cite{HOR} for later developments).\\[2.1mm]
It is possible to study Lie groups over local
fields from various points of view and
on various levels,
taking more and more structure
into account.
At the most
basic level,
they can be considered as mere topological
groups.
Next, we can consider them as analytic manifolds,
enabling us to use ideas from Lie theory
and properties of analytic functions.
At the highest level,
one can focus on those Lie groups
which are linear
algebraic groups over local fields
(in particular, reductive or semi-simple groups),
and study them using techniques from
the theory of algebraic groups
and algebraic geometry.\\[2.1mm]
While much of the literature
focusses either on pro-$p$-groups
or algebraic
groups, our
aims are complementary:
We emphasize
aspects at the borderline
between Lie theory
and the structure theory of
totally disconnected,
locally compact groups,
as developed in \cite{Wil},
\cite{Wi2}
and \cite{BaW}.
In particular,
we shall discuss the scale,
tidy subgroups and
contraction groups
for automorphisms of
Lie groups over local fields.\\[2.1mm]
For each of these
topics, an algebraic
group structure does not play a role,
but merely the Lie group structure.\\[2.1mm]
{\bf Scope and structure of the lectures.}
After an introduction
to some essentials of Lie
theory
and calculus over local fields,
we discuss
the scale, contraction groups,
tidy subgroups (and related topics)
in three stages:
\begin{itemize}
\item
In a first step,
we consider linear automorphisms
of finite-dimensional
vector spaces over local
fields.
\end{itemize}
A sound understanding of this
special case is essential.
\begin{itemize}
\item
Next,
we turn to automorphisms
of $p$-adic Lie groups.
\item
Finally, we discuss Lie groups
over local fields of positive
characteristic and their automorphisms.
\end{itemize}
For various reasons,
the case of positive characteristic
is more complicated than the $p$-adic
case, and it is one of our goals
to explain which additional ideas
are needed to tackle this case.\\[2.1mm]
As usual in an expository
article, we shall present many facts
without proof.
However, we have taken care to
give at least a sketch of proof for all
central results, and to explain the
main ideas.
In particular, we have taken care to explain
carefully the ideas needed to deal with Lie groups over
local fields of positive characteristic,
and found it appropriate to
give slightly more details when they are concerned
(the more so because
not all of the results
are available yet in the published literature).\\[2.1mm]
Mention should be made of what these
lectures do not strive to achieve.\\[2.1mm]
First of all, we do not intend
to give an introduction to linear
algebraic groups over local fields,
nor to Bruhat-Tits buildings or to the
representation theory
of $p$-adic groups
(and harmonic analysis thereon).
The reader will find nothing
about these topics here.\\[2.1mm]
Second, we shall hardly speak
about the theory of analytic
pro-$p$-groups,
although this theory is certainly located at a very similar position
in the
spectrum
ranging from topological groups to algebraic groups
(one of Lazard's results will be recalled
in Section~\ref{sec3},
but no later developments).
One reason is that we want to focus
on aspects related to the structure
theory of totally disconnected, locally compact groups---which
is designed primarily for the
study of non-compact groups.\\[2.1mm]
It is interesting to note that
also in the area of pro-$p$-groups,
Lie groups
over fields (and suitable pro-$p$-rings)
of positive characteristic are
attracting more
and more attention.
The reader is referred to the seminal
work~\cite{LaS}
and subsequent research
(like \cite{ANS}, \cite{BaL}, \cite{CdS},
\cite{JZ} and \cite{JZK}; cf.\ also \cite{HOR}).\\[2.1mm]
Despite the importance of $p$-adic Lie groups
in the area of pro-$p$-groups,
it is not fully clear yet whether
$p$-adic Lie groups (or Lie groups over local fields)
play a fundamental role in the theory
of general
locally compact groups
(comparable to that
of real Lie groups, as in \cite{HaM}).
Two recent studies indicate that
this may be the case,
at least in the presence of group actions.
They describe situations
where Lie groups over local fields
are among the building blocks for general structures:
\begin{description}
\item[Contraction groups]
Let $G$ be a totally disconnected,
locally compact group
and $\alpha\colon G\to G$ be an
automorphism
which is \emph{contractive},
i.e., $\alpha^n(x)\to 1$ as $n\to\infty$,
for each $x\in G$ (where $1\in G$ is the neutral element).
Then the torsion elements form a closed
subgroup $\tor(G)$ and
\[
G\; =\; \tor(G)\times G_{p_1}\times\cdots \times G_{p_n}
\]
for certain $\alpha$-stable $p$-adic Lie groups $G_p$
(see~\cite{SIM}).
\item[{\boldmath $\Z^n$}-actions]
If $\Z^n$ acts with a dense orbit
on a locally compact group~$G$ via automorphisms,
then $G$ has a compact normal subgroup~$K$ invariant
under the action
such that
\[
G/K\; \isom \; \bL_1\times\cdots\times \bL_m\, ,
\]
where $\bL_1,\ldots, \bL_m$ are local fields
of characteristic~$0$
and the action is diagonal,
via scalar multiplication
by field elements~\cite{DSW}.
\end{description}
In both studies, Lazard's
theory of analytic pro-$p$-groups
accounts for the occurrence of $p$-adic Lie groups.\\[5mm]
{\bf Overview of the lectures}\\[2mm]
1.\hspace*{4mm}Generalities\,\dotfill\,\pageref{sec1}\\[1.5mm]
2.\hspace*{4mm}Basic facts concerning $p$-adic
Lie groups\,\dotfill\,\pageref{sec2}\\[1.5mm]
3.\hspace*{4mm}Lazard's characterization of $p$-adic
Lie groups\,\dotfill\,\pageref{sec3}\\[1.5mm]
4.\hspace*{4mm}Iteration of linear automorphisms\,\dotfill\,\pageref{secitlin}\\[1.5mm]
5.\hspace*{4mm}Scale and tidy subgroups for automorphisms
of $p$-adic Lie groups\,\dotfill\,\pageref{sec5}\\[1.5mm]
6.\hspace*{4mm}$p$-adic contraction groups\,\dotfill\,\pageref{sec6}\\[1.5mm]
7.\hspace*{3.9mm}Pathologies in positive
characteristic\,\dotfill\,\pageref{secpatho}\\[1.5mm]
8.\hspace*{3.6mm}Tools from non-linear analysis:
invariant manifolds\,\dotfill\,\pageref{sec9}\\[1.5mm]
9.\hspace*{3.6mm}The scale, tidy subgroups and contraction groups
in positive\\
\hspace*{6.6mm}characteristic\,\dotfill\,\pageref{sec10}\\[1.5mm]
10.\hspace*{1.7mm}The structure of contraction groups in the
case of positive\\
\hspace*{6.6mm}characteristic\,\dotfill\,\pageref{sec11}\\[1.5mm]
11.\hspace*{1.6mm}Further generalizations to positive
characteristic\,\dotfill\,\pageref{sec12}\vspace{2mm}
\section{Generalities}\label{sec1}
This section compiles
elementary definitions and facts concerning local fields,\linebreak
analytic functions and Lie groups.\\[2.1mm]
Basic information on
local fields can be found in many books,
e.g.\ \cite{Wei} and \cite{Sch}.
Our main sources for Lie groups over local
fields are~\cite{Ser}
and~\cite{Bo2}.
\subsection*{{\normalsize Local fields}}\label{locfds}
\noindent
By a \emph{local field}, we mean
a totally disconnected, locally compact, non-discrete
topological field~$\K$.
Each local field admits an \emph{ultrametric
absolute value} $|.|$ defining
its topology, i.e.,
\begin{itemize}
\item[(a)]
$|t|\geq 0$ for each $t\in \K$, with equality if and only if
$t=0$;
\item[(b)]
$|st|=|s|\cdot |t|$ for all $s,t\in \K$;
\item[(c)]
The \emph{ultrametric inequality} holds, i.e.,
$|s+t|\leq \max\{|s|,|t|\}$ for all
$s,t\in \K$.
\end{itemize}
An example of such an absolute value
is what we call the \emph{natural absolute
value}, given by $|0|:=0$ and
\begin{equation}\label{dffrmnatu}
|x|\; :=\; \Delta_\K(m_x)
\quad \mbox{for $\,x\in \K\setminus\{0\}$}
\end{equation}
(cf.\ \cite[Chapter~II, \S2]{Wei}),
where $m_x\colon \K\to\K$, $y\mto xy$
is scalar multiplication by~$x$ and $\Delta_\K(m_x)$
its module
(the definition of which is recalled
in~\ref{recamodu}).\footnote{Note that if $\K$ is an extension of $\Q_p$
of degree~$d$, then $|p|=p^{-d}$
depends on the extension.}\\[2.1mm]
It is known that every local field~$\K$
either is
a field of formal Laurent
series over some finite field
(if $\car(\K)>0$),
or a finite extension
of the field of $p$-adic numbers for some
prime~$p$ (if $\car(\K)=0$).
Let us fix our notation concerning these
basic examples.
\begin{example}
Given a prime number $p$,
the field $\Q_p$ of $p$-adic
numbers is the completion
of $\Q$ with respect to
the $p$-adic absolute value,
\[
\left| p^k \frac{n}{m}\right|_p\;:=\;p^{-k}\quad
\mbox{for $k\in \Z$ and $n,m\in \Z\setminus p\Z$.}
\]
We use the same notation,
$|.|_p$, for the extension of the $p$-adic
absolute value to~$\Q_p$.
Then the topology coming from $|.|_p$ makes
$\Q_p$ a local field,
and $|.|_p$ is the natural absolute value
on~$\Q_p$.
Every non-zero element $x$ in $\Q_p$ can be written
uniquely in the form
\[
x\;=\; \sum_{k=n}^\infty a_k\, p^k
\]
with $n\in \Z$, $a_k\in \{0,1,\ldots, p-1\}$
and $a_n\not=0$.
Then $|x|_p=p^{-n}$.
The elements
of the form $\sum_{k=0}^\infty a_kp^k$
form the subring
$\Z_p=\{x\in \Q_p \colon |x|_p\leq 1\}$
of~$\Q_p$, which is open
and also compact,
because it is homeomorphic to
$\{0,1,\ldots, p-1\}^{\N_0}$
via $\sum_{k=0}^\infty a_kp^k\mto (a_k)_{k\in \N_0}$.
\end{example}
\begin{example}
Given a finite field~$\F$ (with~$q$ elements),
we let $\F(\!(X)\!)\sub \F^\Z$
be the field of formal Laurent
series
$\sum_{k=n}^\infty a_kX^k$
with $a_k\in \F$. 
Here addition is
pointwise, and multiplication
is given by the Cauchy product.
We endow $\F(\!(X)\!)$
with the topology arising from the ultrametric absolute value
\begin{equation}\label{nather}
\left|\sum_{k=n}^\infty a_kX^k\right|
\; :=\; q^{-n}\quad\mbox{if $\,a_n\not=0$.}
\end{equation}
Then the set $\F[\hspace*{-.15mm}[X]\hspace*{-.17mm}]$
of formal power series
$\sum_{k=0}^\infty a_kX^k$
is a compact and open subring of
$\F(\!(X)\!)$, and thus
$\F(\!(X)\!)$
is a local field.
Its natural absolute value
is given by~(\ref{nather}).
\end{example}
\noindent
Beyond local fields,
we shall occasionally
use \emph{ultrametric fields}
$(\K,|.|)$.
Thus $\K$ is a field
and $|.|$ an ultrametric absolute value
on~$\K$ which defines a non-discrete
topology on~$\K$.
For example, we occasionally
use an algebraic closure
$\wb{\K}$ of a local field~$\K$
and exploit that an ultrametric absolute value~$|.|$
on~$\K$ extends uniquely
to an ultrametric absolute value on~$\wb{\K}$
(see, e.g., \cite[Theorem~16.1]{Sch}).
The same notation, $|.|$,
will be used for the extended absolute value.
An ultrametric field $(\K,|.|)$
is called \emph{complete} if $\K$ is a complete
metric space with respect to the
metric given by $d(x,y):=|x-y|$.
\subsection*{{\normalsize Basic consequences of
the ultrametric inequality}}\label{bscconsultra}
Let $(\K,|.|)$ be an ultrametric field
and $(E,\|.\|)$ be a normed $\K$-vector
space whose norm is ultrametric in the sense that
$\|x+y\|\leq \max\{\|x\|,\|y\| \}$ for all
$x,y\in E$.\\[2.1mm]
Since $\|x\|=\|x+y-y\| \leq \max\{\|x+y\|,\|y\|\}$,
it follows that $\|x+y\|\geq \|x\|$ if $\|y\|<\|x\|$
and hence
\begin{equation}\label{winner}
\|x+y\|\;=\;\|x\|\quad \mbox{for all $x,y\in E$ such that
$\|y\|<\|x\|$}
\end{equation}
(``the strongest wins,'' \cite[p.\,77]{Rbe}).
Hence, small perturbations
have much smaller impact in the ultrametric
case than they would have in the real
case (a philosophy which will
be made concrete below).\\[2.1mm]
The ultrametric inequality
has many other useful consequences.
For example, consider the balls
\[
B_r^E(x)\; :=\;
\{y\in E\colon \|y-x\|<r\}
\]
and
\[
\wb{B}_r^E(x)\; :=\;
\{y\in E\colon \|y-x\|\leq r\}
\]
for $x\in E$, $r\in \;]0,\infty[$.
Then $B_r^E(0)$ and $\wb{B}^E_r(0)$
are \emph{subgroups} of $(E,+)$
with non-empty interior
(and hence are both open and closed).
Specializing to $E=\K$, we see that
\begin{equation}\label{dfnvalr}
\bO\; :=\; \{t\in \K\colon |t|\leq 1\}
\end{equation}
is an open subring of~$\K$,
its so-called \emph{valuation ring}.
If $\K$ is a local field,
then $\bO$ is a compact
subring of~$\K$ (which is maximal
and hence independent of the choice of
absolute value).
\subsection*{Calculation of indices}\label{calcind}
Indices of compact
subgroups inside others are omni\-present
in the structure theory
of totally disconnected groups.
We therefore
perform
some basic calculations
of indices now.
Haar measure $\lambda_G$
on a locally compact group~$G$
will be used as a tool
and also the notion of the
\emph{module} of an automorphism,
which measures the distortion
of Haar measure.
We recall:
\begin{numba}\label{recamodu}
Let $G$ be a locally compact group
and $\B(G)$ be the $\sigma$-algebra
of Borel subsets of~$G$.
We let $\lambda_G\colon \B(G)\to [0,\infty]$
be a Haar measure on~$G$, i.e.,
a non-zero Radon measure which is left-invariant
in the sense that $\lambda_G(gA)=\lambda_G(A)$ for all
$g\in G$ and $A\in \B(G)$.
It is well-known that a Haar measure always exists,
and that it is unique up to multiplication with a positive
real number (cf.\ \cite{HaR}).
If $\alpha\colon G\to G$ is a (bicontinuous)
automorphism, then also
\[
\B(G)\to [0,\infty],\qquad
A\mto \lambda_G(\alpha(A))
\]
is a left-invariant non-zero Radon measure
on~$G$ and hence a multiple of Haar measure:
There exists $\Delta(\alpha)>0$ such that
$\lambda_G(\alpha(A))=\Delta(\alpha)\lambda_G(A)$
for all $A\in \B(G)$.
If $U\sub G$ is a relatively compact,
open, non-empty subset,
then
\begin{equation}\label{calcmodul}
\Delta(\alpha)=\frac{\lambda_G(\alpha(U))}{\lambda_G(U)}
\end{equation}
(cf.\ \cite[(15.26)]{HaR}, where however
the conventions differ). We also write $\Delta_G(\alpha)$
instead of $\Delta(\alpha)$, if we wish to emphasize
the underlying group~$G$.
\end{numba}
\begin{rem}\label{indcoop}
Let $U$ be a compact open
subgroup of $G$. If $U\sub \alpha(U)$,
with index $[\alpha(U):U]=:n$,
we can pick representatives $g_1,\ldots, g_n\in \alpha(U)$
for the left cosets of $U$ in~$\alpha(U)$.
Exploiting the left-invariance  of Haar measure,
(\ref{calcmodul})
turns into
\begin{equation}\label{howgtmod}
\Delta(\alpha)\, =\, \frac{\lambda_G(\alpha(U))}{\lambda_G(U)}
\, =\, \sum_{j=1}^n\frac{\lambda_G(g_jU)}{\lambda_G(U)}
\, =\, [\alpha(U):U]\,.
\end{equation}
If $\alpha(U)\sub U$, applying (\ref{howgtmod})
to $\alpha^{-1}$ instead of~$\alpha$
and $\alpha(U)$ instead of~$U$, we obtain
\begin{equation}\label{shrinkindex}
\Delta(\alpha^{-1})
\, =\, [U:\alpha(U)]\,.
\end{equation}
\end{rem}
\noindent
In the following,
the group $\GL_n(\K)$ of invertible $n\times n$-matrices
will frequently be identified with the group $\GL(\K^n)$ of linear
automorphisms of~$\K^n$.
\begin{la}\label{laindx}
Let $\K$ be a local field,
$|.|$
its natural absolute value, 
and~$A \in \GL_n(\K)$.
Then
\begin{equation}\label{modvia}
\Delta_{\K^n}(A)\;=\; |\hspace*{-.4mm}\dt A\hspace*{.4mm}|
\;=\;\prod_{i=1}^n |\lambda_i|\,,
\end{equation}
where $\lambda_1,\ldots,\lambda_n$
are the eigenvalues of~$A$
in an algebraic closure $\wb{\K}$
of~$\K$.
\end{la}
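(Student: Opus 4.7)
Both maps $A\mapsto \Delta_{\K^n}(A)$ and $A\mapsto |\dt A|$ are homomorphisms from $\GL_n(\K)$ to $(0,\infty)$: the first by functoriality of the module under composition, the second because $\dt$ is a homomorphism $\GL_n(\K)\to \K^\times$ and $|\cdot|$ is multiplicative. The plan is therefore to verify the identity $\Delta_{\K^n}(A)=|\dt A|$ on a generating set, and then to deduce the formula involving the eigenvalues separately.

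I would fix the compact open subgroup $U:=\bO^n\sub \K^n$ and Haar measure $\lambda$ on $\K^n$ normalized as the product of Haar measures on the factors (the Haar measure on $\K^n$ is unique up to scaling, so the choice is harmless in (\ref{calcmodul})). Two computations do the work:

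\emph{Diagonal matrices.} For $D=\mathrm{diag}(d_1,\ldots,d_n)$, one has $D(U)=d_1\bO\times\cdots\times d_n\bO$, so by Fubini
\[
\Delta_{\K^n}(D)\;=\;\frac{\lambda(D(U))}{\lambda(U)}\;=\;\prod_{i=1}^n \frac{\lambda_\K(d_i\bO)}{\lambda_\K(\bO)}\;=\;\prod_{i=1}^n \Delta_\K(m_{d_i})\;=\;\prod_{i=1}^n|d_i|\;=\;|\dt D|,
\]
using the very definition~(\ref{dffrmnatu}) of the natural absolute value.

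\emph{Elementary transvections.} For a shear $T_{ij}(c):=I+c\,e_{ij}$ with $i\neq j$, $\dt T_{ij}(c)=1$. On the other side, $T_{ij}(c)$ acts on $\K^n$ by changing only the $i$-th coordinate, by addition of the constant $c x_j$ once $x_j$ is fixed. Writing $\K^n\cong \K\times \K^{n-1}$ (singling out the $i$-th factor) and applying Fubini, the integral against $\mathbf 1_{T_{ij}(c)(A)}$ reduces to a translation in the $i$-th variable for each fixed value of the remaining coordinates, hence preserves $\lambda$. Thus $\Delta_{\K^n}(T_{ij}(c))=1=|\dt T_{ij}(c)|$.

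Since $\SL_n(\K)$ is generated by such transvections and any $A\in \GL_n(\K)$ factors as $A=D\cdot S$ with $D=\mathrm{diag}(\dt A,1,\ldots,1)$ and $S\in \SL_n(\K)$, multiplicativity of both sides gives $\Delta_{\K^n}(A)=|\dt A|$ on all of $\GL_n(\K)$. Finally, the characteristic polynomial of $A$ factors in $\wb\K[X]$ as $\prod_{i=1}^n(X-\lambda_i)$; setting $X=0$ yields $\dt A=\prod_i \lambda_i$, so the unique extension of $|\cdot|$ to $\wb\K$ (multiplicative by \cite[Theorem~16.1]{Sch}) delivers $|\dt A|=\prod_i|\lambda_i|$.

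The main obstacle, and the step demanding a little care, is the shear calculation: strictly speaking $T_{ij}(c)(U)\neq U$ when $|c|>1$, so the identity cannot be read off from a single compact open subgroup. The Fubini argument circumvents this by computing $\Delta$ directly from (\ref{calcmodul}) on arbitrary Borel sets via the product structure of Haar measure, where the shear becomes a translation in one fibre.
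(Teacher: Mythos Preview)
Your proof is correct and follows the same overall strategy as the paper: verify $\Delta_{\K^n}(A)=|\det A|$ on a generating set of $\GL_n(\K)$, then handle the eigenvalue product separately. Two execution differences are worth noting.

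For the shear step, the paper checks only the specific elementary matrices $\mathbf{1}+e_{ij}$ (coefficient~$1$), together with all diagonal matrices, and observes that these already generate $\GL_n(\K)$. The advantage is that $\mathbf{1}+e_{ij}$ maps the ball $U=B_1^\K(0)^n$ onto itself by the ultrametric inequality, so $\Delta(\mathbf{1}+e_{ij})=1$ is immediate from $\lambda(U)/\lambda(U)=1$ with no Fubini argument required. Your more general transvections $T_{ij}(c)$ work too, but as you correctly flag, they force the extra fibrewise-translation computation when $|c|>1$.

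For the eigenvalue identity, your argument is cleaner than the paper's. The paper passes to a finite extension $\bL\supseteq\K$ containing all eigenvalues, relates $\Delta_{\bL^n}(A)$ to $\Delta_{\K^n}(A)$ via the degree, and invokes the formula $|x|=\sqrt[d]{\Delta_\bL(m_x)}$ for the extended absolute value. You instead use directly that $\det A=\prod_i\lambda_i$ in $\wb{\K}$ together with multiplicativity of the (unique) extended absolute value, which gives $|\det A|=\prod_i|\lambda_i|$ in one line. Both are valid; yours is shorter.
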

\begin{proof}
Let $\lambda_{\K^n}$ and $\lambda_\K$ be Haar measures
on $(\K^n,+)$ and $(\K,+)$, respectively, such that
$\lambda_{\K^n}=\lambda_\K\otimes\cdots\otimes \lambda_\K$.
Let $\bO\sub \K$ be the valuation ring
and $U:=\bO^n$.
Since both $A\mto \Delta_{\K^n}(A)$ and
$A\mto |\hspace*{-.4mm}\dt A\hspace*{.4mm}|$ are homomorphisms
from $\GL_n(\K)$ to the multiplicative group $]0,\infty[$,
it suffices to check the first
equality in (\ref{modvia})
for diagonal
matrices and elementary matrices
of the form $\one+e_{ij}$ with
matrix units $e_{ij}$,
where $i\not=j$
(because these generate $\GL_n(\K)$ as a group).
For such an elementary matrix~$A$, we have
$A(U)=U$ and thus
\[
\Delta_{\K^n}(A)\;=\;
\frac{\lambda_{\K^n}(A(U))}{\lambda_{\K^n}(U)}
\;=\;1\;=\;
|\hspace*{-.4mm}\dt A\hspace*{.4mm}|\,.
\]
If $A$ is diagonal with diagonal entries $t_1,\ldots, t_n$,
we also have
\[
\Delta_{\K^n}(A)\;=\;
\frac{\lambda_{\K^n}(A(U))}{\lambda_{\K^n}(U)}
\;=\;\prod_{i=1}^n \frac{\lambda_\K(t_i \bO)}{\lambda_\K(\bO)}
\;=\; \prod_{i=1}^n \Delta_\K(m_{t_i})
\;=\; \prod_{i=1}^n |t_i|
\;=\;
|\hspace*{-.4mm}\dt A\hspace*{.4mm}|\,,
\]
as required.
The final equality in~(\ref{modvia})
is clear if all eigenvalues lie in~$\K$.
For the general case, pick
a finite extension~$\bL$ of~$\K$ containing
the eigenvalues,
and let $d:=[\bL:\K]$
be the degree of the field extension.
Then $\Delta_{\bL^n}(A)=(\Delta_{\K^n}(A))^d$.
Since the extended
absolute value is given
by
\[
|x|\; =\; \sqrt[d]{\Delta_{\bL}(m_x)}
\]
(see \cite[Chapter 9, Theorem~9.8]{Jcb}
or \cite[Exercise 15.E]{Sch}),
the final equality follows
from the special case
already treated (applied now to~$\bL$).
\end{proof}
\begin{numba}\label{NEWball}
For later use,
consider a ball $B_r(0)\sub \Q_p^n$
with respect to the maximum norm,
where $r\in \;]0,\infty[$.
Let $m\in \N$ and $A\in\GL_n(\Q_p)$
be the diagonal matrix with diagonal entries
$p^m , \ldots  ,\hspace*{.4mm}p^m$.
Then (\ref{shrinkindex})
and (\ref{modvia}) imply that
\begin{equation}\label{goodcf}
[B_r(0):B_{p^{-m}r}(0)]\; =\;
[B_r(0):A.B_r(0)]
\; =\; |\hspace*{-.3mm}\dt(A^{-1})\hspace*{.4mm}|\; =\; p^{mn}\,.
\end{equation}
\end{numba}
\subsection*{Analytic functions,
manifolds and Lie groups}\label{anft}
Given a local field $(\K,|.|)$
and $n\in \N$,
we equip $\K^n$
with the maximum norm,
$\|(x_1,\ldots, x_n)\|:=\max\{|x_1|,\ldots,|x_n|\}$
(the choice of norm does not really matter
because all norms are equivalent; see \cite[Theorem~13.3]{Sch}).
If $\alpha\in \N_0^n$ is a multi-index, 
we write $|\alpha|:=\alpha_1+\cdots+\alpha_n$.
We mention that confusion with the absolute value $|.|$ is
unlikely; the intended
meaning of~$|.|$ will always be clear from the context.
If $\alpha\in \N_0^n$ and $y=(y_1,\ldots, y_n)\in \K^n$,
we abbreviate $y^\alpha:=y_1^{\alpha_1}\cdots \hspace*{.3mm}y_n^{\alpha_n}$,
as usual. See \cite{Ser} for the following concepts.
\begin{defn}
Given an open subset
$U\sub \K^n$, a map $f\colon U\to \K^m$
is called
\emph{analytic}\footnote{In other parts of the literature
related to rigid analytic geometry,
such functions are called \emph{locally analytic}
to distinguish them from functions which are globally given
by a power series.}
if it is given locally by a convergent
power series around each point $x\in U$, i.e.,
\[
f(x+y)=\sum_{\alpha\in \N_0^n} a_\alpha \, y^\alpha
\quad
\mbox{for all $\,y\in \wb{B}^{\K^n}_r\hspace*{-1mm}(0)$,}
\]
with $a_\alpha\in \K^m$
and some $r>0$ such that
$\wb{B}^{\K^n}_r\hspace*{-1mm}(x)\sub U$ and
\[
\sum_{\alpha\in \N_0^n} \|a_\alpha\|\,
r^{|\alpha|}\;<\;\infty\,.
\]
\end{defn}
\noindent
It can be shown that compositions
of analytic functions are analytic
\cite[Theorem, p.\,70]{Ser}.
We can therefore define an $n$-dimensional
analytic manifold~$M$
over a local field~$\K$
in the usual
way, namely as a Hausdorff
topological space~$M$,
equipped with a set~$\cA$ of homeomorphisms
$\phi$ from open subsets of~$M$ onto
open subsets of $\K^n$
such that the transition map
$\psi\circ \phi^{-1}$
is analytic, for all $\phi,\psi\in \cA$.\\[2.1mm]
Analytic mappings between
analytic manifolds are defined
as usual
(by checking analyticity
in local charts).\\[2.1mm]
A \emph{Lie group}
over a local field~$\K$ is a group~$G$,
equipped with a (finite-dimensional)
analytic manifold structure which turns
the group multiplication
\[
m \colon G\times G\to G\,,\quad
m(x,y):=xy
\]
and the group inversion
\[
j \colon G\to G\,,\quad
j(x):=x^{-1}
\]
into analytic mappings.\\[2.1mm]
Besides the additive groups of finite-dimensional
$\K$-vector spaces,
the most obvious examples of $\K$-analytic Lie
groups are general linear groups.
\begin{example}
$\GL_n(\K)=\dt^{-1}(\K^\times)$ is an open subset
of the space $M_n(\K)\isom \K^{n^2}$
of $n\times n$-matrices
and hence is an $n^2$-dimensional
$\K$-analytic manifold.
The group operations are
rational maps and hence analytic.
\end{example}
\noindent
More generally,
one can show (cf.\ \cite[Chapter~I, Proposition~2.5.2]{Mar}):
\begin{example}
Every (group of $\K$-rational points of a)
linear algebraic group defined over~$\K$
is a $\K$-analytic Lie group,
viz.\ every subgroup
$G\leq \GL_n(\K)$ which is the set of
joint zeros of a set of polynomial
functions $M_n(\K)\to\K$.
For example, $\SL_n(\K)=\{ A \in \GL_n(\K)\colon
\dt(A )=1\}$
is a $\K$-analytic Lie group.
\end{example}
\begin{rem}
There are much more general
Lie groups than linear Lie groups.
For instance,
in Remark~\ref{nonlinea}
we shall
encounter an analytic Lie group~$G$
over $\K=\F_p(\!(X)\!)$
which does not admit
a faithful, continuous
linear representation
$G\to\GL_n(\K)$ for any~$n$.
\end{rem}
\noindent
{\bf The Lie algebra functor.}
If $G$ is a $\K$-analytic Lie group,
then its tangent space $L(G):=T_1(G)$
at the identity element can be made
a Lie algebra via the identification
of $x\in L(G)$ with the corresponding
left invariant vector field on~$G$
(noting that the left invariant
vector fields form a Lie subalgebra
of the Lie algebra $\cV^\omega(G)$
of all analytic vector fields on~$G$).\\[2.1mm]
If $\alpha \colon G\to H$ is an analytic
group homomorphism between $\K$-analytic Lie groups,
then the tangent map $L(\alpha):=T_1(\alpha)\colon L(G)\to L(H)$
is a linear map and actually a Lie algebra
homomorphism (cf.\ \cite[Chapter~3, \S3.7 and \S3.8]{Bo2},
Lemma~5.1 on p.\,129 in
\cite[Part II, Chapter~V.1]{Ser}).
An \emph{analytic automorphism} of a Lie group $G$ is
an invertible group homomorphism $\alpha\colon G\to G$
such that both $\alpha$ and $\alpha^{-1}$ are analytic.
\subsection*{Ultrametric inverse function theorem}\label{ultinv}
Since small perturbations
do not change the size of a given non-zero vector
in the ultrametric case
(as ``the strongest wins''),
the ultrametric inverse function theorem
has a much nicer form
than its classical real counterpart.
Around a point
with invertible differential,
an analytic map
behaves on balls simply like
an affine-linear map
(namely its linearization).\\[2mm]
In the following three propositions,
we let $(\K,|.|)$ be a complete
ultrametric field and equip $\K^n$ with any
ultrametric norm (e.g.,
the maximum norm).
Given $x\in \K^n$ and $r>0$, we abbreviate $B_r(x):=B^{\K^n}_r(x)$.
The total differential of $f$ at~$x$
is denoted by~$f'(x)$.
Now the ultrametric inverse function theorem
(for analytic functions)
reads as follows:\footnote{A proof is obtained, e.g.,
by combining \cite[Proposition~7.1\,(b)$'$]{IMP}
(a $C^k$-analogue of Proposition~\ref{invfct})
with the version of
the inverse function theorem for
analytic maps from \cite[p.\,73]{Ser}.}
\begin{prop}\label{invfct}
Let $f\colon U\to \K^n$
be an analytic map on an open set $U\sub \K^n$
and $x\in U$
such that $f'(x)\in \GL_n(\K)$.
Then there exists $r>0$ such that
$B_r(x)\sub U$,
\begin{equation}\label{great}
f(B_s(y))\;=\;f(y)+f'(x).B_s(0)\quad
\mbox{for all $y\in B_r(x)$ and $s\in \;]0,r]$,}
\end{equation}
and $f|_{B_s(y)}$ is an analytic diffeomorphism
onto its open image.
If $f'(x)$ is an isometry,
then so is $f|_{B_r(x)}$ for small~$r$.\,\Punkt
\end{prop}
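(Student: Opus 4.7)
The plan is to reduce to a normalised setting and then exploit the ``strongest wins'' principle~(\ref{winner}). First I would replace~$f$ by the map $y\mapsto A^{-1}(f(x+y)-f(x))$, where $A:=f'(x)\in\GL_n(\K)$, thereby reducing to the case $x=0$, $f(0)=0$ and $f'(0)=\id$. Writing $f(y)=y+g(y)$, the function $g$ is then analytic with $g(0)=g'(0)=0$.

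Next I would shrink $r>0$ until $B_r(0)\sub U$, the Taylor series of~$g$ converges on $\wb B_r(0)$, and the operator norm satisfies $\|g'(y)\|_{\mathrm{op}}\leq \tfrac{1}{2}$ for all $y\in B_r(0)$. Expanding $g$ in a power series around an arbitrary $y\in B_r(0)$ and applying the ultrametric triangle inequality termwise should yield the basic estimate
\[
\|g(y+h)-g(y+h')\|\;\leq\;\tfrac{1}{2}\|h-h'\|\quad\text{for $h,h'\in B_r(0)$,}
\]
since the linear term is dominated by $\tfrac{1}{2}\|h-h'\|$ and, after a further shrinking of~$r$, the higher-order contributions (which factor through $h-h'$ times a polynomial in $h,h'$ of norm~$\leq r^{|\alpha|-1}$) are of strictly smaller size. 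This is the main technical step, and it is where the vanishing of~$g'(0)$ plays its decisive role: without it, the linear term would compete with $h-h'$ rather than be dominated by it.

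Setting $\phi_y(h):=h+(g(y+h)-g(y))$ for $y\in B_r(0)$, taking $h'=0$ in the basic estimate gives $\|\phi_y(h)-h\|<\|h\|$ for $0\neq h\in B_r(0)$, so by~(\ref{winner}) we obtain $\|\phi_y(h)\|=\|h\|$; in particular $\phi_y$ maps $B_s(0)$ into itself for every $s\in \;]0,r]$. For surjectivity onto $B_s(0)$, I would invoke Banach's fixed-point theorem in the complete (open-and-closed) ball $B_s(0)$: given $w\in B_s(0)$, the map $T(h):=w-(g(y+h)-g(y))$ preserves $B_s(0)$ and is a strict contraction with constant~$\tfrac{1}{2}$, so its unique fixed point~$h$ satisfies $\phi_y(h)=w$. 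Translating back through~$A$ then yields the desired equality~(\ref{great}).

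The remaining claims follow routinely. Since $f'=A(\id+g')$ stays invertible on $B_r(0)$ by the Neumann series, the analytic inverse function theorem of~\cite[p.\,73]{Ser} applies at each point of~$B_s(y)$ and, combined with the bijectivity established above, promotes~$f|_{B_s(y)}$ to an analytic diffeomorphism onto its open image $f(y)+A.B_s(0)$. Finally, if $A$ is itself an isometry, then
\[
\|f(y+h)-f(y)\|\;=\;\|A(h+(g(y+h)-g(y)))\|\;=\;\|h+(g(y+h)-g(y))\|\;=\;\|h\|
\]
by the same ``strongest wins'' argument, so $f|_{B_r(0)}$ is an isometry for sufficiently small~$r$.
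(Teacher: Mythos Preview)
The paper does not actually supply a proof of Proposition~\ref{invfct}; it only records, in a footnote, that a proof is obtained by combining the $C^1$-version in \cite[Proposition~7.1\,(b)$'$]{IMP} with the analytic inverse function theorem in \cite[p.\,73]{Ser}. Your proposal therefore goes well beyond what the paper itself provides: you give a self-contained argument via the contraction mapping principle, which is in fact the method underlying the cited reference~\cite{IMP}.

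Your argument is correct. The reduction to $x=0$, $f(0)=0$, $f'(0)=\id$ is legitimate and unwinds properly at the end (your final displayed formula is indeed $f(y+h)-f(y)=A\,\phi_y(h)$ for the original~$f$, with $g=A^{-1}(f-A)$). The Banach fixed-point step is sound because in a complete ultrametric field the open ball $B_s(0)$ is a closed subset of~$\K^n$ and hence complete. One small simplification: for the Lipschitz estimate it is cleaner to expand $g$ once around~$0$ rather than around each~$y$. Since $g(0)=g'(0)=0$, one has $g(z)=\sum_{|\alpha|\geq 2}c_\alpha z^\alpha$ on some $\wb{B}_R(0)$, and the telescoping identity for $a^\alpha-b^\alpha$ together with the ultrametric inequality gives directly
\[
\|g(a)-g(b)\|\;\leq\;\|a-b\|\cdot\sup_{|\alpha|\geq 2}\|c_\alpha\|\,r^{|\alpha|-1}
\;\leq\;\|a-b\|\cdot\frac{r}{R^{2}}\sup_\alpha\|c_\alpha\|R^{|\alpha|}
\]
for $a,b\in B_r(0)$, which is $\leq\tfrac12\|a-b\|$ once $r$ is small. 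This avoids having to control the higher Taylor coefficients at a moving base point~$y$, which your sketch leaves slightly implicit. With that adjustment, every step is fully justified.
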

\noindent
It is useful that $r$ can be chosen
uniformly in the presence of parameters.
As a special case of \cite[Theorem~8.1\,(b)$'$]{IMP} (which only
requires that $f$ be $C^k$ in a suitable sense),
an `ultrametric
inverse function theorem with parameters'
is available:\footnote{Combining the cited theorem
and Proposition~\ref{invfct},
one can also show that,
for $Q$ and $r$ small enough,
$f_q|_{B_r(x)}\colon B_r(x)\to f_q(x)+f'_p(x).B_r(0)=f_p(x)+f'_p(0).B_r(0)$
is an analytic diffeomorphism
for each $q\in Q$.
However, this additional information
shall not be used
in the current work. We also mention that the results from \cite{IMP}
were strengthened
further in~\cite{IM2}.}
\begin{prop}\label{invpar}
Let $P\sub \K^m$ and $U\sub \K^n$ be open,
$f\colon P \times U\to \K^n$
be a $\K$-analytic map, $p\in P$
and $x\in U$
such that $f'_p(x)\in \GL_n(\K)$,
where $f_p:=f(p,\sbull)\colon U\to\K^n$.
Then there exists an open neighbourhood $Q\sub P$ of~$p$
and $r>0$ such that
$B_r(x)\sub U$ and
\begin{equation}\label{great2}
f_q(B_s(y))\;=\;f_q(y)+f'_p(x).B_s(0)
\end{equation}
for all $q\in Q$, $y\in B_r(x)$ and $s\in \;]0,r]$.
If $f'_p(x)$ is an isometry,
then also $f_q|_{B_r(x)}$ is an isometry
for all $q\in Q$,
if $Q$ and~$r$ are chosen sufficiently small.\,\Punkt
\end{prop}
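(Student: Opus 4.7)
The plan is to combine Proposition~\ref{invfct} with a perturbation argument exploiting the ultrametric principle that small perturbations do not alter the ball behaviour of invertible linear maps. As a first simplification, I would pre-compose with $(f'_p(x))^{-1}$ to reduce to the case $f'_p(x)=\id_{\K^n}$; the general statement then follows by applying $f'_p(x)$ to both sides of the conclusion, and an isometric linear map remains isometric under conjugation.

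Applying Proposition~\ref{invfct} to $f_p$ already handles the parameter-free case at $q=p$, producing some $r_1 > 0$. Since $f$ is analytic, the partial derivative $\partial_2 f$ with respect to the second variable is continuous on $P\times U$, so I can choose a neighbourhood $Q\times B_r(x)$ of $(p,x)$ (with $r\leq r_1$) on which $\|\partial_2 f(q,y)-\id\|_{\op}<1$ in any operator norm compatible with the maximum norm. By ``the strongest wins'', this forces $\partial_2 f(q,y).B_s(0)=B_s(0)$ for all $(q,y)\in Q\times B_r(x)$ and all $s>0$. After shrinking $r$ further, the Cauchy estimates applied to the power series of $f$ at $(p,x)$ provide uniform bounds on the Taylor coefficients of $f_q$ around $y$, valid for all $q\in Q$ and $y\in B_r(x)$.

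For the inclusion $f_q(B_s(y))\subseteq f_q(y)+B_s(0)$, I would expand $f_q$ analytically around $y$: the linear term $\partial_2 f(q,y).(z-y)$ lies in $B_s(0)$ by the previous step, while the Cauchy bounds make the higher-order terms strictly smaller in norm, so ``the strongest wins'' puts the total in $B_s(0)$. For the reverse inclusion, given $b\in B_s(0)$, I would solve $f_q(y+w)-f_q(y)=b$ for $w$ by an ultrametric Banach fixed point argument: setting $\psi(w):=f_q(y+w)-f_q(y)-w$, the map $\Phi(w):=b-\psi(w)$ sends the complete space $\overline{B}_s(0)$ into itself (since $\|\psi(w)\|<\|w\|$ by the same Taylor estimate) and is a strict contraction; its unique fixed point in $\overline{B}_s(0)$ is the required $w$.

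The main obstacle I anticipate is the uniform bookkeeping: ensuring a single pair $(Q,r)$ that works simultaneously for \emph{all} radii $s\in\;]0,r]$ rather than some $s$-dependent pair. The ultrametric setting is what makes this feasible, because the nonlinear Taylor remainder scales as $O(\|w\|^{2})$ whereas the dominant linear part scales as $\|w\|=s$, and ``the strongest wins'' converts the resulting strict inequality into an exact identity on balls. For the final isometry assertion, once $f'_p(x)=\id$ (after normalization) is an isometry, the estimate $\|f_q(z)-f_q(z')-(z-z')\|<\|z-z'\|$ for $z,z'\in B_r(x)$ and $q\in Q$ (provided $Q$ and $r$ are sufficiently small) combines with ``the strongest wins'' to yield $\|f_q(z)-f_q(z')\|=\|z-z'\|$, finishing the proof.
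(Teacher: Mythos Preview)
Your outline is sound and would yield a correct self-contained proof. The paper, by contrast, does not argue the proposition at all: it simply records it as a consequence of the $C^k$ inverse function theorem with parameters from~\cite[Theorem~8.1\,(b)$'$]{IMP}, combined with Proposition~\ref{invfct}. In other words, the paper's route is ``analytic $\Rightarrow$ $C^k$, now quote the $C^k$ result,'' whereas you reprove the result directly via Taylor expansion and an ultrametric Banach fixed-point argument. Your approach has the advantage of being self-contained and of making the uniformity in the parameter~$q$ transparent; the paper's approach is shorter and applies verbatim in the $C^k$ setting (where Cauchy estimates are unavailable), which matters later in Sections~\ref{seccalc}--\ref{sec10}.

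Two small points to tighten in your write-up. First, for the fixed-point step you assert that $\Phi$ is a strict contraction but only justify the pointwise bound $\|\psi(w)\|<\|w\|$; to get a Lipschitz constant $c<1$ you should choose $Q$ and $r$ so that $\|\partial_2 f(q,z)-\id\|_{\op}\leq c<1$ uniformly on $Q\times B_r(x)$ (continuity gives this), and then bound $\|\psi(w)-\psi(w')\|$ via the power-series expansion of $f_q$ around~$y$. Second, since open balls in an ultrametric space are already closed and hence complete, you may run the fixed-point argument directly on $B_s(0)$ rather than $\wb{B}_s(0)$; this avoids the edge case $s=r$, where $y+w$ with $\|w\|=r$ could leave $B_r(x)$.
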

\noindent
We mention that the group of linear isometries
is large (an open subgroup).
\begin{prop}\label{propisos}
The group $\Iso(\K^n)$
of linear isometries
is open in $\GL_n(\K)$.
If $\K^n$ is equipped with the maximum norm,
then
\[
\Iso(\K^n)\;=\; \GL_n(\bO)\; =\; \{ A\in \GL_n(\K)\colon
A,A^{-1}\in M_n(\bO)\}\,,
\]
where $\bO$ $($as in {\rm(\ref{dfnvalr}))}
is the valuation ring of~$\K$.
\end{prop}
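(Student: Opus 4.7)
The strategy is to prove the chain of equalities first (this requires the maximum norm), and then to derive openness from the ``strongest wins'' principle (\ref{winner}), which is really the heart of the matter.

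For the equalities, the second one is simply the definition of $\GL_n(\bO)$. For $\GL_n(\bO)\subseteq \Iso(\K^n)$: if $A=(a_{ij})\in M_n(\bO)$ and $x\in \K^n$, then $(Ax)_i=\sum_j a_{ij}x_j$ satisfies $|(Ax)_i|\leq \max_j|a_{ij}||x_j|\leq \|x\|$ by the ultrametric inequality, so $\|Ax\|\leq \|x\|$. Applying the same bound to $A^{-1}\in M_n(\bO)$ gives the reverse inequality, so $A$ is an isometry. For $\Iso(\K^n)\subseteq\GL_n(\bO)$: if $A$ is an isometry, then the $j$-th column $Ae_j$ satisfies $\|Ae_j\|=\|e_j\|=1$, which for the maximum norm forces every entry of $A$ to lie in $\bO$; the same for $A^{-1}$.

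For openness, I would work with the operator norm $\|\cdot\|_{\op}$ on $M_n(\K)$ induced by the given norm on $\K^n$. Because $M_n(\K)\cong\K^{n^2}$ is a finite-dimensional $\K$-vector space, all norms on it are equivalent (\cite[Theorem~13.3]{Sch}), so a set is open in $\|\cdot\|_{\op}$ iff it is open in the ambient topology inherited from $\GL_n(\K)\subseteq M_n(\K)$. Now fix $A_0\in\Iso(\K^n)$ and take any $B\in M_n(\K)$ with $\|B\|_{\op}<1$. For every $x\in\K^n\setminus\{0\}$ we have
\[
\|Bx\|\;\leq\;\|B\|_{\op}\|x\|\;<\;\|x\|\;=\;\|A_0 x\|,
\]
so by the ``strongest wins'' property (\ref{winner}) applied in $\K^n$ (with its ultrametric norm),
\[
\|(A_0+B)x\|\;=\;\|A_0x\|\;=\;\|x\|.
\]
In particular $A_0+B$ is a norm-preserving linear endomorphism of a finite-dimensional space, hence injective and thus invertible, and it is an isometry. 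Therefore the open $\|\cdot\|_{\op}$-ball of radius $1$ around $A_0$ lies in $\Iso(\K^n)$, which gives openness.

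I do not expect any serious obstacle here. The one spot where a naive real-analysis instinct could mislead is the openness claim, since in the archimedean setting $\operatorname{O}(n)$ is a proper submanifold, not an open subset, of $\GL_n(\R)$; in the ultrametric world the opposite happens, and the only genuine input is (\ref{winner}). The openness proof above moreover works for any ultrametric norm on $\K^n$, which is consistent with the statement being formulated before the choice of norm is fixed. The remaining bookkeeping---that $\|\cdot\|_{\op}$ is itself ultrametric and defines the same topology as the coordinate-wise maximum---is entirely standard and is covered by \cite[Theorem~13.3]{Sch}.
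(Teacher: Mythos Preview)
Your proof is correct and follows essentially the same route as the paper: both directions of the equality $\Iso(\K^n)=\GL_n(\bO)$ are argued exactly as you do, and openness is obtained from the ``strongest wins'' principle via the operator norm. The only cosmetic difference is that the paper shows openness by observing that $\Iso(\K^n)$ is a subgroup and hence open as soon as it contains the identity neighbourhood $\{\,{\bf 1}+A:\|A\|_{\op}<1\,\}$, whereas you work at an arbitrary $A_0\in\Iso(\K^n)$ directly; the underlying computation is the same.
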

\begin{proof}
The subgroup $\Iso(\K^n)$ will be open in $\GL_n(\K)$
if it is an identity neighbourhood.
The latter is guaranteed if if we can prove that
${\bf 1}+A\in \Iso(\K^n)$ for all $A\in M_n(\K)$
of operator norm
\[
\|A\|_{\op}\, :=\, \sup\Big\{ \frac{\|Ax\|}{\|x\|}\colon 0\not= x\in \K^n\Big\}\,
<\, 1\, .
\]
However, for each $0\not=x\in \K^n$
we have $\|Ax\|\leq \|A\|_{\op}\|x\|<\|x\|$
and hence $\|({\bf 1}+A)x\|=\|x+Ax\|=\|x\|$
by (\ref{winner}), as the strongest wins.

Now assume that $\|.\|$ is the maximum norm
on $\K^n$, and $A\in \GL_n(\K)$.
If $A$ is an isometry, then $\|A e_i\|=1$
for the standard basis vector $e_i$ with
$j$-th component $\delta_{ij}$,
and hence $A\in M_n(\bO)$. Likewise,
$A^{-1}\in M_n(\bO)$ and hence $A\in \GL_n(\bO)$.
If $A\in \GL_n(\bO)$, then
$\|A x\|\leq \|x\|$
and $\|x\|=\|A^{-1}Ax\|\leq\|Ax\|$,
whence $\|Ax\|=\|x\|$
and $A$ is an isometry.
\end{proof}
\subsection*{Construction of small open
subgroups}\label{constsmall}
Let $G$ be a Lie group over a local field~$\K$,
and $|.|$ be an absolute value on~$\K$ defining its topology.
Fix an ultrametric norm $\|.\|$ on~$L(G)$
and abbreviate $B_r(x):=B^{L(G)}_r(x)$
for $x\in L(G)$ and $r>0$.
Using a chart $\phi\colon G\supseteq U\to V\sub L(G)$
around~$1$ such that $\phi(1)=0$,
the group multiplication gives rise
to a multiplication $\mu\colon W\times W\to V$,
$\mu(x,y)=x*y$ for an open $0$-neighbourhood
$W\sub V$, via
\[
x*y\; :=\; \phi(\phi^{-1}(x)\phi^{-1}(y))\,.
\]
It is easy to see that the first order Taylor
expansions of multiplication and inversion
in local coordinates read
\begin{equation}\label{Tay01}
x*y\;=\; x+y+\cdots
\end{equation}
and
\begin{equation}\label{Tay02}
x^{-1}\;=\; -x+\cdots
\end{equation}
(compare \cite[p.\,113]{Ser}).
Applying the ultrametric inverse function
theorem with\linebreak
parameters
to the maps
$(x,y)\mto x*y$
and $(x,y)\mto y*x$,
we find $r>0$ with $B_r(0)\sub W$ such that
\begin{equation}\label{premeas}
x*B_s(0)\;=\; x+B_s(0)\;=\; B_s(0)*x
\end{equation}
for all $x\in B_r(0)$ and $s\in \;]0,r]$
(exploiting that all relevant differentials
or partial differentials
are isometries
in view of (\ref{Tay01}) and
(\ref{Tay02})).
In particular, (\ref{premeas}) implies that
$B_s(0)*B_s(0)=B_s(0)$ for each $s\in \;]0,r]$,
and thus also
$y^{-1}\in B_s(0)$ for each $y\in B_s(0)$.\\[2.1mm]
Summing up:
\begin{la}\label{laballs}
$(B_s(0),*)$ is a group
for each $s\in \;]0,r]$ and hence
$\phi^{-1}(B_s(0))$ is a compact
open subgroup of~$G$, for each $s\in \;]0,r]$.
Moreover, $B_s(0)$ is a normal subgroup of $(B_r(0),*)$.\,\Punkt
\end{la}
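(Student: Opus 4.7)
The plan is to extract everything from equation~(\ref{premeas}) and the fact that~$*$ transports the group law of~$G$ through the chart~$\phi$. Throughout, fix $s\in\,]0,r]$ and write $V_s:=B_s(0)$.

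First I would verify that $(V_s,*)$ is a group. \emph{Closure:} since $V_s\sub B_r(0)$, (\ref{premeas}) yields $x*V_s=V_s$ for every $x\in V_s$, so $x*y\in V_s$ for all $x,y\in V_s$. \emph{Associativity:} inherited from~$G$, because $*$ is conjugate via~$\phi$ to the multiplication of~$G$ on the set $\phi^{-1}(V_s)$. \emph{Identity:} $\phi(1)=0\in V_s$, and the Taylor expansion~(\ref{Tay01}) (or directly $\phi^{-1}(0)=1$) gives $0*x=x*0=x$. \emph{Inverses:} since $0\in V_s$ and $y*V_s=V_s$ by~(\ref{premeas}), there is $w\in V_s$ with $y*w=0$; translating back through $\phi$, this $w$ is the two-sided inverse $\phi(\phi^{-1}(y)^{-1})$ of~$y$ in the group law transported from~$G$. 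Thus $(V_s,*)$ is a group.

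Next, I would transfer this to~$G$. The set $V_s$ is open in~$L(G)$ by definition, and closed, since in an ultrametric space every open ball is also closed. As $L(G)$ is finite-dimensional over the local field~$\K$, hence locally compact, the bounded closed set~$V_s$ is compact. Since~$\phi$ is a homeomorphism onto its open image, $\phi^{-1}(V_s)$ is a compact open subset of~$G$. The previous paragraph says it is closed under multiplication and inversion in~$G$, so it is a compact open subgroup.

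Finally, for the normality statement, apply~(\ref{premeas}) in both its forms: for $x\in B_r(0)$ and $y\in V_s$ there exists $y'\in V_s$ with $x*y=y'*x$. Because $(B_r(0),*)$ is a group by the special case $s=r$ already treated, $x$ admits an inverse $x^{-1_*}\in B_r(0)$; multiplying by it on the right yields $x*y*x^{-1_*}=y'\in V_s$. Hence $V_s$ is normal in $(B_r(0),*)$.

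The only mildly delicate step is handling inverses cleanly: one must remember that a priori the chartwise operation~$*$ is only defined on $W\times W$, so the existence of inverses inside $V_s$ must be deduced from $V_s*V_s=V_s$ together with the fact that~$*$ is genuinely the group law of~$G$ read in the chart. Once this is spelled out, the rest is a direct unpacking of~(\ref{premeas}).
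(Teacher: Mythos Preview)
Your proof is correct and follows essentially the same route as the paper: the paper derives $B_s(0)*B_s(0)=B_s(0)$ and $y^{-1}\in B_s(0)$ directly from~(\ref{premeas}) in the paragraph preceding the lemma and then states the lemma as a summary, while you spell out the group axioms, the compactness argument, and the normality conjugation more explicitly. No gap; your version is simply a more detailed unpacking of the same idea.
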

\noindent
Thus small balls in $L(G)$ correspond to
compact open subgroups in~$G$.
These special subgroups are very useful
for many purposes. In particular, we shall
see later that for suitable choices of the norm $\|.\|$,
the groups $\phi^{-1}(B_s(0))$ will be tidy
for a given automorphism~$\alpha$,
as long as~$\alpha$ is well-behaved (exceptional
cases where this goes wrong will be pinpointed as well).
\begin{rem}\label{smidx}
Note that (\ref{premeas}) entails that
the indices of $B_s(0)$ in $(B_r(0),+)$
and $(B_r(0),*)$ coincide (as the cosets
coincide). This observation will be useful later.
\end{rem}
\section{Basic facts concerning
{\boldmath $p$}-adic Lie groups}\label{sec2}
For each local field $\K$ of characteristic~$0$,
the exponential series $\sum_{k=0}^\infty\frac{1}{k!}A^k$
converges on some $0$-neighbourhood
$U\sub M_n(\K)$ and defines an
analytic mapping
$\exp\colon U\to \GL_n(\K)$.
More generally, it can be shown
that every $\K$-analytic Lie group~$G$
admits an exponential function
in the following sense:
\begin{defn}
An analytic map $\exp_G\colon U\to G$
on an
open $0$-neighbour\-hood
$U\sub L(G)$ with $\bO U\sub U$
is called an \emph{exponential
function} if $\exp_G(0)=1$,
$T_0(\exp_G)=\id$
and
\[
\exp_G((s+t)x)\; =\; \exp_G(sx)\exp_G(tx)
\quad \mbox{for all $x\in U$ and $s,t\in \bO$,}
\]
where $\bO:=\{t\in \K\colon |t|\leq 1\}$
is the valuation ring.
\end{defn}
\noindent
Since $T_0(\exp_G)=\id$,
after shrinking~$U$ one can assume that
$\exp_G(U)$ is open and $\exp_G$ is a
diffeomorphism onto its image.
By Lemma~\ref{laballs},
after shrinking~$U$ further if necessary,
we may assume that $\exp_G(U)$
is a subgroup of~$G$.
Hence also~$U$
can be considered as a Lie group.
The Taylor expansion
of multiplication with respect to the
logarithmic chart $\exp_G^{-1}$ is given by the Baker-Campbell-Hausdorff
(BCH-) series
\begin{equation}\label{BCH}
x*y\, = \, x+y+\frac{1}{2}[x,y]+\cdots
\end{equation}
(all terms of which are nested
Lie brackets with rational coefficients),
and hence $x*y$ is given by this series
for small~$U$
(cf.\ Lemma~3 and Theorem~2 in \cite[Chapter~3, \S4, no.\,2]{Bo2}).
In this case, we call
$\exp_G(U)$ a \emph{BCH-subgroup of~$G$.}\\[2.1mm]
For later use, we note that
\begin{equation}\label{newdetai1}
x^n\;=\; x*\cdots * x\; =\; nx
\end{equation}
for all $x\in U$ and $n\in \N_0$
(since $[x,x]=0$ in (\ref{BCH})).
As a consequence, the closed subgroup of $(U,*)$
generated by $x\in U$ is of the form
\begin{equation}\label{newdetai2}
\wb{\langle x\rangle}\; =\; \Z_p\, x\,.
\end{equation}
We also note that
\begin{equation}\label{simplexp}
\exp_U\; :=\; \id_U
\end{equation}
is an exponential map for~$U$.\\[2.1mm]
Next, let us consider homomorphisms between Lie groups.
Recall that if
$\alpha \colon G\to H$ is an analytic homomorphism
between real Lie groups,
then the diagram
\[
\begin{array}{rcl}
G\,  & \stackrel{\alpha}{\longrightarrow} & H\\
\exp_G\uparrow \; & & \; \uparrow \exp_H\\
L(G) &\stackrel{L(\alpha)}{\longrightarrow} & L(H)
\end{array}
\]
commutes (a fact referred to
as the ``naturality of $\exp$'').
If $\alpha\colon G\to H$ is an analytic homomorphism
between Lie groups over a local field~$\K$
of characteristic~$0$,
we can still choose $\exp_G\colon U_G\to G$
and $\exp_H\colon U_H\to H$
with $L(\alpha).U_G \sub U_H$
and
\begin{equation}\label{locnat}
\exp_H\circ L(\alpha)|_{U_G}\; =\; \alpha \circ \exp_G
\end{equation}
(see Proposition~8 in
\cite[Chapter~3, \S4, no.\,4]{Bo2}).\\[2.1mm]
The following fact (see
\cite[Chapter 3, Theorem 1 in \S8, no.\,1]{Bo2})
is essential.
\begin{prop}
Every continuous homomorphism
between $p$-adic Lie groups
is analytic.
\end{prop}
\noindent
As a consequence, there is at most one
$p$-adic Lie group structure on a given
topological group.
Following the general custom, we call
a topological group a $p$-adic Lie group
if it admits a $p$-adic Lie
group structure.\\[2.3mm]
We record standard facts; for the proofs,
the reader is referred to
Proposition~7 in \S1, no.\,4;
Proposition~11 in \S1, no.\,6;
and Theorem~2 in \S8, no.\,2
in \cite[Chapter~3]{Bo2}.
\begin{prop}
Closed subgroups, finite direct products
and Hausdorff
quotient groups of $p$-adic
Lie groups are $p$-adic
Lie groups.\,\Punkt
\end{prop}
\noindent
It is also known that a topological
group which is an extension of a $p$-adic Lie group
by a $p$-adic Lie group is again a $p$-adic
Lie group. In fact,
the case of compact $p$-adic Lie groups
is known in the theory of analytic pro-$p$-groups
(one can combine \cite[Proposition~8.1.1\,(b)]{Wls}
with Proposition~1.11\,(ii)
and Corollary~8.33 from \cite{Dix}).
The general case then is a well-known consequence
(see, e.g., \cite[Lemma~9.1\,(a)]{SIM}).\\[2.1mm]
The following fact is essential for us
(cf.\ Step~1 in the proof of Theorem~3.5 in
\cite{Wan}).
\begin{prop}\label{veryess}
Every $p$-adic Lie group~$G$
has an open subgroup~$U$ which
satisfies the ascending chain condition
on closed subgroups.
\end{prop}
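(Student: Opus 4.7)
I would take $U$ to be a compact open BCH-subgroup of $G$, identified via the logarithmic chart with $(B,*)$, where $B=B_s(0)\sub L(G)$ is a small enough ball; such a $U$ exists by Lemma~\ref{laballs} applied after shrinking the domain of~$\exp_G$. The claim is that this particular $U$ satisfies the ascending chain condition on closed subgroups.

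So let $H_1\sub H_2\sub\cdots$ be an ascending chain of closed subgroups of~$U$. Each $H_k$ is itself a $p$-adic Lie group (by the Proposition on closed subgroups) and thus has a Lie algebra $L(H_k)\sub L(G)$. Since $L(G)$ is finite-dimensional, the ascending chain of $\Q_p$-subspaces $L(H_1)\sub L(H_2)\sub\cdots$ stabilizes at some step $N$: $L(H_k)=\ch$ for all $k\geq N$. The essential local input is that $\log(H)\sub L(H)$ for every closed subgroup $H\sub U$: for $y\in\log(H)$, equation~(\ref{newdetai2}) applied inside $(B,*)$ gives $\Z_p y\sub\log(H)$, so $\{\exp_G(ty):t\in\Z_p\}$ is a closed one-parameter subgroup of~$H$; as $T_0\exp_G=\id$ its tangent at~$1$ is~$y$, forcing $y\in L(H)$.

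Now set $H_\infty:=\overline{\bigcup_{k\geq N}H_k}$, a closed and hence compact subgroup of~$U$. Because $\log$ is a homeomorphism from~$U$ onto~$B$ and $\ch$ is a closed $\Q_p$-subspace of~$L(G)$, the local observation applied to every $H_k$ yields
\[
\log(H_\infty)\;=\;\overline{{\textstyle\bigcup_{k\geq N}}\log(H_k)}\;\sub\;\overline{\ch}\;=\;\ch.
\]
For any $y\in L(H_\infty)$ one has $\exp_G(ty)\in H_\infty$ for small $t\in\Z_p$, so $ty\in\log(H_\infty)\sub\ch$, and hence $y\in\ch$ by the $\Q_p$-stability of~$\ch$. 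Thus $L(H_\infty)\sub\ch$, which combined with $L(H_\infty)\supseteq L(H_N)=\ch$ gives $L(H_\infty)=\ch$. Since $H_N$ and $H_\infty$ then share the same Lie algebra, $H_N$ contains the image under $\exp_G$ of a $0$-neighbourhood in~$\ch$, which is open in~$H_\infty$; hence $H_N$ is open in the compact group~$H_\infty$ and $[H_\infty:H_N]<\infty$. A fortiori $[H_\infty:H_k]<\infty$ for every $k\geq N$, and this non-increasing sequence of positive integers is eventually constant; a strict inclusion $H_k\subsetneq H_{k+1}$ would strictly decrease the index, so $H_k=H_{k+1}$ for all sufficiently large~$k$.

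The main hurdle is the local observation $\log(H)\sub L(H)$, which rests essentially on the BCH-identity~(\ref{newdetai2}) and explains why one cannot hope to take~$U=G$: on an open BCH-subgroup it pins closed subgroups down to their Lie algebras sufficiently tightly that finite-dimensionality of~$L(G)$ forces stabilization, after which the rest is just finite-index bookkeeping through the ambient compact group~$H_\infty$.
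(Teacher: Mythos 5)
Your proof is correct and follows essentially the same route as the paper: take a BCH-subgroup $U$, use (\ref{newdetai2}) (one-parameter subgroups $\Z_p x$) to tie each closed subgroup $H\leq U$ to its Lie algebra, let the Lie algebras stabilize at $\ch$ by finite-dimensionality, and conclude via a finite-index argument in an ambient compact group with Lie algebra $\ch$. The only (inessential) difference is the choice of that ambient group: the paper identifies $L(H)=\Spann_{\Q_p}(H)$ and takes $\ch\cap U$ inside the logarithmic chart, whereas you take the closure $\overline{\bigcup_k H_k}$ and verify its Lie algebra is $\ch$.
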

\begin{proof}
We show that every BCH-subgroup
$U$ has the desired property.
It suffices to discuss $U\sub L(G)$
with the BCH-multiplication.
It is known that the Lie algebra $L(H)$ of a closed
subgroup $H\leq U$ can be identified
with the set of all $x\in L(U)=L(G)$ such that
\[
\exp_U(W x)\,\sub\, H
\]
for some $0$-neighbourhood $W\sub \Q_p$
(see Corollary~1\,(ii) in \cite[Chapter~3, \S4, no.\,4]{Bo2}).
Since $\exp_U=\id_U$ here (see~(\ref{simplexp}))
and $\Z_p x = \wb{\langle x\rangle}\sub H$
for each $x\in H$ (by (\ref{newdetai2})),
we deduce that
\[
L(H)\;=\; \Spann_{\Q_p}(H)
\]
is the linear span of~$H$ in the current situation.
Now consider an ascending series
$H_1\leq H_2\leq\cdots$ of closed
subgroups. We may assume that
each $H_n$ has the same dimension
and thus $\ch:=L(H_1)=L(H_2)=\cdots$
for each~$n$.
Then $H:=\ch\cap U$ is a compact group
in which $H_1$ is open, whence
$[H:H_1]$ is finite and the series
has to stabilize.
\end{proof}
\noindent
We record an important consequence:
\begin{prop}\label{unoclos}
If $G$ is a $p$-adic Lie group
and $H_1\sub H_2\sub\cdots$
an
ascending sequence of closed
subgroups of~$G$, then
also
$H:=\bigcup_{n\in \N}H_n$ is closed in~$G$.
\end{prop}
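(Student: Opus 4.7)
The strategy is to reduce the global closedness question to a local one using Proposition~\ref{veryess}. I would pick an open subgroup $U$ of $G$ that satisfies the ascending chain condition on closed subgroups. Intersecting the given chain with $U$ yields an ascending chain $H_1\cap U \subseteq H_2\cap U\subseteq\cdots$ of closed subgroups of $U$, which must stabilize: there exists $N$ with $H_n\cap U = H_N\cap U$ for all $n\geq N$. Consequently $H\cap U = H_N\cap U$ is closed in $U$; since every open subgroup of a topological group is also closed, $H\cap U$ is in fact closed in $G$.

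It remains to upgrade closedness of $H\cap U$ to closedness of $H$ itself. Note first that $H$ is a subgroup of $G$, being an ascending union of subgroups. I would exploit the fact that $U$, being open-and-closed, partitions $G$ into open-and-closed cosets; moreover, for each $h\in H$ the translate $H\cap hU = h(H\cap U)$ is closed in $G$, because $H$ is a subgroup containing $h$.

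The crux is then the following localization argument. Take $g\in\overline{H}$. Since $gU$ is an open neighbourhood of $g$, we have $gU\cap H\neq\emptyset$, so $gU=hU$ for some $h\in H$. I would verify the identity
\[
\overline{H}\cap hU \;=\; \overline{H\cap hU}\,:
\]
the inclusion ``$\supseteq$'' follows because $hU$ is closed, while ``$\subseteq$'' follows because any net in $H$ converging to a point of the open set $hU$ is eventually inside $hU$. Combined with the observation that $H\cap hU$ is already closed, this gives $g\in H\cap hU\subseteq H$, proving $\overline{H}=H$.

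The only real obstacle is the passage from local to global closedness in the last paragraph; everything else is bookkeeping around Proposition~\ref{veryess}. This step is handled cleanly by exploiting that cosets of the open subgroup $U$ are simultaneously open and closed, so that closedness of $H\cap U$ propagates to closedness of every translate $h(H\cap U)$, and hence to $H$.
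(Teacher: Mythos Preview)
Your proof is correct and follows essentially the same route as the paper: use Proposition~\ref{veryess} to obtain an open subgroup with the ascending chain condition, deduce that $H\cap U$ is closed, and then pass from local to global closedness. The only difference is cosmetic---the paper dispatches the final local-to-global step by citing the standard fact that a locally closed subgroup of a topological group is closed (\cite[Theorem~(5.9)]{HaR}), whereas you spell out this argument explicitly via cosets of~$U$.
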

\begin{proof}
Let $V\sub G$ be an open subgroup
satisfying an ascending chain condition
on closed subgroups.
Then
\[
V\cap H_1\; \sub\;  V\cap H_2\; \sub\; \cdots
\]
is an ascending sequence
of closed subgroups of~$V$ and thus
stabilizes, say at~$V\cap H_m$.
Then
\[
V\cap H\; =\; V\cap H_m
\]
is closed in~$G$. Being locally closed,
the subgroup~$H$ is closed (compare\linebreak
\cite[Theorem (5.9)]{HaR}).
\end{proof}
\noindent
Two important applications are now described.
The second one (Corollary~\ref{corclo})
is part of \cite[Theorem~3.5\,(ii)]{Wan}.
The first application might also be deduced
from the second using \cite[Theorem~3.32]{BaW}.
\begin{numba}\label{rcasca}
We recall from~\cite{Wil} and \cite{Wi2}:
If $G$ is a totally disconnected, locally compact
group and~$\alpha$ an automorphism of~$G$,
then a compact open subgroup $V\sub G$
is called \emph{tidy for $\alpha$}
if it has the following properties:
\begin{description}
\item[TA]
$V=V_+V_-$,
where $V_+:=\bigcap_{n\in\N_0}\alpha^n(V)$
and $V_-:=\bigcap_{n\in\N_0}\alpha^{-n}(V)$; and
\item[TB]
The ascending union
$V_{++}:=\bigcup_{n\in \N_0}\alpha^n(V_+)$
is closed in~$G$.
\end{description}
If $V$ satisfies {\bf TA}, it is also called \emph{tidy above}.
If $V$ satisfies {\bf TB}, it is also called \emph{tidy below}.
\end{numba}
%
%
\begin{cor}\label{autotb}
Let $G$ be a $p$-adic Lie group,
$\alpha\colon G\to G$ be an
automorphism
and $V\sub G$ be a compact open subgroup.
Then~$V$ is tidy below.
\end{cor}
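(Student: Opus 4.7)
The plan is to reduce condition \textbf{TB} immediately to Proposition~\ref{unoclos}. By definition, $V_{++}=\bigcup_{n\in\N_0}\alpha^n(V_+)$, so to apply Proposition~\ref{unoclos} it suffices to show that this is an ascending union of \emph{closed subgroups} of~$G$.

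First, I would verify that $V_+$ itself is a closed subgroup of~$G$. Since $V$ is a compact open subgroup and $\alpha$ is a (bicontinuous) automorphism, each $\alpha^n(V)$ is likewise a compact open subgroup; the intersection $V_+=\bigcap_{n\in\N_0}\alpha^n(V)$ is therefore a closed subgroup (in fact compact). Applying the homeomorphism $\alpha^n$, each $\alpha^n(V_+)$ is a closed (compact) subgroup of~$G$ as well.

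Next, I would check the ascending property. Since $V_+\sub V=\alpha^0(V)$, we have
\[
\alpha(V_+)\;=\;\bigcap_{n\in\N_0}\alpha^{n+1}(V)\;=\;\bigcap_{n\in\N}\alpha^n(V)\;\supseteq\;\bigcap_{n\in\N_0}\alpha^n(V)\;=\;V_+,
\]
and applying $\alpha^n$ on both sides of $V_+\sub\alpha(V_+)$ yields $\alpha^n(V_+)\sub\alpha^{n+1}(V_+)$ for all $n\in\N_0$. Hence $V_+\sub\alpha(V_+)\sub\alpha^2(V_+)\sub\cdots$ is an ascending chain of closed subgroups of the $p$-adic Lie group~$G$.

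By Proposition~\ref{unoclos}, the union $V_{++}=\bigcup_{n\in\N_0}\alpha^n(V_+)$ is closed in~$G$, which is precisely condition \textbf{TB}. There is no real obstacle here: the work is entirely carried by Proposition~\ref{unoclos} (and, behind it, by Proposition~\ref{veryess} on the ascending chain condition in a BCH-subgroup); the proof only has to unwind the definitions and invoke that result.
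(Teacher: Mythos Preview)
Your proof is correct and follows exactly the same route as the paper's own proof: recognize $V_{++}$ as an ascending union of closed subgroups and invoke Proposition~\ref{unoclos}. The paper compresses this into a single sentence, while you have spelled out the (entirely routine) verifications that $V_+$ and its $\alpha$-images are closed and that the union is ascending.
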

\begin{proof}
The subgroup $\bigcup_{n\in \N_0}\alpha^n(V_+)$
is an ascending union of closed
subgroups of~$G$ and hence closed, by Proposition~\ref{unoclos}.
\end{proof}
\noindent
The second application
of Proposition~\ref{unoclos}
concerns contraction groups.
\begin{defn}\label{maindefcon}
Given a topological group~$G$
and automorphism $\alpha\colon G\to G$,
we define the \emph{contraction group}\footnote{Some authors
may prefer to call~$\alpha$ the contraction \emph{subgroup} of~$\alpha$.
Another recent notation for $U_\alpha$ is $\mbox{con}(\alpha)$.}
\emph{of $\alpha$} via
\begin{equation}\label{defU}
U_\alpha\, :=\, \{x\in G\colon
\mbox{$\alpha^n(x)\to 1\,$ as $\, n\to\infty$}\}\,.
\end{equation}
\end{defn}
\begin{cor}\label{corclo}
Let $G$ be a $p$-adic Lie group.
Then the contraction group $U_\alpha$ is closed in~$G$,
for each automorphism
$\alpha\colon G\to G$.
\end{cor}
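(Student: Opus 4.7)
The plan is to exhibit $U_\alpha$ as a countable intersection of sets, each of which is a closed ascending union of closed subgroups and hence closed by Proposition~\ref{unoclos}. First, I would fix a descending countable neighbourhood basis $(V_k)_{k\in\N}$ of the identity in~$G$ consisting of compact open subgroups; applying Lemma~\ref{laballs} in a chart around~$1$, the preimages $\phi^{-1}(B_{s_k}(0))$ with $s_k\downarrow 0$ furnish such a basis.

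Next, for each $k$ I would introduce
\[
V_k^-\;:=\;\bigcap_{n\ge 0}\alpha^{-n}(V_k)\,,
\]
which is a closed subgroup of~$G$ as an intersection of closed subgroups. The key observation is that $x\in U_\alpha$ holds precisely when for every $k$ there is some $N$ with $\alpha^n(x)\in V_k$ for all $n\ge N$, which rewrites as $\alpha^N(x)\in V_k^-$, i.e., $x\in \alpha^{-N}(V_k^-)$. This yields the identification $U_\alpha=\bigcap_{k\in\N}\bigcup_{N\ge 0}\alpha^{-N}(V_k^-)$.

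To finish, I would apply Proposition~\ref{unoclos} to each inner union. The crucial point that enables this is that $\alpha(V_k^-)\sub V_k^-$ (an element of $V_k^-$ trivially remains in $V_k^-$ after one application of~$\alpha$), which implies $\alpha^{-N}(V_k^-)\sub \alpha^{-(N+1)}(V_k^-)$. Thus the closed subgroups $\alpha^{-N}(V_k^-)$ ascend in~$N$, Proposition~\ref{unoclos} delivers closedness of their union, and intersecting over~$k$ preserves closedness, giving the result.

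The argument carries no serious obstacle: it is essentially a bookkeeping consequence of Proposition~\ref{unoclos} together with the existence of a countable neighbourhood basis of~$1$ by compact open subgroups. The only point deserving care is the direction of $\alpha$-invariance of $V_k^-$ (forward, not backward), so that it is the \emph{preimages} $\alpha^{-N}(V_k^-)$ rather than the images that form the ascending chain to which Proposition~\ref{unoclos} can be applied.
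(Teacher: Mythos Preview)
Your argument is correct and is essentially identical to the paper's own proof: the paper writes $U_\alpha=\bigcap_{n}\bigcup_{m}\bigcap_{k\geq m}\alpha^{-k}(V_n)$ and observes that the inner union is an ascending union of closed subgroups, and your $\alpha^{-N}(V_k^-)$ is precisely $\bigcap_{j\geq N}\alpha^{-j}(V_k)$, so the two formulations coincide. The only cosmetic difference is that you package the inner intersection as $V_k^-$ (which is the subgroup $(V_k)_-$ from the tidiness discussion) and phrase the ascending property via $\alpha(V_k^-)\subseteq V_k^-$, whereas the paper leaves the intersection written out.
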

\begin{proof}
Let $V_1\supseteq V_2\supseteq\cdots$ be a sequence
of compact open subgroups of~$G$ which
form a basis of identity neighbourhoods
(cf.\ Lemma~\ref{laballs}).
Then an element $x\in G$ belongs to $U_\alpha$
if and only if
\[
(\forall n\in \N)\, (\exists m\in \N)\,(\forall k\geq m)\;\;
\alpha^k(x)\in V_n\,.
\]
Since
$\alpha^k(x)\in V_n$ if and only if $x\in \alpha^{-k}(V_n)$,
we deduce that
\[
U_\alpha\; =\; \bigcap_{n\in\N}\bigcup_{m\in \N}\bigcap_{k\geq m}
\alpha^{-k}(V_n)\,.
\]
Note that $W_n:=\bigcup_{m\in \N}\bigcap_{k\geq m}
\alpha^{-k}(V_n)$ is an ascending union of closed subgroups
of~$G$ and hence closed, by Proposition~\ref{unoclos}.
Consequently, $U_\alpha=\bigcap_{n\in \N}\, W_n$
is closed.
\end{proof}
\begin{rem}
We mention that
contraction groups of the form~$U_\alpha$
arise in many contexts:
In representation theory
in connection with the Mautner phenomenon
(see \cite[Chapter~II, Lemma~3.2]{Mar}
and (for the $p$-adic case) \cite{Wan});
in probability theory
on groups (see \cite{HaS}, \cite{Sie}, \cite{Si2}
and (for the $p$-adic case)~\cite{DaS});
and in the structure theory
of totally disconnected, locally compact groups~\cite{BaW}.
\end{rem}
\section{Lazard's characterization of {\boldmath$p$}-adic
groups}\label{sec3}
Lazard~\cite{Laz} obtained various
characterizations of $p$-adic Lie groups
within the class of locally compact groups,
and many more have been found since
in the theory of analytic pro-$p$-groups
(see \cite[pp.\,97--98]{Dix}).
These characterizations (and the underlying theory
of analytic pro-$p$-groups)
are of great value
for the structure theory of totally
disconnected groups.
For example, they explain the occurrence
of $p$-adic Lie groups in
the areas of $\Z^n$-actions
and contractive automorphisms
mentioned in the introduction.
We recall one of Lazard's characterizations
in a form recorded in \cite[p.\,157]{Ser}:
\begin{thm}
A topological group~$G$ is a $p$-adic
Lie group if and only if it has an open
subgroup~$U$ with the following
properties:
\begin{itemize}
\item[\rm(a)]
$U$ is a pro-$p$-group;
\item[\rm(b)]
$U$ is topologically finitely generated,
i.e., $U=\wb{\langle F\rangle}$
for a finite subset $F\sub U$;
\item[\rm(c)]
$[U,U]\, :=\, \langle xyx^{-1}y^{-1}\colon x,y\in U\rangle
\, \sub \, \{x^{p^2}\colon x\in U\}$.
\end{itemize}
\end{thm}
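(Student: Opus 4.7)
The plan is to treat the two implications separately; the ``only if'' direction is a direct calculation in BCH-coordinates, while the ``if'' direction is the substance of Lazard's theorem, of which I can only outline the mechanism.

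For the ``only if'' direction, inside a given $p$-adic Lie group~$G$ I would produce the open subgroup~$U$ as a BCH-subgroup: fix an exponential chart identifying a neighbourhood of~$1$ with a $0$-neighbourhood in $L(G)\cong\Q_p^n$ equipped with an ultrametric norm, and set $U:=B_s(0)$ carrying the BCH-multiplication~$*$, for $s$ sufficiently small (cf.\ Lemma~\ref{laballs}). For~(a), the sub-balls $B_{p^{-m}s}(0)$ with $m\geq 0$ are normal subgroups of~$U$ (Lemma~\ref{laballs}), form a basis of identity neighbourhoods, and have indices $[U:B_{p^{-m}s}(0)]=p^{mn}$ by Remark~\ref{smidx} combined with~(\ref{goodcf}); thus $U$ is a pro-$p$-group. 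For~(c), the BCH-expansion~(\ref{BCH}) gives $\|x*y*x^{-1}*y^{-1}\|\leq C\|x\|\|y\|\leq Cs^2$ for some constant~$C$ and all $x,y\in U$, so by choosing $s$ with $Cs\leq p^{-2}$, every such commutator lies in $B_{p^{-2}s}(0)=p^2U$, which by~(\ref{newdetai1}) coincides with $\{z^{p^2}:z\in U\}$ and is a subgroup (Lemma~\ref{laballs}); hence $[U,U]\subseteq\{z^{p^2}:z\in U\}$. For~(b), I would invoke the pro-$p$ Burnside basis theorem: by~(\ref{newdetai1}) the subgroup of $p$-th powers is $U^p=pU=B_{s/p}(0)$ with index~$p^n$, and since $[U,U]\subseteq p^2U\subseteq pU$ by what has just been shown, the Frattini subgroup $\Phi(U)=\overline{U^p[U,U]}$ equals $pU$, giving $U/\Phi(U)\cong\F_p^n$; hence $U$ is topologically generated by any $n$ elements lifting an $\F_p$-basis of this quotient.

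For the ``if'' direction, which cannot be reproved in a short sketch, the idea is that conditions~(a)--(c) force~$U$ to be a \emph{uniform pro-$p$-group} in the sense of~\cite{Dix}; on any such group one constructs a $\Z_p$-Lie-algebra structure on the underlying set via
\[
x+y\;:=\;\lim_{n\to\infty}\bigl(x^{p^n}y^{p^n}\bigr)^{p^{-n}},\qquad [x,y]_L\;:=\;\lim_{n\to\infty}\bigl[x^{p^n},y^{p^n}\bigr]^{p^{-2n}},
\]
the $p^n$-th roots existing uniquely thanks to the strong commutator control supplied by~(c). After tensoring with~$\Q_p$, the BCH-series reconstructs the original multiplication from this Lie algebra, equipping~$U$ (and hence~$G$) with a compatible $\Q_p$-analytic manifold structure turning multiplication and inversion into analytic maps.

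The main obstacle is unambiguously the ``if'' direction: justifying the existence and uniqueness of the $p^n$-th roots, the convergence of the limits, and the Lie-algebra axioms requires the full theory of the lower $p$-series of a powerful pro-$p$-group together with delicate commutator estimates, which constitutes the technical core of~\cite{Laz} (or, in modern packaging, \cite[Part~II]{Dix}). In an expository account one would defer these details to those sources and only transport the resulting analytic structure back to~$G$.
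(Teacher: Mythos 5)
Your proposal is correct, and it matches the paper's overall division of labour exactly: the necessity of (a)--(c) is argued directly in BCH-coordinates, while the sufficiency is deferred to Lazard (the paper gives no proof of that direction at all, only the citation, so your uniform-pro-$p$ sketch is already more than the text offers). Within the necessity direction your route diverges in two places. For (a) you argue as the paper does, via (\ref{goodcf}) and Remark~\ref{smidx}. For (c) the paper invokes the second-order Taylor expansion of the commutator map together with the inverse function theorem applied to $x\mto x^{p^2}$ to identify the set of $p^2$-powers; you instead use the exact identity $x^{p^2}=p^2x$ from (\ref{newdetai1}), so that $\{x^{p^2}\colon x\in U\}=p^2B_s(0)=B_{p^{-2}s}(0)$ on the nose --- this is cleaner, since it bypasses the inverse function theorem and makes the target visibly a subgroup, so that containment of the generating commutators suffices. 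For (b) the difference is more substantial: the paper uses ``coordinates of the second kind,'' i.e.\ the map $(t_1,\ldots,t_d)\mto\exp_G(t_1x_1)\cdots\exp_G(t_dx_d)$, which is a diffeomorphism of $\Z_p^d$ onto $U$ after rescaling the $x_j$, whence $U=\wb{\langle\exp_G(x_1),\ldots,\exp_G(x_d)\rangle}$ by (\ref{newdetai2}); you instead compute the Frattini quotient $U/\Phi(U)\isom\F_p^{\,n}$ and quote the topological Burnside basis theorem. Both are valid and yield the same number of generators; the paper's argument exhibits explicit generators and stays inside Lie theory, while yours leans on standard pro-$p$ machinery from \cite{Dix} and fits naturally after (a) and (c) have been established.
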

\noindent
Although the proof of the sufficiency of
conditions (a)--(c) is non-trivial,
their\linebreak
necessity is clear.
In fact, (a) is immediate from
(\ref{goodcf}) and
Remark~\ref{smidx},
while~(c) can easily be proved
using the second order Taylor
expansion of the commutator map
and the inverse function theorem (applied to the
map $x\mto x^{p^2}$).
To obtain~(b),
one picks an exponential map
$\exp_G\colon V\to G$ as well as
a basis $x_1,\ldots, x_d\in V$ of $L(G)$,
and notes that the analytic map
\[
\phi\colon (\Z_p)^d\to G\,, \quad (t_1,\ldots, t_d)
\mto \exp_G(t_1x_1)\cdot \ldots\cdot \exp_G(t_dx_d)
\]
has an invertible differential
at the origin (the map $(t_1,\ldots, t_d)\mto
\sum_{j=1}^d t_jx_j$),
and hence restricts to
a diffeomorphism from $p^n\Z_p^d$ (for some $n\in \N_0$) 
onto an open identity neighbourhood~$U$,
which can be chosen as subgroup of~$G$
(by Lemma~\ref{laballs}).
Replacing the elements $x_j$ with $p^nx_j$,
we can always achieve that
$\phi$
is a diffeomorphism from $\Z_p^d$
onto a compact open subgroup~$U$ of~$G$
(occasionally, one speaks of
``coordinates of the second kind'' in this situation).
Then
\[
\wb{\langle \exp_G(x_1),\ldots,\exp_G(x_d)\rangle}\;=\; U\,,
\]
establishing~(b).
\section{Iteration of linear automorphisms}\label{secitlin}
A good understanding of linear automorphisms
of vector spaces over
local fields is essential
for an understanding of automorphisms
of general Lie groups.
\subsection*{Decomposition of {\boldmath$E$}
and adapted norms}\label{adnm}
For our first consideration,
we let $(\K,|.|)$ be a complete ultrametric
field,
$E$ be a finite-dimensional $\K$-vector
space, and $\alpha\colon E\to E$
be a linear automorphism.
We let $\wb{\K}$ be an algebraic closure of~$\K$,
and use the same symbol,
$|.|$, for the unique extension
of the given absolute value to~$\wb{\K}$
(see \cite[Theorem~16.1]{Sch}).
We let $R(\alpha)$ be the set
of all absolute values $|\lambda|$,
where $\lambda\in \wb{\K}$
is an eigenvalue
of the automorphism $\alpha_{\wb{\K}}:=\alpha\otimes \id_{\wb{\K}}$
of $E_{\wb{\K}}:=E\tensor_\K \wb{\K}$
obtained by extension of scalars.
For each $\lambda\in \wb{\K}$,
we let
\[
(E_{\wb{\K}})_{(\lambda)}\, :=\, \{ x\in E_{\wb{\K}}\!:
(\alpha_{\wb{\K}}-\lambda)^dx=0\}
\]
be the generalized eigenspace of $\alpha_{\wb{\K}}$
in $E_{\wb{\K}}$ corresponding to~$\lambda$
(where $d$ is the dimension of the $\K$-vector space~$E$).
Given $\rho\in R(\alpha)$,
we define
\begin{equation}\label{dfspacerho}
(E_{\wb{\K}})_\rho\; :=\;
\bigoplus_{|\lambda|=\rho} (E_{\wb{\K}})_{(\lambda)}\,\sub E_{\wb{\K}}
\, ,\vspace{-.7mm}
\end{equation}
where the sum is taken over all
$\lambda\in \wb{\K}$
such that $|\lambda|=\rho$. As usual, we identify $E$
with $E \otimes 1\sub E_{\wb{\K}}$.\\[2.1mm]
The following fact (cf.\ (1.0) on p.\,81 in \cite[Chapter~II]{Mar})
is essential:\footnote{In \cite[p.\,81]{Mar},
$\K$ is a local field,
but the proof works also for complete ultrametric fields.}
\begin{la}\label{lemschlem}
For each $\rho\in R(\alpha)$,
the vector
subspace $(E_{\wb{\K}})_\rho$ of $E_{\wb{\K}}$ is defined
over~$\K$, i.e.,
$(E_{\wb{\K}})_\rho= (E_\rho)_{\wb{\K}}$,
where $E_\rho:=(E_{\wb{\K}})_\rho\cap E$.
Thus
\begin{equation}\label{isdsum}
E\; =\; \bigoplus_{\rho\in R(\alpha)} E_\rho\,,
\end{equation}
and each $E_\rho$
is an $\alpha$-invariant vector subspace of~$E$.\,\Punkt
\end{la}
\noindent
It is useful for us that certain well-behaved norms
exist on~$E$ (cf.\ \cite[Lemma~3.3]{SCA}
and its proof for the $p$-adic case).
\begin{defn}
A norm $\|.\|$
on $E$ is \emph{adapted to $\alpha$} if the following holds:
\begin{itemize}
\item[{\bf A1}]
$\|.\|$ is ultrametric;
\item[{\bf A2}]
$\big\|\sum_{\rho\in R(\alpha)} x_\rho\big\|=\max\{\|x_\rho\|
\colon \rho\in R(\alpha)\}$\vspace{.5mm}
for each $(x_\rho)_{\rho\in R(\alpha)}
\in \prod_{\rho\in R(\alpha)} E_\rho$; and
\item[{\bf A3}]
$\|\alpha (x)\|=\rho\|x\|$ for each $\rho\in R(\alpha)$
and $x\in E_\rho$.
\end{itemize}
\end{defn}
\begin{prop}\label{propadapt}
Let $E$ be a finite-dimensional vector space over
a complete ultrametric field $(\K,|.|)$ and
$\alpha\colon E\to E$ be a linear automorphism.
Then $E$ admits a norm $\|.\|$ adapted to~$\alpha$.
\end{prop}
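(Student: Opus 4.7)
The approach is to exploit the decomposition (\ref{isdsum}) to reduce to the one-slope case $R(\alpha) = \{\rho\}$, and then to build an adapted norm on each $E_\rho$ by passing to a finite extension that splits the characteristic polynomial of $\alpha|_{E_\rho}$, performing a Jordan-type rescaling of the basis, and restricting back to $\K$.

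First, suppose that for each $\rho \in R(\alpha)$ an ultrametric norm $\|\cdot\|_\rho$ on $E_\rho$ with $\|\alpha(x)\|_\rho = \rho\|x\|_\rho$ has been constructed. Using (\ref{isdsum}), define $\|\sum_\rho x_\rho\| := \max_\rho \|x_\rho\|_\rho$; this is ultrametric (\textbf{A1}), satisfies \textbf{A2} by construction, and inherits \textbf{A3} component-wise since $\alpha$ preserves each $E_\rho$. Thus it suffices to treat a single $E_\rho$, i.e., I may assume $R(\alpha) = \{\rho\}$.

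Next, choose a finite extension $\bL \supseteq \K$ containing all eigenvalues of $\alpha$; since $\K$ is complete ultrametric, $\bL$ is complete and $|\cdot|$ extends uniquely. Over $\bL$, upper-triangularize $\alpha_\bL := \alpha \otimes \id_\bL$ in some basis $v_1, \ldots, v_d$ of $E_\bL := E \otimes_\K \bL$, obtaining
\[
\alpha_\bL(v_j) \;=\; \lambda_j v_j + \sum_{i<j} a_{ij} v_i,\qquad |\lambda_j| = \rho.
\]
Pick $\epsilon \in \K^\times$ with $|\epsilon|$ small enough that $|\epsilon|^{j-i}|a_{ij}| < \rho$ whenever $i < j$ (possible because the topology of $\K$ is non-discrete), and rescale $w_j := \epsilon^j v_j$. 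In the basis $(w_j)$ the matrix $(b_{ij})$ of $\alpha_\bL$ remains upper triangular with diagonal entries $b_{jj} = \lambda_j$ of absolute value $\rho$, but now every strict upper entry satisfies $|b_{ij}| < \rho$.

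Equip $E_\bL$ with the sup norm $\|\sum_i x_i w_i\|_\bL := \max_i |x_i|$, which is ultrametric. For nonzero $x = \sum_i x_i w_i$, let $i_0$ be the largest index with $|x_{i_0}| = \|x\|_\bL$; the $i_0$-th coordinate of $\alpha_\bL(x)$ is $\lambda_{i_0} x_{i_0} + \sum_{j > i_0} b_{i_0,j} x_j$, in which the leading summand has absolute value $\rho\|x\|_\bL$, while every later summand satisfies $|b_{i_0,j} x_j| < \rho |x_j| < \rho\|x\|_\bL$ (using $j > i_0$ together with the choice of $i_0$). By (\ref{winner}) this coordinate has absolute value exactly $\rho\|x\|_\bL$; combined with the obvious upper bound $\|\alpha_\bL(x)\|_\bL \leq \rho\|x\|_\bL$, this forces $\|\alpha_\bL(x)\|_\bL = \rho\|x\|_\bL$. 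Restricting $\|\cdot\|_\bL$ along the embedding $E \hookrightarrow E_\bL$, $x \mapsto x \otimes 1$, then yields an ultrametric $\K$-norm on $E$ satisfying \textbf{A3}; \textbf{A2} is vacuous since $R(\alpha) = \{\rho\}$, and this completes the construction. The main obstacle is the rescaling step: it converts a bare upper-triangular form into an \emph{almost-diagonal} form with strict upper entries below $\rho$ in absolute value, which is exactly what makes the asymmetric choice of $i_0$ together with ``the strongest wins'' give equality rather than a mere inequality in \textbf{A3}. The descent from $\bL$ back to $\K$ is essentially free, since ultrametricity and the \textbf{A3}-equality pass transparently through the $\K$-linear embedding $E \hookrightarrow E_\bL$.
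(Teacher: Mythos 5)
Your proof is correct, and it follows the paper's overall strategy --- decompose $E$ as $\bigoplus_{\rho} E_\rho$ via (\ref{isdsum}), build a norm on each summand satisfying \textbf{A3}, and assemble them by taking maxima --- but the construction on a single $E_\rho$ is genuinely different in one respect. The paper's Lemma~\ref{normrho} puts $\alpha_{\wb{\K}}$ into Jordan canonical form and then argues in three cases: for $\rho=1$ it invokes Proposition~\ref{propisos} to see that the Jordan matrix lies in $\GL_m(\wb{\bO})$ and is therefore an isometry; for $\rho>1$ it uses ``the strongest wins,'' since the off-diagonal Jordan entries have absolute value at most $1<\rho$; and for $\rho<1$ it reduces to the previous case by passing to $\alpha^{-1}$. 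Your $\epsilon$-rescaling of an arbitrary upper-triangular basis forces every strict upper entry to have absolute value $<\rho$ for \emph{every} value of $\rho$, so a single ``strongest wins'' computation covers all cases uniformly, with no appeal to the isometry proposition, no inversion trick, and no need for the full Jordan form. The price is the (correctly verified) bookkeeping that conjugating by $\mathrm{diag}(\epsilon,\epsilon^2,\ldots,\epsilon^d)$ multiplies the $(i,j)$ entry by $\epsilon^{j-i}$, which the non-discreteness of $|\K^\times|$ lets you make as small as you wish. One cosmetic remark: taking $i_0$ to be the \emph{largest} index attaining the maximum is unnecessary --- any index with $|x_{i_0}|=\|x\|_\bL$ works, since $|b_{i_0,j}x_j|\leq |b_{i_0,j}|\,\|x\|_\bL<\rho\|x\|_\bL$ already follows from $|x_j|\leq\|x\|_\bL$ and $|b_{i_0,j}|<\rho$.
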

\noindent
This follows from the next lemma.
\begin{la}\label{normrho}
For each $\rho\in R(\alpha)$,
there exists an ultrametric norm
$\|.\|_\rho$ on $E_\rho$ such that $\|\alpha(x)\|_\rho=\rho\|x\|_\rho$
for each $x\in E_\rho$.
\end{la}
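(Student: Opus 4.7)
The plan is to construct the desired norm first on the scalar extension $(E_\rho)_{\wb{\K}}=E_\rho\otimes_\K\wb{\K}$, where a Jordan-type triangularization of $\alpha_{\wb{\K}}$ is available, and then to pull the result back along the natural inclusion $x\mto x\otimes 1$. The engine of the argument is a rescaling trick that, combined with the non-archimedean ``strongest wins'' principle~(\ref{winner}), turns an upper triangular form of $\alpha_{\wb{\K}}$ into an exact $\rho$-scaled isometry for the maximum norm in a suitable basis.

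First I would pass to $\wb{\K}$. On $(E_\rho)_{\wb{\K}}$ the map $\alpha_{\wb{\K}}$ has all eigenvalues of absolute value exactly~$\rho$, by the definition of $E_\rho$ in~(\ref{dfspacerho}); since $\wb{\K}$ is algebraically closed I may pick a basis $v_1,\ldots,v_n$ putting $\alpha_{\wb{\K}}$ into upper triangular form,
\[
\alpha_{\wb{\K}}(v_i)\; =\; \lambda_i\, v_i + \sum_{j<i} c_{ji}\, v_j
\qquad \mbox{with $|\lambda_i|=\rho$.}
\]
Next, I would shrink the off-diagonal contributions by a rescaling: pick $t\in\wb{\K}$ with $0<|t|\leq 1$ small enough that $|c_{ji}|\cdot|t|^{i-j}<\rho$ for all $j<i$ (finitely many constraints), and set $w_i:=t^i v_i$. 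A direct computation yields
\[
\alpha_{\wb{\K}}(w_i)\; =\; \lambda_i\, w_i + \sum_{j<i} c_{ji}\, t^{i-j}\, w_j\, ,
\]
so the diagonal entries remain $\lambda_i$ while every off-diagonal entry now has absolute value strictly less than~$\rho$.

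Now endow $(E_\rho)_{\wb{\K}}$ with the maximum norm in the basis $(w_i)$, namely $\bigl\|\sum_i a_i w_i\bigr\|:=\max_i|a_i|$, which is ultrametric. The core step is to verify $\|\alpha_{\wb{\K}}(x)\|=\rho\|x\|$ for every~$x$. Writing $x=\sum_i a_iw_i$ and fixing an index $i_0$ at which $|a_{i_0}|=\|x\|$, the $i_0$-th coordinate of $\alpha_{\wb{\K}}(x)$ equals $\lambda_{i_0}a_{i_0}+\sum_{i>i_0} c_{i_0 i}\, t^{i-i_0}a_i$; the first summand has absolute value $\rho\|x\|$, while each remaining summand has absolute value strictly less than $\rho\|x\|$ by the choice of~$t$, so by~(\ref{winner}) this coordinate has absolute value exactly $\rho\|x\|$. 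An analogous estimate shows that every other coordinate is bounded above by $\rho\|x\|$, yielding the desired identity. I expect this verification to be the main technical step, as it is precisely where the ultrametric inequality is essential; the archimedean analogue would force the far more delicate Lyapunov-norm approach via averaging of iterates.

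Finally, I would pull the norm back to $E_\rho$ by setting $\|x\|_\rho:=\|x\otimes 1\|$. Since the inclusion $E_\rho\hookrightarrow (E_\rho)_{\wb{\K}}$ is injective, $\K$-linear, and intertwines $\alpha$ with $\alpha_{\wb{\K}}$, the axioms of an ultrametric $\K$-norm and the scaling identity $\|\alpha(x)\|_\rho=\rho\|x\|_\rho$ all descend immediately from the corresponding properties on $(E_\rho)_{\wb{\K}}$, completing the construction.
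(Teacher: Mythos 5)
Your proof is correct, and its skeleton---pass to the algebraic closure, triangularize $\alpha_{\wb{\K}}$ on $(E_{\wb{\K}})_\rho$, take the maximum norm in a well-chosen basis, verify the scaling identity coordinatewise via the ``strongest wins'' principle~(\ref{winner}), and restrict the norm back to $E_\rho$ along $x\mto x\otimes 1$---is the same as the paper's. The genuine difference lies in how the off-diagonal terms are tamed. The paper takes a Jordan basis, so the off-diagonal entries are $0$ or $1$, and this forces a case split: for $\rho>1$ these entries are automatically dominated by the diagonal and (\ref{winner}) applies; for $\rho=1$ they are \emph{not} strictly dominated, and the paper instead observes that the whole Jordan matrix lies in $\GL_m(\wb{\bO})$ and invokes Proposition~\ref{propisos} to conclude it is an isometry; and for $\rho<1$ the paper passes to $\alpha^{-1}$, for which $\rho^{-1}>1$. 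Your rescaling $w_i:=t^iv_i$ with $|t|$ small replaces all of this: it forces every off-diagonal entry to have absolute value strictly below $\rho$ regardless of how $\rho$ compares to~$1$, so a single application of the coordinatewise ``strongest wins'' computation covers all cases uniformly, with no inversion trick and no appeal to Proposition~\ref{propisos}. This is a mild but real simplification (your explicit verification is also essentially the argument the paper leaves implicit in its $\rho>1$ case), and it costs nothing that is used later.
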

\begin{proof}
Assume $\rho\geq 1$ first.
We choose a $\wb{\K}$-basis $w_1,\ldots, w_m$ of $(E_{\wb{\K}})_\rho=:V$
such that the matrix $A_\rho$ of $\alpha_{\wb{\K}}|_V^V\colon V\to V$
with
respect to this basis has Jordan canonical
form. We let $\|.\|$ be the maximum norm
on $V$ with respect to this basis.\\[2.1mm]
If $\rho=1$, then $A_\rho\in \GL_m(\wb{\bO})$, where
$\wb{\bO}=\{t\in \wb{\K}\colon |t|\leq 1\}$
is the valuation ring of $\wb{\K}$, and hence
$\alpha_{\wb{\K}}|_V^V$
is an isometry with respect to~$\|.\|$
(by Proposition~\ref{propisos}).
Thus $\|\alpha_{\wb{\K}}(x)\|=\|x\|=\rho\|x\|$ for all $x\in V$.\\[2.1mm]
If $\rho>1$, we note that
for each $k\in\{1,\ldots, m\}$, there exists an
eigenvalue $\mu_k$
such that $w_k\in (E_{\wb{\K}})_{(\mu_k)}$.
Then $\alpha_{\wb{\K}}(w_k)=\mu_k w_k+(\alpha_{\wb{\K}}(w_k)-\mu_k w_k)$
for each~$k$,
with $\|\mu_k w_k\|=\rho >1\geq\|\alpha_{\wb{\K}}(w_k)-\mu_k w_k\|$.
As a consequence,
$\|\alpha_{\wb{\K}}(x)\|=\rho \|x\|$ for each $x\in V $,
using~(\ref{winner}).\\[2.1mm]
In either of the preceding cases, we define $\|.\|_\rho$
as the restriction of $\|.\|$ to $E_\rho$.
To complete the proof, assume $\rho<1$ now.
Then $E_\rho =E_{\rho^{-1}} (\alpha^{-1})$,
where $\rho^{-1}>1$.
Thus, by what has already been shown, there exists
an ultrametric norm $\|.\|_\rho$ on $E_\rho$
such that $\|\alpha^{-1}(x)\|_\rho=\rho^{-1}\|x\|_\rho$ for each $x\in E_\rho$.
Then $\|\alpha(x)\|_\rho=\rho\|x\|_\rho$ for each $x\in E_\rho$, as required.
\end{proof}
\noindent
{\bf Proof of Proposition~\ref{propadapt}.}
For each $\rho\in R(\alpha)$, we choose a norm $\|.\|_\rho$
on $E_\rho$ as described in Lemma~\ref{normrho}. Then
\[
\Big\| \sum_{\rho\in R(\alpha)} x_\rho\Big\| \; :=
\; \max\, \big\{ \,\|x_\rho\|_\rho\colon \rho\in R(\alpha)\,\big\}\quad
\mbox{for $\,(x_\rho)_{\rho\in R(\alpha)} \in \prod_{\rho\in R(\alpha)}
E_\rho$}
\]
defines a norm $\|.\|\colon E\to[0,\infty[$
which, by construction, is adapted to~$\alpha$.\,\Punkt
\subsection*{Contraction group
and Levi factor for a linear automorphism}\label{cotlevlin}
For $\alpha$ a linear automorphism of a finite-dimensional
vector space~$E$ over a local field~$\K$,
we now determine the contraction group
(as introduced in Definition~\ref{maindefcon})
and the associated Levi factor,
defined as follows:
\begin{defn}\label{basecontr}
Let $G$ be a topological group
and $\alpha\colon G\to G$ be an automorphism.
Following \cite{BaW}, we define
the \emph{Levi factor} $M_\alpha$
as the set of all $x\in G$ such that
the two-sided orbit $\alpha^\Z(x)$
is relatively compact in~$G$.
\end{defn}
\noindent
It is clear that both $U_\alpha$ and $M_\alpha$
are subgroups. If $G$ is locally compact
and totally disconnected,
then $M_\alpha$ is closed,
as can be shown using
tidy subgroups as a tool
(see \cite[p.\,224]{BaW}).
\begin{prop}\label{linearcase}
Let $\alpha$ be a linear automorphism
of a finite-dimensional
$\K$-vector space~$E$.
Then
\[
U_\alpha\, = \, \bigoplus_{\stackrel{{\scriptstyle\rho\in R(\alpha)}}{\rho<1}}
E_\rho\, ,\quad
M_\alpha \,= \, E_1
\quad\mbox{ and }\quad
U_{\alpha^{-1}} \, = \,
\bigoplus_{\stackrel{{\scriptstyle\rho\in R(\alpha)}}{\rho>1}} E_\rho \, .
\]
Furthermore, $E=U_\alpha \oplus M_\alpha\oplus U_{\alpha^{-1}}$
as a $\, \K$-vector space.
\end{prop}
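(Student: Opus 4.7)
The plan is to exploit the adapted norm supplied by Proposition~\ref{propadapt} to reduce everything to coordinatewise estimates. Fix a norm $\|.\|$ on $E$ adapted to $\alpha$, so that $E=\bigoplus_{\rho\in R(\alpha)} E_\rho$ with each $E_\rho$ an $\alpha$-invariant subspace, $\|\alpha(x)\|=\rho\|x\|$ for $x\in E_\rho$, and $\|\sum_\rho x_\rho\|=\max_\rho\|x_\rho\|$. Iterating {\bf A3} gives $\|\alpha^n(x_\rho)\|=\rho^n\|x_\rho\|$ for every $n\in\Z$ and $x_\rho\in E_\rho$; combined with {\bf A2}, we obtain the key identity
\[
\|\alpha^n(x)\|\;=\;\max_{\rho\in R(\alpha)}\,\rho^n\,\|x_\rho\|\qquad(n\in\Z)
\]
for $x=\sum_\rho x_\rho\in E$.

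Given this identity, the three set-theoretic equalities are read off in parallel. For $U_\alpha$, the condition $\alpha^n(x)\to 0$ is equivalent to $\rho^n\|x_\rho\|\to 0$ for each $\rho\in R(\alpha)$, which is automatic when $\rho<1$ and forces $x_\rho=0$ when $\rho\geq 1$; hence $U_\alpha=\bigoplus_{\rho<1}E_\rho$. The formula for $U_{\alpha^{-1}}$ follows by applying this to $\alpha^{-1}$, noting that $R(\alpha^{-1})=\{\rho^{-1}\colon \rho\in R(\alpha)\}$ and $E_{\rho^{-1}}(\alpha^{-1})=E_\rho(\alpha)$, so the condition $\rho^{-1}<1$ becomes $\rho>1$. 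For $M_\alpha$, I would first invoke the fact that $E$, being a finite-dimensional normed space over a local field, has the Heine--Borel property, so relative compactness of $\alpha^\Z(x)$ is equivalent to $\sup_{n\in\Z}\|\alpha^n(x)\|<\infty$. By the key identity this amounts to $\sup_{n\in\Z}\rho^n\|x_\rho\|<\infty$ for each $\rho\in R(\alpha)$, which fails unless either $\rho=1$ or $x_\rho=0$; thus $M_\alpha=E_1$.

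Finally, the direct sum decomposition $E=U_\alpha\oplus M_\alpha\oplus U_{\alpha^{-1}}$ is immediate from (\ref{isdsum}), since we have just identified $U_\alpha$, $M_\alpha$, and $U_{\alpha^{-1}}$ with the three subsums of the canonical decomposition indexed by $\rho<1$, $\rho=1$, and $\rho>1$, respectively.

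I expect no serious obstacle here: once the adapted norm is in hand, the three containments each come down to the single observation that in an ultrametric norm ``the strongest wins,'' so that the sequence $\|\alpha^n(x)\|$ is controlled componentwise. The one place care is needed is in the treatment of $M_\alpha$, where relative compactness rather than mere boundedness is demanded; this is where the local-field hypothesis on $\K$ enters, via the Heine--Borel property for finite-dimensional $\K$-vector spaces, ensuring the boundedness criterion is sufficient.
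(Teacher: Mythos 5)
Your proposal is correct and follows the same route as the paper, which simply states that "using an adapted norm on $E$, the characterizations of $U_\alpha$, $M_\alpha$ and $U_{\alpha^{-1}}$ are clear" and cites (\ref{isdsum}) for the direct sum; you have merely filled in the details, including the correct observation that relative compactness of the two-sided orbit reduces to boundedness via the Heine--Borel property of finite-dimensional vector spaces over local fields.
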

\begin{proof}
Using an adapted norm on~$E$,
the characterizations of $U_\alpha$,
$M_\alpha$
and $U_{\alpha^{-1}}$ are clear.
That $E$ is the indicated direct sum
follows from (\ref{isdsum}).
\end{proof}
\subsection*{Tidy subgroups and the scale
for a linear automorphism}\label{secpre5}
In the preceding situation, define
\begin{equation}\label{Eplus}
E_+\; :=\; \bigoplus_{\rho\geq 1}E_\rho\quad
\mbox{and}\quad
E_-\; :=\; \bigoplus_{\rho\leq 1}E_\rho\, .
\end{equation}
\begin{prop}\label{proptdlin}
For each $r>0$ and norm $\|.\|$ on~$E$
adapted to~$\alpha$, the ball
$B_r:=\{x\in E\colon \|x\|<r\}$
is tidy for~$\alpha$,
with
\begin{equation}\label{givestidy}
(B_r)_\pm\; =\; B_r\cap E_\pm\, .
\end{equation}
The scale of $\alpha$ is given by
\begin{equation}\label{stasta}
s_E(\alpha)\;\,
=\;\, \Delta_{E_+}(\alpha|_{E_+}^{E_+})\;\,
=\;\, |\hspace*{-.4mm}\dt(\alpha|_{E_+}^{E_+})\hspace*{.2mm}| \, \;=
\prod_{\stackrel{{\scriptstyle j
\in \{1,\ldots, d\}}}{{\scriptstyle|\lambda_j|\geq 1}}}|\lambda_j|\, ,
\end{equation}
where $\lambda_1,\ldots, \lambda_d$
are the eigenvalues of~$\alpha_{\wb{\K}}$
$($occurring with multiplicities$)$.
\end{prop}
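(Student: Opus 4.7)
The plan is to first verify that $B_r$ is a compact open subgroup of $(E,+)$ (immediate from~\textbf{A1} together with local compactness of~$E$), and then to identify $(B_r)_\pm$. Using \textbf{A2} and \textbf{A3}, for $x=\sum_\rho x_\rho\in E$ one has $\|\alpha^{-n}(x)\|=\max_\rho \rho^{-n}\|x_\rho\|$; so $x\in(B_r)_+=\bigcap_{n\geq 0}\alpha^n(B_r)$ if and only if $\rho^{-n}\|x_\rho\|<r$ for every $n\geq 0$ and every $\rho\in R(\alpha)$. For $\rho<1$ this forces $x_\rho=0$, while for $\rho\geq 1$ it reduces to $\|x_\rho\|<r$. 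Hence $(B_r)_+=B_r\cap E_+$, and by the same argument applied to $\alpha^{-1}$, $(B_r)_-=B_r\cap E_-$.

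For~\textbf{TA}, given $x\in B_r$ set $x_+:=\sum_{\rho\geq 1}x_\rho\in E_+$ and $x_-:=\sum_{\rho<1}x_\rho\in E_-$; by~\textbf{A2}, $\|x_\pm\|\leq\|x\|<r$, so $x_\pm\in(B_r)_\pm$, giving $B_r=(B_r)_++(B_r)_-$. For~\textbf{TB}, $\alpha$-invariance of $E_+$ yields $(B_r)_{++}=\bigcup_n[\alpha^n(B_r)\cap E_+]$. For $y=\sum_{\rho\geq 1}y_\rho\in E_+$, the condition $y\in\alpha^n(B_r)$ reads $\|y_\rho\|<\rho^n r$ for all $\rho\geq 1$, which for $\rho>1$ is satisfied as soon as $n$ is large enough, while for $\rho=1$ reduces to $\|y_1\|<r$ independently of~$n$. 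Therefore $(B_r)_{++}=(B_r\cap E_1)\oplus\bigoplus_{\rho>1}E_\rho$. This is the intersection of the closed subspace~$E_+$ with the clopen preimage of $B_r\cap E_1$ under the continuous projection $E\to E_1$, hence is closed in~$E$.

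With tidiness established, the scale is $s_E(\alpha)=[\alpha(B_r):\alpha(B_r)\cap B_r]$. Via~\textbf{A2}, both $B_r$ and $\alpha(B_r)=\{y:\|y_\rho\|<\rho r\,\forall\rho\}$ decompose as internal direct sums of balls in the $E_\rho$, so $\alpha(B_r)\cap B_r=\bigoplus_\rho(B^{E_\rho}_{\min(r,\rho r)}(0))$; the two sums agree in the $\rho\leq 1$ factors, leaving
\[
[\alpha(B_r):\alpha(B_r)\cap B_r]\;=\;\prod_{\rho>1}[B^{E_\rho}_{\rho r}(0)\,:\,B^{E_\rho}_r(0)].
\]
Since $B^{E_\rho}_r(0)\subseteq\alpha(B^{E_\rho}_r(0))=B^{E_\rho}_{\rho r}(0)$ by~\textbf{A3}, formula~(\ref{howgtmod}) identifies each factor with $\Delta_{E_\rho}(\alpha|_{E_\rho}^{E_\rho})$, and Lemma~\ref{laindx} evaluates this as $\prod_{|\lambda_j|=\rho}|\lambda_j|$. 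Multiplying over $\rho>1$ (and freely adjoining the trivial factor from $\rho=1$, where all eigenvalues have modulus one) yields $\prod_{|\lambda_j|\geq 1}|\lambda_j|$, which by a final application of Lemma~\ref{laindx} coincides with $|\dt(\alpha|_{E_+}^{E_+})|=\Delta_{E_+}(\alpha|_{E_+}^{E_+})$.

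The main difficulty is~\textbf{TB}: the $E_1$-block, on which~$\alpha$ acts by isometries, neither contracts under~$\alpha^n$ nor expands under~$\alpha^{-n}$, so one must track it carefully to see that $(B_r)_{++}$ picks up precisely the open ball of radius~$r$ in~$E_1$ (and no more) before one can recognize the union as closed. A secondary point, really just bookkeeping, is to invoke the correct Willis-theoretic identity $s(\alpha)=[\alpha(V):\alpha(V)\cap V]$ for tidy~$V$; once that is in hand the scale calculation is reduced to the index formula~(\ref{howgtmod}) and Lemma~\ref{laindx}, both already available.
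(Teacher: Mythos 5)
Your argument is correct and follows essentially the same route as the paper: the identification $(B_r)_\pm=B_r\cap E_\pm$ via \textbf{A2}/\textbf{A3}, the block decomposition $B_r=\bigoplus_\rho(B_r\cap E_\rho)$ for \textbf{TA}, the explicit form $(B_r)_{++}=(B_r\cap E_1)\oplus\bigoplus_{\rho>1}E_\rho$ for \textbf{TB}, and the reduction of the scale to a module via (\ref{howgtmod}) and Lemma~\ref{laindx}. The only (immaterial) divergence is that you evaluate $[\alpha(B_r):\alpha(B_r)\cap B_r]$ block by block, whereas the paper computes $[\alpha(K):K]$ directly for $K=(B_r)_+$; both rest on the same standard Willis-theoretic identity for the scale at a tidy subgroup.
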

\begin{proof}
Let $x\in E$.
It is clear from the definition
of an adapted norm that $\alpha^{-n}(x)\in B_r$
for all $n\in \N_0$ (i.e., $x\in (B_r)_+$)
if and only if $x\in \bigoplus_{\rho\geq 1}E_\rho=E_+$.
Thus $(B_r)_+$ is given by (\ref{givestidy}),
and $(B_r)_-$ can be discussed
analogously.
It follows from property {\bf A2} in
the definition of an adapted norm that
\begin{equation}\label{dubstr}
B_r\; =\; \bigoplus_{\rho\in R(\alpha)} (B_r\cap E_\rho)\, ,
\end{equation}
and thus
$B_r=(B_r)_++(B_r)_-$.
Hence $B_r$ is tidy above.
Since
\[
(B_r)_{++}\; =\; \bigcup_{n\in \N_0}\alpha^n((B_r)_+)
\;=\; (B_r\cap E_1)\oplus \bigoplus_{\rho>1}E_\rho 
\]
is closed, $B_r$ is also tidy below
and thus $B_r$ is tidy.\\[2.1mm]
Let $\lambda$ be a Haar measure on~$E_+$.
Since $K:=(B_r)_+$ is open in $E_+$,
compact, and $K\sub \alpha(K)$,
we obtain
\[
\Delta_{E_+}\big(\alpha|_{E_+}^{E_+}\big)\,=\,
\frac{\lambda(\alpha(K))}{\lambda(K)}\,=\,
\frac{[\alpha(K): K]\,\lambda(K)}{\lambda(K)}\,=\,
[\alpha(K): K]\,=\, s_E(\alpha)\,,
\]
using the translation invariance of Haar measure for
the second equality. Here
\[
\Delta_{E_+}\big(\alpha|_{E_+}^{E_+}\big)
\; =\; \big|\hspace*{-.4mm}\dt\big(\alpha|_{E_+}^{E_+}\big)\hspace*{.1mm}\big|
\; = \, {\displaystyle \prod_{\stackrel{{\scriptstyle j\in
\{1,\ldots, d\}}}{|\lambda_j|\geq 1}}|\lambda_j|}\,,
\]
by
(\ref{modvia}) in
Lemma~\ref{laindx}.
\end{proof}
\section{Scale and tidy subgroups for automorphisms
of {\boldmath $p$}-adic Lie groups}\label{sec5}
In this section, we determine tidy subgroups and calculate the scale
for\linebreak
automorphisms of a $p$-adic Lie group.
\begin{numba}\label{nwnbr}
Applying (\ref{locnat})
to an automorphism $\alpha$ of a $p$-adic
Lie group~$G$,
we find an exponential function
$\exp_G\colon V\to G$
which is a diffeomorphism onto its
image, and an open $0$-neighbourhood
$W\sub V$ such that $L(\alpha).W\sub V$
and
\begin{equation}\label{locln}
\exp_G\circ L(\alpha)|_W\; =\; \alpha\circ \exp_G|_W\,.
\end{equation}
Thus $\alpha$ is locally linear
in a suitable (logarithmic) chart,
which simplifies an understanding of the dynamics
of~$\alpha$.
Many aspects can be reduced to
the dynamics of the linear automorphism
$\beta:=L(\alpha)$ of $L(G)$,
as discussed in Section~\ref{secitlin}.
After shrinking~$W$, we may assume that also
$L(\alpha^{-1}).W\sub V$
and
\begin{equation}\label{invneweq}
\exp_G \circ \, L(\alpha^{-1})|_W\; =\;
\alpha^{-1}\circ \exp_G|_W\,.
\end{equation}
\end{numba}
\noindent
To construct subgroups tidy for~$\alpha$,
we pick a norm $\|.\|$ on $E:=L(G)$ adapted
to $\beta:=L(\alpha)$,
as in Proposition~\ref{propadapt}.
There is $r>0$ such that $B_r:=B_r^E(0)\sub W$
and
\[
V_t\; :=\; \exp_G(B_t)
\]
is a compact open subgroup
of~$G$, for each $t\in \;]0,r]$
(see Lemma~\ref{laballs}).
Abbreviate
$Q_t:=B_t\cap \bigoplus_{\rho<1}\, E_\rho$.
Since $B_t=(B_t)_+ \oplus Q_t$
by (\ref{dubstr})
(applied to the linear automorphism
$\beta$),
the ultrametric inverse function theorem
(Proposition~\ref{invfct}) and (\ref{Tay01})
imply that
$V_t = \exp_G((B_t)_+)\exp_G(Q_t)$ and thus
\begin{equation}\label{gta}
V_t\;=\; \exp_G((B_t)_+)\exp_G((B_t)_-)
\end{equation}
for all $t\in \;]0,r]$,
after shrinking $r$ if necessary.
Then we have (as first recorded in \cite[Theorem~3.4\,(c)]{MIX}):
\begin{thm}\label{scainft}
The subgroup $V_t$ of~$G$
is tidy for~$\alpha$, for each $t\in \;]0,r]$.
Moreover,
\[
s_G(\alpha)\;=\; s_{L(G)}(L(\alpha))\,,
\]
where $s_{L(G)}(L(\alpha))$ can be calculated as in
Proposition~{\rm\ref{proptdlin}}.
\end{thm}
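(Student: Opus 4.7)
The plan is to reduce everything to the linear dynamics of $\beta := L(\alpha)$ on $E := L(G)$ via the local linearization (\ref{locln})--(\ref{invneweq}). The key claim I would prove is that
\[
(V_t)_+ \;=\; \exp_G((B_t)_+)\quad\text{and}\quad (V_t)_- \;=\; \exp_G((B_t)_-),
\]
with $(B_t)_\pm := B_t\cap E_\pm$ as in Proposition~\ref{proptdlin} applied to~$\beta$. Granted this, combining with (\ref{gta}) immediately gives $V_t = (V_t)_+(V_t)_-$, i.e.\ condition \textbf{TA}, while \textbf{TB} is automatic from Corollary~\ref{autotb}. So $V_t$ is tidy for~$\alpha$.

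To prove the claim (I treat only $(V_t)_+$; the other case is symmetric), I would show by induction on~$n$ that for $y\in B_t$, one has $\alpha^{-n}(\exp_G(y))\in V_t$ for every $n\in\N_0$ if and only if $\beta^{-n}(y)\in B_t$ for every $n\in\N_0$. The ``if'' direction uses that, by properties \textbf{A2}--\textbf{A3} of the adapted norm, $\beta^{-1}$ is norm-nonincreasing on $E_+$; so whenever $y\in (B_t)_+$, all iterates $\beta^{-n}(y)$ stay in $B_t\subseteq W$, and repeated use of (\ref{invneweq}) yields $\alpha^{-n}(\exp_G(y))=\exp_G(\beta^{-n}(y))\in V_t$. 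For ``only if'', suppose $\alpha^{-1}(\exp_G(y))\in V_t=\exp_G(B_t)$; then it equals $\exp_G(z)$ for some $z\in B_t\subseteq V$. Since $y\in W$ and $\beta^{-1}(y)\in V$ by (\ref{invneweq}), injectivity of $\exp_G|_V$ forces $z=\beta^{-1}(y)\in B_t$. Iterating and intersecting over~$n$ then gives, via (\ref{givestidy}), $(V_t)_+=\exp_G((B_t)_+)$.

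For the scale identity I would use the standard formula $s_G(\alpha)=[\alpha(V_+):V_+]$ for any $V$ tidy for~$\alpha$. Applying (\ref{locln}) to $(B_t)_+\subseteq W$ gives $\alpha((V_t)_+)=\exp_G(\beta((B_t)_+))$. After shrinking~$t$ if necessary (which does not affect the value of the scale) so that both $(B_t)_+$ and $\beta((B_t)_+)$ lie in a common BCH-ball, Remark~\ref{smidx} identifies the group-theoretic index $[\alpha((V_t)_+):(V_t)_+]$ with the additive index $[\beta((B_t)_+):(B_t)_+]$ inside~$E_+$. The latter equals $s_{L(G)}(L(\alpha))$ by Proposition~\ref{proptdlin}, cf.~(\ref{stasta}), completing the identification.

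The main obstacle is the ``only if'' direction of the induction: the local linearization holds only on~$W$, so one must ensure the orbits $\beta^{-n}(y)$ never escape this domain. The adaptedness of $\|.\|$ makes this automatic on $E_+$ and, symmetrically, on $E_-$; without an adapted norm, escape from $W$ would have to be excluded by more delicate estimates. A secondary technical point concerns the scale step: the pair $(B_t)_+\subseteq\beta((B_t)_+)$ is not literally of the form $B_s\subseteq B_r$, so one needs the general principle behind Remark~\ref{smidx}---that inside a small enough BCH-ball the $+$-cosets and $*$-cosets of any common additive/$*$-subgroup coincide, the higher BCH terms being ultrametrically negligible.
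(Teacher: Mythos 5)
Your argument is correct, and for the tidiness assertion it is essentially the paper's: the paper likewise deduces \textbf{TA} from the inclusion $\exp_G((B_t)_\pm)\subseteq (V_t)_\pm$ (obtained by iterating (\ref{locln}) and (\ref{invneweq}), exactly as in your ``if'' direction) together with the decomposition (\ref{gta}), and gets \textbf{TB} for free from Corollary~\ref{autotb}. Where you genuinely diverge is the scale computation. The paper quotes the identity $L((V_t)_\pm)=L(G)_\pm$ from an earlier article and then invokes Bourbaki's result that the module of a Lie group automorphism can be computed on the Lie algebra level, concluding $s_G(\alpha)=\Delta_{(V_t)_{++}}(\alpha)=\Delta_{L(G)_+}(L(\alpha))=s_{L(G)}(L(\alpha))$. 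You instead prove the exact identification $(V_t)_+=\exp_G((B_t)_+)$ (your ``only if'' induction, which is sound: injectivity of $\exp_G|_V$ and $L(\alpha^{-1}).W\subseteq V$ keep the backward orbit under control at each step) and then evaluate $[\alpha((V_t)_+):(V_t)_+]$ directly by transporting it to the additive index $[\beta((B_t)_+):(B_t)_+]$ via the coset comparison of Remark~\ref{smidx}. The technical point you flag at the end is real but harmless: the subgroup whose cosets are being counted, $(B_t)_+=B_t\cap E_+$, \emph{is} a ball in the subalgebra $E_+$ (note $[E_\rho,E_\sigma]\subseteq E_{\rho\sigma}$, so $E_+$ is BCH-closed), and (\ref{premeas}) restricted to $E_+$ gives $x*(B_t)_+=x+(B_t)_+$ for all $x$ in a slightly larger ball containing $\beta((B_t)_+)$; it is irrelevant that the ambient set $\beta((B_t)_+)$ is a box rather than a ball. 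The trade-off is clear: the paper's route is shorter but leans on two external citations (the Lie-algebra computation of the module, and $L((V_t)_\pm)=L(G)_\pm$), whereas yours is self-contained within the lecture notes, at the cost of proving the equality $(V_t)_+=\exp_G((B_t)_+)$ rather than one inclusion and of spelling out the index transfer. Your version also has the side benefit of making explicit what $(V_t)_\pm$ actually is, which is the content the paper delegates to the citation.
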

\begin{proof}
Since $\beta((B_t)_-)\sub (B_t)_-$
and $\beta^{-1}((B_t)_+)\sub (B_t)_+$,
it follows with (\ref{locln}),
(\ref{invneweq}) and induction that
\[
\alpha^{\mp n}(\exp_G((B_t)_{\pm}))\; = \;
\exp_G (\beta^{\mp n}((B_t)_\pm))\; \sub \; \exp_G((B_t)_\pm)
\]
for each $n\in \N_0$ and hence
\[
\exp_G((B_t)_\pm)\; \sub \; (V_t)_\pm\,.
\]
Therefore $V_t=(V_t)_+(V_t)_-$
(using (\ref{gta}))
and thus~$V_t$ is tidy above.
By Corollary~\ref{autotb}, $V_t$ is also tidy below
and thus~$V_t$ is tidy for~$\alpha$.\\[2.1mm]
Closer inspection shows that
$L((V_t)_\pm)=L(G)_\pm$ (see \cite[Theorem~3.5]{SCA}
or proof of Theorem~\ref{thmposca} below),
where $L(G)_\pm$
is defined as in (\ref{Eplus}),
using~$\beta$.
Since the module of a Lie group
automorphism can be calculated
on the Lie algebra level (using
Proposition~55\,(ii)
in \cite[Chapter 3, \S3, no.\,16]{Bo2}),
it follows that
$s_G(\alpha)=\Delta_{(V_t)_{++}}(\alpha|_{(V_t)_{++}})
=\Delta_{L(G)_+}(L(\alpha)|_{L(G)_+})
=s_{L(G)}(L(\alpha))$.
\end{proof}
\section{{\boldmath $p$-adic} contraction groups}\label{sec6}
Exploiting the local linearity
of~$\alpha$ (see (\ref{locln}))
and the closedness
of $p$-adic contraction groups
(see Corollary~\ref{corclo}),
it is possible to reduce the discussion
of $p$-adic contraction groups
to contraction groups
of linear automorphisms,
as in Proposition~\ref{linearcase}.
We record the results, due to Wang~\cite[Theorem~3.5]{Wan}.
\begin{thm}\label{Wa1}
Let $G$ be a $p$-adic Lie group
and $\alpha\colon G\to G$ be an
automorphism.
Abbreviate $\beta:=L(\alpha)$.
Then
$U_\alpha$, $M_\alpha$
and $U_{\alpha^{-1}}$
are closed subgroups
of~$G$ with Lie algebras
\[
L(U_\alpha)=U_\beta\,,\quad
L(M_\alpha)=M_\beta
\quad\mbox{and}\quad
L(U_{\alpha^{-1}})=U_{\beta^{-1}}\, ,
\]
respectively.
Furthermore,
$U_\alpha M_\alpha U_{\alpha^{-1}}$
is an open identity neighbourhood in~$G$,
and the product map
\[
U_\alpha \times M_\alpha \times U_{\alpha^{-1}}
\to U_\alpha M_\alpha U_{\alpha^{-1}}\,,
\quad
(x,y,z)\mto xyz
\]
is an analytic diffeomorphism.\,\Punkt
\end{thm}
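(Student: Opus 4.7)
The plan is to reduce everything to the linear case handled in Proposition \ref{linearcase} via the local linearization (\ref{locln}). Closedness of $U_\alpha$ and $U_{\alpha^{-1}}$ is already supplied by Corollary \ref{corclo}, and $M_\alpha$ is closed by the tidy-subgroup argument mentioned after Definition \ref{basecontr}. What remains are two tasks: identify the Lie algebras of these subgroups with $U_\beta$, $M_\beta$, $U_{\beta^{-1}}$, and show that the product map is an analytic diffeomorphism onto an open identity neighborhood.

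For the Lie algebra identification, I would first fix an adapted norm on $E:=L(G)$ with respect to $\beta$ (Proposition \ref{propadapt}) and choose $r>0$ small enough that $B_r\subseteq W$, where $W$ is the neighborhood on which (\ref{locln}) and (\ref{invneweq}) hold, and $V_r=\exp_G(B_r)$ is a BCH-subgroup. Given $x\in U_\beta\cap B_r$, property \textbf{A3} yields $\|\beta^n(x)\|\to 0$ monotonically, so the forward $\beta$-orbit of $x$ stays inside $B_r$; iterating (\ref{locln}) gives $\alpha^n(\exp_G(x))=\exp_G(\beta^n(x))\to 1$, so $\exp_G(x)\in U_\alpha$. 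Since $\Z_p x\subseteq U_\beta$ for every $x\in U_\beta$, the characterization of Lie algebras of closed subgroups inside a BCH-chart (recalled in the proof of Proposition \ref{veryess}) gives $U_\beta\subseteq L(U_\alpha)$. An analogous argument yields $U_{\beta^{-1}}\subseteq L(U_{\alpha^{-1}})$; for $x\in M_\beta=E_1$ one has $\|\beta^n(x)\|=\|x\|$ for every $n\in\Z$, so $\alpha^\Z(\exp_G(tx))\subseteq V_r$ is relatively compact and $M_\beta\subseteq L(M_\alpha)$.

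For the product decomposition, the map $\mu(x,y,z):=xyz$ is analytic as a composition of the restriction of multiplication to the closed Lie subgroups $U_\alpha$, $M_\alpha$, $U_{\alpha^{-1}}$, and its differential at $(1,1,1)$ is the summation $L(U_\alpha)\oplus L(M_\alpha)\oplus L(U_{\alpha^{-1}})\to L(G)$. Combining the three inclusions above with the direct-sum decomposition $L(G)=U_\beta\oplus M_\beta\oplus U_{\beta^{-1}}$ from Proposition \ref{linearcase} and a dimension count, each inclusion must be an equality, and $T_{(1,1,1)}\mu$ is a linear isomorphism. The ultrametric inverse function theorem (Proposition \ref{invfct}) therefore makes $\mu$ an analytic diffeomorphism on a product of small balls around $(1,1,1)$ onto an open identity neighborhood.

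The main obstacle is upgrading this local diffeomorphism at the identity to the global statement about the entire product $U_\alpha\times M_\alpha\times U_{\alpha^{-1}}$. For global injectivity, suppose $xyz=x'y'z'$; applying $\alpha^n$ as $n\to+\infty$ and $n\to-\infty$ and exploiting the divergent asymptotic behaviors (orbits in $U_\alpha$ contract to $1$, orbits in $M_\alpha$ stay bounded, and non-trivial orbits in $U_{\alpha^{-1}}$ escape every compact set) should pin down the three factors, essentially via the triviality of intersections such as $U_\alpha\cap M_\alpha U_{\alpha^{-1}}=\{1\}$. Once global injectivity is in hand, the $\alpha$-invariance of all three subgroups together with left- and right-translation arguments propagate the local diffeomorphism property from the identity to arbitrary points of the source, completing the proof.
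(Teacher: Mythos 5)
Your overall strategy---local linearization via (\ref{locln}), closedness from Corollary~\ref{corclo} and the remark following Definition~\ref{basecontr}, and reduction to the linear model of Proposition~\ref{linearcase}---is exactly the reduction the text indicates (the theorem is only recorded here, with Wang's paper as source). The inclusions $U_\beta\subseteq L(U_\alpha)$, $M_\beta\subseteq L(M_\alpha)$ and $U_{\beta^{-1}}\subseteq L(U_{\alpha^{-1}})$ are argued correctly via the adapted norm and the BCH characterization of Lie algebras of closed subgroups.

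There is, however, a genuine gap at the ``dimension count.'' From the three inclusions and $L(G)=U_\beta\oplus M_\beta\oplus U_{\beta^{-1}}$ you can only conclude equality if you already know that the sum $L(U_\alpha)+L(M_\alpha)+L(U_{\alpha^{-1}})$ is direct, i.e.\ that $T_{(1,1,1)}\mu$ is injective---which is precisely what is at stake; pairwise trivial intersections of the three subgroups would not suffice (think of three distinct lines in a plane). The same missing fact resurfaces in your last paragraph, where ``applying $\alpha^n$ \dots should pin down the three factors'' quietly uses $U_\alpha\cap M_\alpha U_{\alpha^{-1}}=\{1\}$ and $M_\alpha\cap U_{\alpha^{-1}}=\{1\}$; the latter is not formal (it fails when contraction groups are not closed, cf.\ 7.1). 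Both gaps are closed by one citation you did not use: Proposition~\ref{propmetr}, applicable because $p$-adic Lie groups are metrizable and $U_\alpha$, $U_{\alpha^{-1}}$ are closed by Corollary~\ref{corclo}, already asserts that the product map $\pi$ is injective with open image. Granting that, let $\Omega:=\exp_G(U_\beta\cap B_s)\exp_G(M_\beta\cap B_s)\exp_G(U_{\beta^{-1}}\cap B_s)$ be the local product neighbourhood furnished by Proposition~\ref{invfct}; since each factor lies in $U_\alpha$, $M_\alpha$, $U_{\alpha^{-1}}$ respectively, injectivity of $\pi$ forces $U_\alpha\cap\Omega=\exp_G(U_\beta\cap B_s)$ (and similarly for the other two groups). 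This yields the Lie algebra identities outright, shows $T_{(1,1,1)}\mu$ is an isomorphism, and reduces the global diffeomorphism statement to the local one by the translation and $\alpha$-invariance argument you describe. So the architecture is right, but the order of deductions must be reversed: injectivity of $\pi$ comes first and the Lie algebra computation follows from it, not the other way around.
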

\noindent
In Part\,(ii) of his Theorem~3.5, Wang also obtained essential
information concerning the groups $U_\alpha$:
\begin{thm}\label{Wa2}
Let~$G$ be a $p$-adic
Lie group admitting a contractive
automorphism~$\alpha$.
Then $G$ is a unipotent linear algebraic
group defined over~$\Q_p$,
and hence nilpotent.\,\Punkt
\end{thm}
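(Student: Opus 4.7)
My plan has two stages: first I would prove $\cg := L(G)$ is a nilpotent Lie algebra, then I would globalize $\exp_G$ to an analytic group isomorphism from $(\cg, *)$ onto $G$, where $*$ denotes the (everywhere-defined) Baker--Campbell--Hausdorff product. This exhibits $G$ as a unipotent linear algebraic group over $\Q_p$; nilpotency of $G$ then follows automatically.

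For the nilpotency of $\cg$: contractivity of $\alpha$ gives $U_\alpha = G$, so Theorem~\ref{Wa1} yields $\cg = L(U_\alpha) = U_\beta$ with $\beta := L(\alpha)$. By Proposition~\ref{linearcase}, every eigenvalue $\lambda$ of $\beta$ on $\cg_{\wb{\Q_p}} := \cg \otimes_{\Q_p} \wb{\Q_p}$ satisfies $|\lambda| < 1$. Set $\rho_1 := \max R(\beta) < 1$ and $\rho_0 := \min R(\beta) > 0$. Each term of the lower central series $\cg_{k+1} := [\cg, \cg_k]$ is a $\beta$-stable subspace of $\cg$, so any eigenvalue of $\beta$ on $\cg_k$ already lies in $R(\beta)$ and hence has absolute value $\geq \rho_0$ whenever $\cg_k \neq 0$. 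On the other hand, the standard containment $[(\cg_{\wb{\Q_p}})_{(\lambda)}, (\cg_{\wb{\Q_p}})_{(\mu)}] \subseteq (\cg_{\wb{\Q_p}})_{(\lambda\mu)}$ of generalized eigenspaces (which holds because $\beta$ preserves brackets) implies inductively that every eigenvalue of $\beta$ on $\cg_k$ is a product of $k$ eigenvalues of $\beta$ on $\cg$, hence has absolute value $\leq \rho_1^k$. Choosing $k$ with $\rho_1^k < \rho_0$ forces $\cg_k = 0$, so $\cg$ is nilpotent.

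With $\cg$ nilpotent, the BCH series terminates and $(\cg, *)$ is a connected unipotent linear algebraic $\Q_p$-group. To identify it with $G$ I would equip $\cg$ with a norm adapted to $\beta$; property {\bf A3} then gives $\beta(B_s) \subseteq B_s$ for all $s > 0$, and after shrinking $s$ one also arranges that $V := \exp_G(B_s)$ is a BCH-subgroup on which $\exp_G \circ \beta = \alpha \circ \exp_G$ (see \ref{nwnbr}), so in particular $\alpha(V) \subseteq V$. For $x \in \cg$, choose any $n$ with $\beta^n(x) \in B_s$ (which exists since $\beta^n(x) \to 0$) and set
\[
\phi(x) \; := \; \alpha^{-n}\bigl(\exp_G(\beta^n(x))\bigr).
\]
Iterating $\exp_G \circ \beta = \alpha \circ \exp_G$ inside $V$ shows $\phi$ is independent of the choice of $n$. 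Because $\beta$ preserves $*$, taking $n$ so large that $\beta^n(x)$, $\beta^n(y)$ and $\beta^n(x * y)$ all lie in $B_s$ and exploiting the BCH-subgroup structure of $V$ converts the identity $\beta^n(x * y) = \beta^n(x) * \beta^n(y)$ into $\phi(x * y) = \phi(x) \phi(y)$. Surjectivity comes from $\alpha^n(g) \in V$ for large $n$, injectivity from injectivity of $\exp_G$ on $B_s$, and analyticity is immediate from the local formula. Thus $\phi \colon (\cg, *) \to G$ is an analytic isomorphism, and the algebraic-group structure of $(\cg, *)$ transports.

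The main obstacle I expect is this globalization step: the piecewise recipe for $\phi$ must be checked to be well-defined and multiplicative on all of $\cg$, and both checks rest on the forward invariance $\alpha(V) \subseteq V$, which is exactly what an adapted norm provides. Once this invariance is in hand, the merely \emph{local} naturality of the exponential propagates, via iteration of $\alpha$ and $\beta$, to a \emph{global} isomorphism with the BCH group.
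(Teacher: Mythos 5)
The paper records this theorem without proof, attributing it to Wang's Theorem~3.5\,(ii), so there is no in-text argument to compare against; judged on its own, your proof is correct and follows what is essentially the standard route to Wang's result. Stage one is sound: $U_\alpha=G$ gives $L(G)=U_\beta$ via Theorem~\ref{Wa1}, Proposition~\ref{linearcase} then places all spectral absolute values of $\beta=L(\alpha)$ strictly below $1$, and the bracket relation between generalized eigenspaces traps the spectral values on the $k$-th term of the lower central series below $\rho_1^k$ while $\beta$-stability keeps them at or above $\rho_0$, so the series terminates. Stage two is also sound: an adapted norm gives $\beta(B_s)\sub B_s$, hence $\alpha(V)\sub V$ for $V=\exp_G(B_s)$, and this forward invariance is exactly what makes $\phi(x)=\alpha^{-n}(\exp_G(\beta^n(x)))$ well defined, multiplicative (via the BCH-subgroup property and $\beta^n(x*y)=\beta^n(x)*\beta^n(y)$), bijective and bianalytic. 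Two steps you state without justification are genuinely standard but deserve a word: the containment $[(\cg_{\wb{\Q_p}})_{(\lambda)},(\cg_{\wb{\Q_p}})_{(\mu)}]\sub(\cg_{\wb{\Q_p}})_{(\lambda\mu)}$ requires the usual induction showing that $(\beta-\lambda\mu)^N[x,y]$ is a sum of brackets of the form $[(\beta-\lambda)^i x',(\beta-\mu)^j y']$ with $i+j\geq N$; and to see that $(\cg,*)$ is a \emph{linear} algebraic group one should embed the nilpotent Lie algebra $\cg$ into strictly upper triangular matrices (Birkhoff/Ado, available in characteristic $0$) and identify $(\cg,*)$ with the Zariski-closed unipotent subgroup $\exp(\cg)$ of the unitriangular group. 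Neither point is a gap.
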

\noindent
We remark that Theorem~\ref{Wa1}
becomes false in general
for Lie groups over local fields
of positive characteristic (as illustrated in~\ref{ncgp}).
However, we shall see later that its conclusions
remain
valid if closedness
of~$U_\alpha$ is made an extra
hypothesis.
Assuming closedness of~$U_\alpha$,
a certain
analogue
of Theorem~\ref{Wa1}
can even be obtained in a purely topological
setting:
\begin{prop}\label{propmetr}
Let $G$ be a totally disconnected,
locally compact group
and $\alpha\colon G\to G$ be
an automorphism.
If~$U_\alpha$ is closed,
then $U_\alpha M_\alpha U_{\alpha^{-1}}$
is an open identity neighbourhood in~$G$
and the product map
\begin{equation}\label{prodump}
\pi\colon U_\alpha \times M_\alpha \times U_{\alpha^{-1}}
\to U_\alpha M_\alpha U_{\alpha^{-1}}\,,
\quad
(x,y,z)\mto xyz
\end{equation}
is a homeomorphism.
\end{prop}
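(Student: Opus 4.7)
The plan is to reduce the proposition to the behaviour of a single well-chosen compact open subgroup $V\leq G$ tidy for $\alpha$, and then exploit the algebraic relations between $U_\alpha$, $M_\alpha$ and $U_{\alpha^{-1}}$ to globalize the local picture. First I would invoke the structure theory of \cite{BaW}: assuming $U_\alpha$ is closed (and using metrizability of $G$, which ensures one can work with sequences and pass to sufficiently small compact open subgroups), there exists a tidy~$V$ such that $V\cap U_\alpha$, $V\cap M_\alpha$, $V\cap U_{\alpha^{-1}}$ are compact open subgroups of the respective groups, $V = (V\cap U_\alpha)\,(V\cap M_\alpha)\,(V\cap U_{\alpha^{-1}})$, and the product map on these three factors is a homeomorphism onto~$V$. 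This immediately yields $V\subseteq U_\alpha M_\alpha U_{\alpha^{-1}}$, so that $1$ is an interior point of the set in question, and gives a local inverse to $\pi$ near the identity.

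Next I would record the two basic algebraic facts that make the decomposition robust: $M_\alpha$ normalizes both $U_\alpha$ and $U_{\alpha^{-1}}$. For example, if $y\in M_\alpha$ and $x\in U_\alpha$, then $\alpha^n(yxy^{-1})=\alpha^n(y)\alpha^n(x)\alpha^n(y)^{-1}\to 1$, because $\alpha^n(x)\to 1$ while $\{\alpha^n(y)\}_{n\in\N_0}$ is relatively compact; hence $yxy^{-1}\in U_\alpha$. The analogous computation handles $U_{\alpha^{-1}}$.

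Injectivity of $\pi$ then reduces to the intersection identities
\[
U_\alpha\cap M_\alpha\;=\;\{1\}\,,\quad M_\alpha\cap U_{\alpha^{-1}}\;=\;\{1\}\,,\quad U_\alpha\cap M_\alpha U_{\alpha^{-1}}\;=\;\{1\}\,.
\]
If $x_1y_1z_1=x_2y_2z_2$, then $(x_2)^{-1}x_1=y_2z_2z_1^{-1}y_1^{-1}=(y_2y_1^{-1})\,(y_1z_2z_1^{-1}y_1^{-1})\in M_\alpha U_{\alpha^{-1}}$ by the normalization property, so the third identity forces $x_1=x_2$; then a further appeal to the second identity finishes the argument. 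The three intersections themselves follow from standard asymptotic reasoning: an element common to $U_\alpha$ and $M_\alpha U_{\alpha^{-1}}$ contracts under $\alpha^n$ while having an $M_\alpha$-part of relatively compact orbit, forcing its $U_{\alpha^{-1}}$-part into $M_\alpha$ as well, and hence into $M_\alpha\cap U_{\alpha^{-1}}=\{1\}$.

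The main obstacle is the openness of $\pi$ (equivalently, openness of the image at points other than~$1$). Given $g_0=x_0y_0z_0$ and $g$ close to $g_0$, write $z_0^{-1}y_0^{-1}x_0^{-1}g=abc$ via the local BaW decomposition on~$V$, with $a,b,c$ small. The task is to rewrite
\[
g\;=\;x_0y_0z_0\,a\,b\,c
\]
in the form $xyz$. I would first conjugate $a$ from $U_\alpha$ through $z_0\in U_{\alpha^{-1}}$: since $z_0az_0^{-1}$ is close to~$1$, it lies in $V$ and admits its own $(a',b',c')$-decomposition by the BaW homeomorphism; then the normalization of $U_\alpha,M_\alpha,U_{\alpha^{-1}}$ by~$M_\alpha$ (applied with the conjugation by $y_0$) lets me sweep all factors through $y_0$ and collect them on the correct sides of $x_0,y_0,z_0$. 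Iterating this rearrangement a finite number of times produces $(x,y,z)$ close to $(x_0,y_0,z_0)$ with $\pi(x,y,z)=g$, which simultaneously establishes that $U_\alpha M_\alpha U_{\alpha^{-1}}$ is open in~$G$ and that $\pi^{-1}$ is continuous. The delicate point is to verify that the successive conjugations keep all newly introduced factors small, which is precisely where one uses the local homeomorphism on~$V$ and continuity of conjugation; the correct choice of the initial neighbourhood of $g_0$ is dictated by these quantitative estimates.
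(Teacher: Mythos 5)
The paper offers no proof of this proposition at all: it is quoted from part~(f) of the theorem in \cite{TID}, which in turn rests on the structure theory of \cite{BaW}, so there is no in-text argument to compare yours against. Your proposal is a correct reconstruction of exactly that route: reduce to the homeomorphic factorization $V=(V\cap U_\alpha)(V\cap M_\alpha)(V\cap U_{\alpha^{-1}})$ of a tidy subgroup, use that $M_\alpha$ normalizes $U_\alpha$ and $U_{\alpha^{-1}}$ to transport the local picture to an arbitrary point $x_0y_0z_0$, and read off injectivity from the trivial pairwise intersections. Two points should be sharpened. First, the identities $U_\alpha\cap M_\alpha=\{1\}$ and $M_\alpha\cap U_{\alpha^{-1}}=\{1\}$ do \emph{not} follow from ``standard asymptotic reasoning'': the shift example of~7.1 has $\{1\}\neq U_\alpha\subseteq M_\alpha=G$, so the hypothesis that $U_\alpha$ is closed must enter somewhere. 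The first identity is precisely \cite[Theorem~3.32]{BaW} for metrizable~$G$, and the second needs the further equivalence from that theorem that closedness of $U_\alpha$ forces closedness of $U_{\alpha^{-1}}$ (both being equivalent to triviality of the nub); this is the one genuinely non-trivial input and should be cited rather than ``derived''. Your reduction of $U_\alpha\cap M_\alpha U_{\alpha^{-1}}=\{1\}$ to these two is fine. Second, the rearrangement in the openness step needs no iteration: writing $g=x_0y_0z_0\,abc$ and $z_0az_0^{-1}=a'b'c'$, one gets in a single pass
\[
g\;=\;x_0\,(y_0a'y_0^{-1})\cdot(y_0b'b)\cdot(b^{-1}c'z_0b)\,c\,,
\]
where each bracketed factor lies in the correct subgroup by the normalization property, depends continuously on~$g$ (via continuity of the inverse of the local factorization at~$1$ and of conjugation by the fixed elements $y_0,z_0$), and tends to the corresponding coordinate of $(x_0,y_0,z_0)$ as $g\to g_0$. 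With these two points made explicit, your argument is complete and matches the intended proof via \cite{BaW} and \cite{TID}.
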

\begin{proof}
If $U_\alpha$ is closed, then every identity neighbourhood
in $G$ contains a compact open subgroup tidy for~$\alpha$
(see \cite{BaW} if $G$ is metrizable;
the results from \cite{Jaw}
can be used to remove the metrizability condition).
Thus $\alpha$ is ``tidy'' in the terminology of
\cite{TID}. Therefore, the proposition
is covered by part (f) of the theorem
in \cite{TID}
(the proof of which heavily uses results from \cite{BaW}).
\end{proof}
\noindent
We conclude the section with an example for the calculations
in Sections~\ref{secpre5}--\ref{sec6}.
\begin{example}
We consider the $p$-adic Lie group $G:=\GL_2(\Q_p)$
and its inner automorphism $\alpha\colon A\mto gAg^{-1}$
given by
\[
g:=\left(\begin{array}{cc}
1 & 0 \\
0 & p
\end{array}\right).
\]
For
\[
A=\left(\begin{array}{cc}
a & b \\
c & d
\end{array}\right)\in G,
\]
we have
\[
g^nAg^{-n}=
\left(\begin{array}{cc}
a & p^{-n}b \\
p^nc & d
\end{array}\right)
\]
for all $n\in \Z$, entailing that $M_\alpha\sub G$ is the subgroup
of all invertible diagonal matrices,
\[
U_\alpha=\left\{
\left(\begin{array}{cc}
1 & 0 \\
c & 1
\end{array}\right)\colon c\in \Q_p\right\}
\quad\mbox{and}\quad
U_{\alpha^{-1}}=\left\{
\left(\begin{array}{cc}
1 & b \\
0 & 1
\end{array}\right)\colon b\in \Q_p\right\}.
\]
Since $G$ is an open subset of the space $M_2(\Q_p)$
of $p$-adic $(2\times 2)$-matrices, we can identify
the tangent space $L(G)$ at the identity element with
$M_2(\Q_p)$ (as usual). Then $L(\alpha)$ corresponds
to the linear automorphism
\[
\beta\colon M_2(\Q_p)\to M_2(\Q_p),\quad
\left(\begin{array}{cc}
a & b \\
c & d
\end{array}\right)\mto
\left(\begin{array}{cc}
a & p^{-1}b \\
pc & d
\end{array}\right).
\]
Note
that the matrix units $E_{j,k}$ with only one
non-zero-entry, $1$,
in the $j$th row and $k$th column, form a basis of $\beta$-eigenvectors
for $M_2(\Q_p)$
with eigenvalues $p$ and $p^{-1}$ (of multiplicity one)
and $1$ as an eigenvalue of multiplicity~$2$.
Thus
\[
s_G(\alpha)=s_{L(G)}(\beta)=|1|\cdot |1|\cdot |p^{-1}|=p,
\]
by Theorem~\ref{scainft} and Proposition~\ref{proptdlin}.
It is clear from the preceding decomposition in eigenspaces
that the maximum-norm on $M_2(\Q_p)$,
$\|A\|_\infty:=\max\{|a|,|b|,|c|,|d|\}$ for $A$
as above, is adapted to~$\beta$.
Therefore
\[
B_r:=\{A\in M_2(\Q_p)\colon \|A\|_\infty<r\}
\]
is tidy for~$\beta$ for each $r>0$
(see Proposition~\ref{proptdlin}).
As the matrix exponential function
\[
\exp\colon U\to \GL_2(\Q_p)
\]
(defined on some open $0$-neighbourhood $U\sub M_2(\Q_p)$)
is an exponential function for~$G$ (see \cite{Bo2}),
we deduce with Theorem~\ref{scainft} that
\[
V_r:=\exp(B_r)
\]
is a tidy subgroup for~$\alpha$ for all sufficiently small $r>0$.
We mention that $V_r=\one+B_r$ coincides
with the ball of radius~$r$ around the identity matrix
for small~$r$, by Proposition~\ref{invfct},
using that the derivative $\exp'(0)=\id_{M_2(\Q_p)}$
is an isometry of $(M_2(\Q_p),\|.\|_\infty)$.
\end{example}
\section{Pathologies in positive characteristic}\label{secpatho}
Most of our discussion of automorphisms
of $p$-adic Lie groups becomes
false for Lie groups over local fields
of positive characteristic
(without extra
assumptions).
Suitable extra assumptions will
be described later.
For the moment, let
us have a look at some of the possible pathologies.\footnote{Compiled
from \cite{SPO}.}
\subsection{Non-closed contraction groups}\label{ncgp}
We describe an analytic automorphism
of an analytic Lie group over a local field
of positive characteristic,
with a non-closed contraction group.\\[2.1mm]
Given a finite field~$\F$
with prime order $|\F|=p$,
consider the compact group
$G:=\F^\Z$ and the right shift
\[
\alpha\colon G\to G\,, \quad \alpha(f)(n):=f(n-1)\,.
\]
Since the sets $\{f\in \F^\Z \colon f(-n)=f(-n+1)=\cdots=f(n-1)=f(n)=0\}$
(for $n\in \N$) form
a basis of $0$-neighbourhoods in~$G$,
it is easy to see
that the contraction group
$U_\alpha$ consists
exactly of the functions
with support bounded below,
i.e.,
\[
U_\alpha\;=\; \F^{(-\N)}\times \F^{\N_0}\,.
\]
This is a dense, non-closed subgroup
of~$G$. Also note that $G$ does not have\linebreak
arbitrarily small
subgroups tidy for~$\alpha$:
in fact, $G$ is the only
tidy subgroup.\\[2.1mm]
We now observe that $G$ can be considered
as a $2$-dimensional Lie group
over $\K:=\F(\!(X)\!)$:
The map
\[
G\to \F[\hspace*{-.17mm}[X]\hspace*{-0.16mm}]\times
\F[\hspace*{-.17mm}[X]\hspace*{-.16mm}]\,,\quad
f\mto \left(\sum_{n=1}^\infty f(-n)X^{n-1},\,
\sum_{n=0}^\infty f(n)X^n\right)
\]
is a global chart.
The automorphism of $\F[\hspace*{-.17mm}[X]\hspace*{-.16mm}]^2$
corresponding to~$\alpha$
coincides on the open $0$-neighbourhood
$X \F[\hspace*{-.17mm}[X]\hspace*{-.16mm}]\times
\F[\hspace*{-.17mm}[X]\hspace*{-.16mm}]$
with the linear map
\[
\beta\colon \K^2\to \K^2\,,\quad
\beta(v,w)=(X^{-1}v,Xw)\,.
\]
Hence $\alpha$ is an analytic
automorphism.
\subsection{An automorphism
whose scale cannot be\\
calculated on the Lie algebra level}\label{notlc}
We retain $G$, $\alpha$ and $\beta$ from the preceding example.\\[2mm]
Since $G$ is compact, we have $s_G(\alpha)=1$.
However, $L(\alpha)=\beta$
and therefore\linebreak
$s_{L(G)}(L(\alpha))=s_{\K(\!(X)\!)^2}(\beta)=|X^{-1}|=p$.
Thus
\[
s_G(\alpha)\;\not=\; s_{L(G)}(L(\alpha))\,,
\]
in stark contrast to the $p$-adic case,
where equality did always hold\,!
\subsection{An automorphism
which is not locally linear in any chart}\label{nln}
For $\F$, $\K$ and $p$ as before,
consider the $1$-dimensional
$\K$-analytic Lie group
$G:=\F[\hspace*{-.17mm}[X]\hspace*{-.16mm}]$.
Then
\[
\alpha\colon G\to G\,,\quad
z\mto z+ Xz^p
\]
is an analytic automorphism of~$G$
with $\alpha'(0)=\id$.
If $\alpha$ could be
locally
linearized, we could find a diffeomorphism
$\phi\colon U\to V$ between open $0$-neighbourhoods\linebreak
$U,V\sub G$ such that $\phi(0)=0$ and
\begin{equation}\label{givecontr}
\alpha\circ \phi|_W=\phi\circ \beta|_W
\end{equation}
for some linear map $\beta\colon \K\to\K$
and $0$-neighbourhood $W\sub G$.
Then $\phi'(0)=\alpha'(0)\circ
\phi'(0)=\phi'(0)\circ \beta$,
and hence $\beta=\id$.
Substituting this into~(\ref{givecontr}),
we deduce that $\alpha|_{\phi(W)}=\id_{\phi(W)}$,
which is a contradiction.
\section{Tools from non-linear analysis: invariant\\
\hspace*{.1mm}manifolds}\label{sec9}
As an automorphism $\alpha$ of a Lie group $G$
over a local field of positive characteristic
need not be locally linear
(see~\ref{nln}),
one has to resort to techniques
from non-linear analysis to
understand the iteration of such an automorphism.
The essential tools
are stable manifolds and centre
manifolds around a fixed point,
which are well-known tools
in the classical case of dynamical systems
on real manifolds.
The analogy is clear:
We are dealing with the (time-) discrete
dynamical system $(G,\alpha)$,
and are interested in its behaviour around
the fixed point~$1$.\\[2.1mm]
We shall use the following
terminology (cf.\ \cite{Bo2} and \cite{Ser}):
A subset $N$ of an $m$-dimensional
analytic manifold $M$ is called an $n$-dimensional
\emph{submanifold} of~$M$ if for each
$x\in N$, there is an analytic diffeomorphism
$\phi=(\phi_1,\ldots, \phi_m)
\colon U\to V$ from an open $x$-neighbourhood
$U\sub M$ onto an open subset $V\sub \K^m$
such that $\phi(U\cap N)=V\cap (\K^n\times \{0\})\sub
\K^n\times \K^{m-n}$. Then the maps $(\phi_1,\ldots,\phi_n)|_{U\cap N}$
form an atlas of charts for an analytic manifold structure on~$N$.
An \emph{immersed submanifold} of~$M$
is a subset $N\sub M$, together with an analytic manifold structure on~$N$
for which the inclusion map $\iota\colon N\to M$ is an analytic immersion
(i.e., an analytic map whose tangent maps
$T_x(\iota)\colon T_xN\to T_xM$ are injective for all $x\in N$).
If a subgroup $H$ of an analytic Lie group~$G$ is a submanifold,
then the inherited analytic manifold structure turns it into an analytic
Lie group. A subgroup $H$ of~$G$, together with an analytic
Lie group structure on~$H$, is called an \emph{immersed
Lie subgroup} of~$G$, if the latter turns the inclusion map
$H\to G$ into an immersion.\\[2.1mm]
To discuss stable manifolds in the ultrametric case,
we consider the following\linebreak
setting:
$(\K,|.|)$
is a complete
ultrametric field and
$M$ a finite-dimensional
analytic manifold over~$\K$, of dimension~$m$.
Also, $\alpha \colon M\to M$ is a given
analytic diffeomorphism
and $z \in M$ a fixed point of~$\alpha$.
We let $a\in \; ]0,1] \setminus R(T_z(\alpha))$,
using notation as in Section~{\rm\ref{secitlin}}.\\[2.1mm]
Now define $W^s_a(\alpha,z)$ as the set
of all $x\in M$
with the following property:\footnote{The letter ``$s$'' in $W^s_a(\alpha,z)$
stands for ``stable.''}
For some (and hence each)
chart $\phi\colon U\to V\sub \K^m$ of~$M$
around~$z$ such that $\phi(z)=0$,
and some (and hence each)
norm~$\|.\|$ on~$\K^m$,
there exists $n_0\in \N$
such that $\alpha^n(x)\in U$
for all integers $n\geq n_0$ and
\begin{equation}\label{asymp}
\lim_{n\to\infty} \frac{\|\phi(\alpha^n(x))\|}{a^n}\;=\; 0\,.
\end{equation}
It is clear from the definition
that $W^s_a(\alpha,z)$ is
an $\alpha$-stable subset of~$M$.
The following fact is obtained
if we combine \cite[Theorem 1.3]{STN} and
\cite[Theorem A]{STF} (along with its proof):
\begin{thm}\label{ultrasat}
For each $a\in \; ]0,1] \setminus R(T_z(\alpha))$,
the set $W^s_a(\alpha,z)$ is an immersed submanifold
of~$M$.
Its tangent space at~$z$ is
$\bigoplus_{\rho<a}T_z(M)_\rho$,
using notation as in {\rm Lemma~\ref{lemschlem}}
with
$\beta:=T_z(\alpha)\colon T_z(M)\to T_z(M)$
in place of~$\alpha$
and $E:=T_z(M)$.
\end{thm}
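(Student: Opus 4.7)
The plan is to prove the theorem locally at $z$ via a Lyapunov--Perron fixed-point argument, then extend to the full immersed submanifold by $\alpha$-invariance. First I would fix a $C^k$-chart $\phi\colon U\to V\sub E:=T_z(M)$ with $\phi(z)=0$, equip $E$ with a norm adapted to $\beta:=T_z(\alpha)$ (Proposition~\ref{propadapt}), and write the local representation of $\alpha$ as $\tilde\alpha = \beta + R$, where $R\in C^k$ satisfies $R(0)=0$ and $R'(0)=0$. Since $a\notin R(\beta)$ there is a genuine spectral gap, so we split
\[
E \;=\; E_s \oplus E_u, \qquad E_s \,:=\bigoplus_{\rho<a}E_\rho,\quad E_u\,:=\bigoplus_{\rho>a}E_\rho .
\]
By {\bf A2} and {\bf A3}, $\beta$ respects this decomposition with $\|\beta(x_s)\|<a\|x_s\|$ and $\|\beta(x_u)\|>a\|x_u\|$ exactly (not up to an $\varepsilon$), which is the usual ultrametric simplification of the real case.

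Next I would construct a local stable manifold as the graph of a $C^k$-map $\sigma\colon B^{E_s}_r(0)\to E_u$ with $\sigma(0)=0$ and $\sigma'(0)=0$. The natural setup is to work in the Banach space
\[
\ell^0_a \,:=\,\Big\{(x_n)_{n\geq 0}\in E^{\N_0}\colon \|x_n\|/a^n\to 0\Big\},\quad \|(x_n)\|:=\sup_n \|x_n\|/a^n ,
\]
and seek, for initial data $\xi\in E_s$ near $0$, a solution of
\[
x_0 = \xi + \sigma(\xi), \qquad x_{n+1}=\beta(x_n)+R(x_n),\qquad (x_n)\in \ell^0_a .
\]
Because the linearization at $(\xi,(x_n))=(0,0)$ decouples into $\beta|_{E_s}$ (expanding slower than $a$) and $\beta|_{E_u}$ (expanding faster than $a$), the associated operator $F(\xi,(x_n))$ built from this recurrence and the $E_s\oplus E_u$ splitting has invertible partial differential in $(x_n)$. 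The ultrametric inverse (and hence implicit) function theorem with parameters (Proposition~\ref{invpar}), in its $C^k$ form from \cite{IMP}, then yields a unique $C^k$ solution $(x_n)=(x_n(\xi))$ depending on~$\xi$, and $\sigma(\xi)$ is the $E_u$-component of $x_0(\xi)-\xi$.

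The third step is to identify the graph of $\sigma$ with the germ of $W^s_a(\alpha,z)$ at $z$. Any point on the graph generates, by construction, a forward orbit in $\ell^0_a$, so it satisfies (\ref{asymp}) and lies in $W^s_a(\alpha,z)$. Conversely, any $x\in W^s_a(\alpha,z)$ has a tail $(\alpha^n(x))_{n\geq n_0}$ which lies in $\ell^0_a$ and solves the recurrence, so by uniqueness of $\sigma$ this tail lies on $\mathrm{graph}(\sigma)$. Hence
\[
W^s_a(\alpha,z)\;=\;\bigcup_{n\geq 0}\alpha^{-n}\bigl(\phi^{-1}(\mathrm{graph}(\sigma))\bigr),
\]
an ascending union of $C^k$-diffeomorphic copies of a local stable manifold; this gives the immersed $C^k$-submanifold structure. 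The tangent space at $z$ is $E_s=\bigoplus_{\rho<a}T_z(M)_\rho$ because $\sigma'(0)=0$ forces the graph to be tangent to $E_s$ at the origin.

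The main obstacle is the $C^k$-regularity of $\sigma$ (existence and continuity follow from an easy contraction argument, made cleaner by ``the strongest wins''). To obtain $C^k$ dependence on $\xi$ one has to choose the sequence space so that the superposition operator $(x_n)\mapsto(R(x_n))$ is $C^k$ on it and so that Proposition~\ref{invpar} applies; weighted $\ell^0_a$-type spaces adapted to the decomposition are the standard choice, but verifying differentiability of the composition operators and invertibility of the linearized equation uniformly on a neighbourhood of $0$ is the delicate point. A secondary issue is the globalization: one must check that the transition maps between the charts $\alpha^{-n}\circ\phi^{-1}$, restricted to $\mathrm{graph}(\sigma)$, are $C^k$-compatible, so that the resulting immersed (generally non-embedded) $C^k$-structure is well-defined.
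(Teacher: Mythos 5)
Your proposal follows essentially the same route as the paper: the paper's proof sketch is precisely Irwin's method, i.e.\ constructing the local $a$-stable manifold by applying the ultrametric implicit function theorem to the orbit sequences $(\alpha^n(x))_{n\in\N_0}$ in a Banach space of $a$-decaying sequences, and then globalizing via $W^s_a(\alpha,z)=\bigcup_{n\in\N_0}\alpha^{-n}(N)$. Your Lyapunov--Perron/graph-transform formulation, the splitting $E=E_s\oplus E_u$ using the adapted norm, and the identification of the tangent space via $\sigma'(0)=0$ are exactly the ingredients the paper attributes to \cite{Ir1} and its ultrametric adaptation in \cite{STA}.
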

\noindent
As in the real case, we call
$W^s_a(\alpha,z)$ the \emph{$a$-stable
manifold} around~$z$.\\[2.1mm]
{\bf Some ideas of the proof.}
The main point is to construct a local
$a$-stable manifold, i.e.,
an $\alpha$-invariant
submanifold~$N$ of~$M$ tangent to
$\bigoplus_{\rho<a}T_z(M)_\rho$
such that $\alpha|_N\colon N\to N$
is an analytic map.\footnote{Then\vspace{-.6mm}
$W^s_a(\alpha,z)
=\bigcup_{n\in \N_0}\alpha^{-n}(N)$,
which is easily made an analytic manifold in such a way that
$N$ becomes
an open subset and $\alpha$ restricts
to an analytic diffeomorphism
of $W^s_a(\alpha,z)$.}
In the real case, M.\,C. Irwin~\cite{Ir1}
showed how
the construction
of a (local) $a$-stable manifold
can be reduced to the implicit function
theorem, applied to a Banach space
of sequences
(cf.\ also~\cite{Wel}).\footnote{The idea is
to construct
not the points $x$ of the manifold,
but their orbits $(\alpha^n(x))_{n\in \N_0}$.}
Since implicit function theorems
are available also
for analytic mappings between ultrametric Banach spaces
(see \cite{Bo1}),
it is possible to adapt Irwin's
method to the ultrametric case
(see \cite{STN}).\,\Punkt\\[2.1mm]
Other classical types of invariant manifolds
are also available in the ultrametric case.
In particular:
\begin{numba}\label{existscentr}
For $M$ and $\alpha$ as before,
there always exists a
so-called
\emph{centre manifold},
i.e., an immersed analytic submanifold~$C$ of~$M$
which is tangent to $M_\beta\sub T_z(M)$
at~$z$, $\alpha$-stable
(i.e., $\alpha(C)=C$),
and such that the restriction $f|_C\colon C\to C$ is
analytic \cite[Theorem~1.10]{STN}.
By Proposition~\ref{invfct},
after shrinking~$C$ we may assume that~$C$
is diffeomorphic to a ball
and hence compact.
\end{numba}
\noindent
The neutral element $1$ of a Lie group $G$
is a fixed point of each automorphism $\alpha$ of $G$,
but it need not be a hyperbolic fixed point,
i.e., it may very well happen that
$1\in R(T_1(\alpha))$. Nonetheless,
it is always possible to
make $U_\alpha=W^s_1(\alpha,1)$ a manifold
(see \cite[Theorem~D]{STF}):
\begin{prop}\label{nonhyper}
Let $G$ be an analytic Lie group
over a complete valued field,
$\alpha\colon G\to G$ be an analytic automorphism
and $\beta:=L(\alpha)$ the associated Lie algebra automorphism
of $L(G)$.
Then $U_\alpha$
can be made an immersed Lie subgroup
modelled on $U_\beta=W^s_1(\beta,0)\sub L(G)$,
such that $\alpha|_{U_\alpha}\colon U_\alpha\to U_\alpha$
is an analytic automorphism and contractive.\,\Punkt
\end{prop}
\begin{proof}
The idea is to show that
$U_\alpha=W^s_a(\alpha,1)$ for
$a\in \,]0,1[\, \setminus R(\beta)$
close to~$1$.
Details can be found in~\cite{STF}.
\end{proof}
\noindent
Although it always is an immersed Lie subgroup,
$U_\alpha$ need not be a Lie subgroup (as the example in
Section~\ref{secpatho} shows).
\section{The scale, tidy subgroups
and contraction\\
groups in positive characteristic}\label{sec10}
We begin with a discussion
of the contraction group
for a well-behaved\linebreak
automorphism~$\alpha$
of a Lie group~$G$ over a local field,
and the associated local decomposition
of $G$ adapted to~$\alpha$.
Using the tools from non-linear analysis
just described, one obtains
the following
analogue of Theorem~\ref{Wa1}
(see \cite{SPO}):
\begin{thm}\label{Wa3}
Let $G$ be an analytic Lie group over a local field
and $\alpha\colon G\to G$ be an analytic automorphism.
Abbreviate $\beta:=L(\alpha)$.
If $U_\alpha$ is closed, then
$U_\alpha$, $M_\alpha$
and $U_{\alpha^{-1}}$
are closed Lie subgroups
of~$G$ with Lie algebras
\[
L(U_\alpha)=U_\beta\,,\quad
L(M_\alpha)=M_\beta
\quad\mbox{and}\quad
L(U_{\alpha^{-1}})=U_{\beta^{-1}}\,,
\]
respectively.
Moreover,
$U_\alpha M_\alpha U_{\alpha^{-1}}$
is an open identity neighbourhood in~$G$,
and the product map
\begin{equation}\label{nowfinly}
\pi\colon U_\alpha \times M_\alpha \times U_{\alpha^{-1}}
\to U_\alpha M_\alpha U_{\alpha^{-1}}\,,
\quad
(x,y,z)\mto xyz
\end{equation}
is an analytic diffeomorphism.
\end{thm}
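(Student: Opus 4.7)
The plan is to combine the stable and centre manifold constructions of Section~\ref{sec9} with the topological product decomposition from Proposition~\ref{propmetr}, using the hypothesis that $U_\alpha$ is closed as the essential bridge. The identification of $U_\alpha$ with a stable manifold (and analogous identifications for $U_{\alpha^{-1}}$ and $M_\alpha$) will supply the desired $C^k$-Lie subgroup structures and pin down their Lie algebras.

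First I pick $a\in(0,1)\setminus R(\beta)$ strictly greater than every element of $R(\beta)\cap(0,1)$, and dually $\tilde a\in(0,1)\setminus R(\beta^{-1})$ strictly greater than every element of $R(\beta^{-1})\cap(0,1)$. Theorem~\ref{ultrasat} applied to $\alpha$ and $\alpha^{-1}$ yields immersed, $\alpha$-stable $C^k$-submanifolds $S:=W^s_a(\alpha,1)$ and $\tilde S:=W^s_{\tilde a}(\alpha^{-1},1)$ of $G$ with tangent spaces $U_\beta$ and $U_{\beta^{-1}}$ at $1$, while~\ref{existscentr} provides a compact, $\alpha$-stable centre manifold $C$ tangent to~$M_\beta$. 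Observe $S\subseteq U_\alpha$ and $\tilde S\subseteq U_{\alpha^{-1}}$ (immediate from (\ref{asymp})), and $C\subseteq M_\alpha$ (since the $\alpha$-stable compact set $C$ contains the entire two-sided orbit of each of its points). By the sum decomposition $L(G)=U_\beta\oplus M_\beta\oplus U_{\beta^{-1}}$ of Proposition~\ref{linearcase}, the multiplication $\pi_0\colon S\times C\times\tilde S\to G$, $(s,c,\tilde s)\mapsto sc\tilde s$, has bijective differential at $(1,1,1)$, so the $C^k$-analogue of the ultrametric inverse function theorem produces open neighbourhoods $S_0\subseteq S$, $C_0\subseteq C$, $\tilde S_0\subseteq\tilde S$ of $1$ on which $\pi_0$ is a $C^k$-diffeomorphism onto an open $\Omega\subseteq G$; in particular $\Omega\subseteq U_\alpha M_\alpha U_{\alpha^{-1}}$.

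Now I invoke Proposition~\ref{propmetr}: since $G$ is metrizable and $U_\alpha$ is closed by hypothesis, the global product map $\pi\colon U_\alpha\times M_\alpha\times U_{\alpha^{-1}}\to U_\alpha M_\alpha U_{\alpha^{-1}}$ is a homeomorphism, hence injective. Comparing the local $\pi_0$-decomposition with the global $\pi$-decomposition on $\Omega$, and using $S_0\subseteq U_\alpha$, $C_0\subseteq M_\alpha$, $\tilde S_0\subseteq U_{\alpha^{-1}}$, injectivity of $\pi$ forces
\[
U_\alpha\cap\Omega = S_0,\quad M_\alpha\cap\Omega = C_0,\quad U_{\alpha^{-1}}\cap\Omega = \tilde S_0.
\]
This simultaneously endows $U_\alpha$, $M_\alpha$, $U_{\alpha^{-1}}$ with $C^k$-submanifold structures near $1$ and confirms the claimed tangent spaces.

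To globalize I exploit $\alpha$-stability: for $x\in U_\alpha$, some iterate $\alpha^n(x)$ lies in $\Omega\cap U_\alpha = S_0\subseteq S$, whence $x\in\alpha^{-n}(S)=S$; combined with $S\subseteq U_\alpha$ this gives $U_\alpha = S$ as immersed $C^k$-submanifolds of $G$, and symmetrically $U_{\alpha^{-1}} = \tilde S$. For $M_\alpha$ (closed by the remark after Definition~\ref{basecontr}), the $C^k$-submanifold structure near $1$ propagates to the whole group by left translations, producing a $C^k$-Lie group structure in which $M_\alpha\hookrightarrow G$ is a $C^k$-immersion. Finally, $\pi$ is $C^k$ between $C^k$-Lie groups and a local $C^k$-diffeomorphism at $(1,1,1)$; together with the global bijectivity from Proposition~\ref{propmetr} and a routine translation argument at each point of the domain, this upgrades $\pi$ to a global $C^k$-diffeomorphism. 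The main obstacle throughout is the matching step of the third paragraph: the stable-manifold picture and the contraction-group picture must be reconciled, which requires injectivity of $\pi$; this in turn needs closedness of $U_\alpha$, and the pathologies in Section~\ref{secpatho} show that this hypothesis cannot be dropped.
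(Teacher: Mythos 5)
Your proposal is correct and follows essentially the same route as the paper: stable manifolds for $U_\alpha$ and $U_{\alpha^{-1}}$ via Theorem~\ref{ultrasat}, a compact centre manifold contained in $M_\alpha$ via \ref{existscentr}, the inverse function theorem to get the local product decomposition at $(1,1,1)$, and Proposition~\ref{propmetr} (using closedness of $U_\alpha$ and metrizability) to match this with the group-theoretic decomposition, make $M_\alpha$ a Lie group by translation, and upgrade $\pi$ to a global $C^k$-diffeomorphism. The only notable difference is that you explicitly derive the identification $U_\alpha=W^s_a(\alpha,1)$ (and likewise for $U_{\alpha^{-1}}$) from the injectivity of $\pi$ plus $\alpha$-stability, whereas the paper simply asserts that $U_\alpha$ is the stable manifold; this is a welcome refinement of the same argument rather than a different approach.
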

\noindent
{\bf Some ideas of the proof.}
By Proposition~\ref{nonhyper},
$U_\alpha$ and $U_{\alpha^{-1}}$
are
immersed Lie subgroups of~$G$
with Lie algebras $U_\beta$ and $U_{\beta^{-1}}$,
respectively.
By \ref{existscentr},
there exists a compact centre manifold~$C$
for $\alpha$ around~$1$,
modelled on $M_\beta$.
Since $\alpha^n(C)=C$
for each $n\in \Z$, we have $C\sub M_\alpha$.
Now $L(G)=U_\beta\oplus M_\beta\oplus M_{\beta^{-1}}$
(see Proposition~\ref{linearcase}).
By the inverse function theorem,
the product map
\[
U_\alpha\times C\times U_{\alpha^{-1}}\to G
\]
is a local diffeomorphism at $(1,1,1)$.
Using Proposition~\ref{propmetr},
we deduce that $C$ is open in $M_\alpha$,
and now a standard argument
(Proposition~18 in \cite[Chapter~3, \S1, no.\,9]{Bo2})
can be used to make $M_\alpha$ a Lie
group.
Then
$\pi$ from (\ref{prodump})
is a local diffeomorphism
at $(1,1,1)$, and
one deduces as in the proof
of Part~(f) in the theorem
in \cite{TID}
that~$\pi$ is an analytic diffeomorphism onto its image.\,\Punkt
\begin{rem}
Even if $U_\alpha$ is not closed,
its stable manifold structure
makes it an immersed Lie subgroup
of~$G$, and $\alpha|_{U_\alpha}$
is a contractive auto\-mor\-phism
of this Lie group (see Proposition~\ref{nonhyper}).
Therefore Section~\ref{sec11}
below provides structural
information on~$U_\alpha$
(no matter whether it is closed or not).
\end{rem}
\noindent
Next, we discuss tidy subgroups
and the scale for well-behaved
automorphisms of Lie groups
over local fields.\\[2.1mm]
Let $G$ be an analytic Lie group
over a local field,
and $\alpha$ be an analytic automorphism of~$G$.
Let $\|.\|$ be a norm on $L(G)$
adapted to $\beta:=L(\alpha)$
and $\phi\colon U\to V\sub L(G)$
a chart of~$G$ around~$1$ such that
$\phi(1)=0$ and $T_1\phi=\id_{L(G)}$.
We know from Lemma~\ref{laballs}
that $V_r:=\phi^{-1}(B_r(0))$
is a subgroup of $G$ if $r$ is small (where $B_r(0)\sub L(G)$). 
Then the following holds:
\begin{thm}\label{thmposca}
If $U_\alpha$ is closed,
then the subgroup $V_r$ of~$G$ $($as just defined$)$
is tidy for~$\alpha$, for each sufficiently small $r>0$.
Moreover,
\begin{equation}\label{hasanag}
s_G(\alpha)\;=\; s_{L(G)}(L(\alpha))\,,
\end{equation}
where $s_{L(G)}(L(\alpha))$ can be calculated as in
Proposition~{\rm\ref{proptdlin}}.
If $U_\alpha$ is not closed,
then $s_G(\alpha)\not=s_{L(G)}(L(\alpha))$;
more precisely, $s_G(\alpha)$
is a proper
divisor of $s_{L(G)}(L(\alpha))$.
\end{thm}
\noindent
{\bf Sketch of proof.} We explain why $V_r$ is tidy.
Combining Theorem~\ref{Wa3}
with the Ultrametric Inverse Function Theorem,
we obtain a diffeomorphic decomposition
\begin{equation}\label{diffdeco}
V_r\;=\; (V_r\cap U_\alpha) (V_r\cap M_\alpha)
(V_r\cap U_{\alpha^{-1}})
\end{equation}
for small~$r$.
Since $\beta$ takes $B_r(0)\cap U_\beta$
inside $B_{r\|\beta|_{U_\beta}\|_{\op}}(0)\cap U_\beta$
for each $r$ (where $\|\beta|_{U_\beta}\|_{\op}<1$),
the Ultrametric Inverse Function Theorem
implies that
\[
\bigcap_{n\in \N_0}\alpha^n(V_r\cap U_\alpha)=\{1\}
\]
for all small $r>0$.
Similarly, $\beta(B_r(0)\cap M_\beta)=B_r(0)\cap M_\beta$
and $\beta(B_r(0)\cap U_{\beta^{-1}})\supseteq B_r(0)\cap U_{\beta^{-1}}$
imply that
\[
\bigcap_{n\in \N_0}\alpha^n(V_r\cap M_\alpha)=V_r\cap M_\alpha\quad
\mbox{and}\quad
\bigcap_{n\in \N_0}\alpha^n(V_r\cap U_{\alpha^{-1}})=V_r\cap U_{\alpha^{-1}}
\]
for small~$r$.
Combining this information with~(\ref{diffdeco})
and (\ref{nowfinly}),
we see that
\[
(V_r)_+\; :=\; \bigcap_{n\in \N_0}\alpha^n(V_r)=
(V_r\cap M_\alpha)(V_r\cap  U_{\alpha^{-1}})\,.
\]
As a consequence,
$(V_r)_+$ has the Lie algebra $M_\beta\oplus U_{\beta^{-1}}$.
Likewise,
$(V_r)_-=(V_r\cap U_\alpha)(V_r\cap M_\alpha)$
and $L((V_r)_-)=U_\beta\oplus M_\beta$.
Hence $V_r=(V_r)_+(V_r)_-$,
by~(\ref{diffdeco}).
Thus $V_r$ is tidy above
(as in \ref{rcasca}).
Since $(V_r)_{++}= (V_r\cap M_\alpha)U_{\alpha^{-1}}$
as a consequence of
(\ref{nowfinly}) and the $\alpha$-stability of $V_r\cap M_\alpha$,
we see that $(V_r)_{++}$ is closed
(whence $V_r$ is also tidy below and hence tidy).
For the remaining assertions, see~\cite{SPO}.\,\Punkt
\begin{rem}
At a first glance, the discussion
of the scale and tidy subgroups
for automorphisms of Lie groups
are easier in the $p$-adic case
than in positive
characteristic because any
automorphism is locally linear
in the $p$-adic case.
But a more profound difference
is the automatic validity of
the tidiness property {\bf TB} for automorphisms
of $p$-adic Lie groups,
which becomes false in
positive characteristic.\footnote{Note that also the pathological
automorphism
$\alpha$ in Section~\ref{secpatho}
was locally linear.}
\end{rem}
\noindent
As observed in~\cite{SPO},
results from~\cite{BaW} imply
the following closedness criterion for~$U_\alpha$,
which
is frequently easy
to check.
\begin{prop}\label{gencdclo}
Let $G$ be a totally
disconnected, locally
compact group
and $\alpha$ be an automorphism
of~$G$. If
there exists an injective continuous homomorphism
$\phi\colon G\to H$ to
a totally disconnected, locally compact
group~$H$ and an automorphism
$\beta$ of $H$ such that $U_\beta$ is closed
and $\beta\circ \phi=\phi\circ\alpha$,
then also $U_\alpha$ is closed.
\end{prop}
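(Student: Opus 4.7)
Plan for proving Proposition~\ref{gencdclo}.

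The starting point is the equivariance $\phi\circ\alpha = \beta\circ\phi$, which iterates to $\phi(\alpha^n(x)) = \beta^n(\phi(x))$ for all $x\in G$ and $n\in\Z$. By continuity of $\phi$, this immediately gives $\phi(U_\alpha)\subseteq U_\beta$, hence $U_\alpha\subseteq\phi^{-1}(U_\beta)$; and since $U_\beta$ is closed in $H$ by hypothesis and $\phi$ is continuous, the set $\widetilde U:=\phi^{-1}(U_\beta)$ is a closed, $\alpha$-stable subgroup of $G$ containing $U_\alpha$. Establishing these containments and the closedness of $\widetilde U$ is the formal part of the argument.

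To upgrade this to closedness of $U_\alpha$ itself, I would argue sequentially using metrizability of $G$: take a sequence $(x_m)\subseteq U_\alpha$ with $x_m\to x$ in $G$ and show that $x\in U_\alpha$. From the previous step, $\phi(x)\in U_\beta$, so $\phi(\alpha^n(x)) = \beta^n(\phi(x))\to 1$ in $H$ as $n\to\infty$. The plan is now to pull this back to $G$: if one knows that the tail orbit $(\alpha^n(x))_{n\geq N}$ is relatively compact in $G$, then every subsequential limit $y\in G$ satisfies $\phi(y)=1$ by continuity, hence $y=1$ by injectivity of $\phi$, so that $1$ is the unique accumulation point of $(\alpha^n(x))$ and thus $\alpha^n(x)\to 1$ in $G$, i.e.\ $x\in U_\alpha$.

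The main obstacle is precisely the relative compactness of the orbit tail in $G$: since $\phi$ is a continuous injection but need not be an embedding, convergence of $(\phi(\alpha^n(x)))$ in $H$ does not by itself confine $(\alpha^n(x))$ to a compact region of $G$, and a naive approach using $\phi^{-1}$ of a small neighbourhood of $1\in H$ fails because such preimages need not be relatively compact in $G$. To overcome this I would invoke the structural results of \cite{BaW} (as made available in the metrizable totally disconnected setting, cf.\ the theorem cited from \cite{TID}): closedness of $U_\beta$ in $H$ is equivalent to a tidiness/decomposition criterion involving compact open subgroups of $H$ tidy for $\beta$ together with the parabolic and Levi components. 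Transporting this criterion through the equivariant injection $\phi$—by intersecting $\phi^{-1}$ of tidy subgroups for $\beta$ with suitably chosen compact open subgroups of $G$ (available by van Dantzig and metrizability)—produces compact open subgroups of $G$ whose dynamics under $\alpha$ force the required orbit confinement.

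Once this relative compactness is established, the rest of the argument is the formal injectivity/continuity conclusion described in the second paragraph, completing the proof that $U_\alpha$ is closed in $G$.
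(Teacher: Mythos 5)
There is a genuine gap at the centre of your argument. Everything up to and including ``$\phi(x)\in U_\beta$, hence $\beta^n(\phi(x))\to 1$'' is fine, and so is the observation that relative compactness of the tail orbit $(\alpha^n(x))_{n\geq N}$ in $G$ would finish the proof (unique accumulation point $1$, by injectivity of $\phi$). But that relative compactness is precisely the hard part, and your third paragraph does not prove it: it only asserts that ``transporting'' a tidiness criterion through $\phi$ ``produces compact open subgroups of $G$ whose dynamics under $\alpha$ force the required orbit confinement.'' As you yourself note, $\phi$ is merely a continuous injection, so $\phi^{-1}$ of a compact open subgroup of $H$ is open and closed but in general not compact, and intersecting it with an arbitrary compact open subgroup of $G$ destroys any $\alpha$-invariance or tidiness it might have had; nothing in the sketch explains how the $\alpha$-dynamics on $G$ is controlled by the $\beta$-dynamics on $H$ at the level of subgroups. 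In effect you would have to redo a substantial portion of the structure theory of \cite{BaW} inside $G$, and the proposal gives no indication of how.

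The missing idea is to bring the Levi factor on the $G$-side into play and to use the closedness criterion of \cite[Theorem~3.32]{BaW} directly: for a metrizable totally disconnected, locally compact group, $U_\alpha$ is closed if and only if $U_\alpha\cap M_\alpha=\{1\}$. This is how the paper argues: closedness of $U_\beta$ gives $U_\beta\cap M_\beta=\{1\}$; equivariance and continuity give not only $\phi(U_\alpha)\subseteq U_\beta$ (which you used) but also $\phi(M_\alpha)\subseteq M_\beta$ (since $\phi$ maps relatively compact two-sided orbits to relatively compact ones); injectivity of $\phi$ then yields $U_\alpha\cap M_\alpha=\{1\}$, and Theorem~3.32 applied to the metrizable group $G$ gives closedness of $U_\alpha$. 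Note that this route never attempts to pull compactness back through $\phi$ --- which is exactly the step your plan cannot supply --- and it explains why metrizability of $G$ is the hypothesis that matters.
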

\begin{proof}
It is known that $U_\beta\cap M_\beta=\{1\}$
if and only if $U_\beta$ is closed
(see \cite[Theorem~3.32]{BaW} if $H$ is metrizable;
the metrizability condition can be removed
using techniques from \cite{Jaw}),
which holds by hypothesis.
Since
$\phi(U_\alpha)\sub U_\beta$
and $\phi(M_\alpha)\sub M_\beta$,
the injectivity of~$\phi$ entails that
$U_\alpha\cap M_\alpha=\{1\}$.
Hence $U_\alpha$ is closed,
using \cite[Theorem~3.32]{BaW} again.
\end{proof}
\noindent
Since $U_\beta\cap M_\beta=\{1\}$
holds for each inner automorphism
of $\GL_n(\K)$ (compare\linebreak
Proposition~\ref{linearcase}),
the preceding proposition immediately
entails:
\begin{prop}\label{giveclsd}
If a totally disconnected,
locally compact group $G$ admits an injective,
continuous homomorphism into $\GL_n(\K)$
for some $n\in \N$ and some local field~$\K$,
then $U_\alpha$ is closed in~$G$
for each inner automorphism~$\alpha$
of~$G$.\,\Punkt
\end{prop}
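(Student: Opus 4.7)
The plan is to apply Proposition~\ref{gencdclo} with $H := \GL_n(\K)$. Given an inner automorphism $\alpha$ of $G$, say $\alpha(x) = gxg^{-1}$ for some $g \in G$, I would set $A := \phi(g)$ and let $\beta \colon \GL_n(\K) \to \GL_n(\K)$, $\beta(B) := ABA^{-1}$, be conjugation by $A$. Then $\beta$ is continuous and $\beta \circ \phi = \phi \circ \alpha$ by construction. Since $\GL_n(\K)$ is second countable and hence metrizable, the conclusion will follow from Proposition~\ref{gencdclo} once it is shown that $U_\beta$ is closed in $\GL_n(\K)$.

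As in the proof of Proposition~\ref{gencdclo}, invoking \cite[Theorem~3.32]{BaW} reduces closedness of $U_\beta$ to the assertion $U_\beta \cap M_\beta = \{\one\}$. To verify this, I would linearize by extending $\beta$ to the $\K$-linear automorphism $T \colon M_n(\K) \to M_n(\K)$, $T(X) := AXA^{-1}$, of the $n^2$-dimensional $\K$-vector space $M_n(\K)$. Since $T$ is linear and $T(\one) = \one$, one has
\[
T^k(B) - \one \; = \; T^k(B - \one) \quad \text{for all } B \in \GL_n(\K) \text{ and } k \in \Z.
\]
Because $\GL_n(\K)$ carries the subspace topology from $M_n(\K)$, the condition $\beta^k(B) \to \one$ in $\GL_n(\K)$ is equivalent to $T^k(B - \one) \to 0$ in $M_n(\K)$, and relative compactness of the two-sided orbit $\{\beta^k(B) \colon k \in \Z\}$ in $\GL_n(\K)$ forces relative compactness of $\{T^k(B - \one) \colon k \in \Z\}$ in $M_n(\K)$. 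Hence $U_\beta \sub \one + U_T$ and $M_\beta \sub \one + M_T$. By Proposition~\ref{linearcase} applied to the linear automorphism $T$, one has $U_T \cap M_T = \{0\}$, so
\[
U_\beta \cap M_\beta \; \sub \; \one + (U_T \cap M_T) \; = \; \{\one\},
\]
and the reverse inclusion is trivial. Thus $U_\beta$ is closed, and Proposition~\ref{gencdclo} yields closedness of $U_\alpha$.

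The substantive step is the linearization: the fact that $\one$ is a fixed point of $T$ allows the affine translation $B \mapsto B - \one$ to transport the multiplicative dynamics of $\beta$ near $\one$ in $\GL_n(\K)$ into the purely linear dynamics of $T$ on $M_n(\K)$, where Proposition~\ref{linearcase} applies directly. I do not expect any genuine obstacle beyond checking that the topological embedding $\GL_n(\K) \hookrightarrow M_n(\K)$ is compatible with the relevant notions of convergence and relative compactness, which is routine since $\GL_n(\K)$ is open in $M_n(\K)$ and compact subsets of the Hausdorff space $M_n(\K)$ that happen to lie in $\GL_n(\K)$ are the same as the compact subsets of $\GL_n(\K)$.
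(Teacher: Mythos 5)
Your proposal is correct and follows essentially the same route as the paper: the paper likewise applies Proposition~\ref{gencdclo} to the embedding into $H=\GL_n(\K)$ and justifies $U_\beta\cap M_\beta=\{\one\}$ for inner automorphisms of $\GL_n(\K)$ by reference to the linear case (Proposition~\ref{linearcase}). Your explicit linearization $T^k(B)-\one=T^k(B-\one)$ just spells out the step the paper leaves as a ``cf.''
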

\begin{rem}
In particular, $U_\alpha$ is closed
for each inner automorphism
of a closed subgroup $G$ of the
general linear group $\GL_n(\K)$
over a local field~$\K$,
as already observed in~\cite[Remark~3.33\,(3)]{BaW}.
Also an analogue of~(\ref{hasanag})
can already be found in~\cite[Proposition~3.23]{BaW},
in the special case of Zariski connected
reductive linear algebraic groups
defined over~$\K$.
While our approach is analytic,
the special case is discussed in~\cite{BaW}
using methods
from the theory of linear algebraic groups.
\end{rem}
\begin{rem}\label{nonlinea}
Let $\K=\F(\!(X)\!)$, $G$ and its automorphism~$\alpha$
be as in~\ref{ncgp},
and $H:=G \rtimes \langle\alpha\rangle$.
Then $H$ is a $2$-dimensional
$\K$-analytic Lie group,
and conjugation with $\alpha$
restricts to $\alpha$ on~$G$.
In view of
the considerations in~\ref{ncgp},
we have found an inner automorphism of~$H$
with a non-closed contraction group.
By Proposition~\ref{giveclsd},
$H$ is not a linear Lie group,
and more generally it does not admit a faithful
continuous linear representation over a local
field.
\end{rem}
\noindent
For recent studies of linearity
questions on the level of pro-$p$-groups,
the reader is referred to \cite{ANS}, \cite{BaL}, \cite{CdS},
\cite{JZ}, \cite{JZK}, \cite{LaS}
and the references therein.
\section{The structure of contraction groups
in the case of positive characteristic}\label{sec11}
Wang's structural result concerning $p$-adic
contraction groups (as recalled in\linebreak
Theorem~\ref{Wa2})
can partially be adapted to Lie groups over local fields
of
positive characteristic
(see Theorems~A and~B in \cite{CPO}):
\begin{thm}\label{contraposi}
Let $G$ be an analytic Lie group
over a local field~$\K$ of positive
characteristic.
If~$G$ admits an analytic automorphism $\alpha\colon G\to G$
which is contractive,
then $G$ is a torsion group
of finite exponent.
Moreover, $G$ is nilpotent
and admits a central series
\[
\one\, =\, G_0\, \triangleleft \, G_1 \, \triangleleft \,\cdots\,
\triangleleft \, G_n \, =\, G
\]
where each $G_j$ is a Lie subgroup
of~$G$.
\end{thm}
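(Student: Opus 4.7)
The plan is to reduce the problem to the linear automorphism $\beta := L(\alpha)$ of $L(G)$, extract a central series from the weight decomposition, and lift it to $G$ via the non-linear tools of Section~\ref{sec9}.

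First I would exploit that contractivity of $\alpha$ forces $U_\alpha = G$, which is trivially closed, so Theorem~\ref{Wa3} yields $L(U_\alpha) = U_\beta$ and hence $U_\beta = L(G)$: the linear automorphism $\beta$ is itself contractive, and every eigenvalue $\lambda \in \wb{\K}$ of $\beta$ satisfies $|\lambda| < 1$. Picking an adapted norm as in Proposition~\ref{propadapt}, the weight decomposition $L(G) = \bigoplus_{\rho \in R(\beta)} L(G)_\rho$ has all $\rho < 1$. Since $\beta$ is a Lie algebra automorphism, the bracket respects weights, $[L(G)_\rho, L(G)_\sigma] \subseteq L(G)_{\rho\sigma}$ (interpreted as $0$ if $\rho\sigma \notin R(\beta)$). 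Because $R(\beta)$ is a finite set of real numbers strictly less than $1$, iterated brackets force weights into arbitrarily small ranges and, after finitely many steps, below $\min R(\beta)$, at which point they vanish. Thus $L(G)$ is a nilpotent Lie algebra, and its upper central series $0 = \mathfrak{g}_0 \subsetneq \mathfrak{g}_1 \subsetneq \cdots \subsetneq \mathfrak{g}_n = L(G)$ consists of $\beta$-invariant ideals.

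Next I would lift this filtration to closed $\alpha$-invariant $C^k$-Lie subgroups of $G$. After refinement, each $\mathfrak{g}_j$ can be arranged to be of the form $\bigoplus_{\rho < a_j} L(G)_\rho$ for suitable $a_j \in \;]0,1] \setminus R(\beta)$, and Theorem~\ref{ultrasat} then realizes $\mathfrak{g}_j$ as the Lie algebra of the stable manifold $G_j := W^s_{a_j}(\alpha, 1)$, an $\alpha$-invariant immersed $C^k$-Lie subgroup of $G$. Closedness of $G_j$ is obtained by inductively applying Theorem~\ref{Wa3} within $G_{j+1}$, where $G_j$ appears as the contraction group of $\alpha$ restricted to a suitable layer. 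For $k \geq 2$, centrality $[G, G_{j+1}] \subseteq G_j$ is extracted from the second-order Taylor expansion of the group commutator map, whose leading term in a chart is the Lie bracket on $L(G)$; the Lie-algebra inclusion $[L(G), \mathfrak{g}_{j+1}] \subseteq \mathfrak{g}_j$, which holds by construction, then lifts to the group by an ultrametric-inverse-function-theorem argument. For the general $C^k$ case ($k \geq 1$) one runs the same lifting with the derived series of $L(G)$ in place of the upper central series, obtaining a subnormal series by closed $C^k$-Lie subgroups with abelian successive quotients, and hence solvability of $G$.

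For finite exponent, each successive quotient $A_j := G_{j+1}/G_j$ is an abelian $C^k$-Lie group on which $\alpha$ induces a contractive automorphism. In a chart around $1 \in A_j$ the group operation $\ast$ and the addition $+$ differ only by terms involving iterated commutators, which vanish by abelianness; therefore the $p$-th power map takes the form $f(x) = x \ast x \ast \cdots \ast x = p \cdot x + O(\|x\|^2) = O(\|x\|^2)$, since $p = \car(\K) = 0$ in $\K$. Iterating this quadratic contraction and combining with the stable-manifold identification of $A_j$ with its Lie algebra (afforded by contractivity plus commutativity), one concludes that $A_j$ is isomorphic as a topological group to the additive group $(L(A_j), +)$ and hence has exponent $p$. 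The finite length of the (central or subnormal) series then bounds the exponent of $G$ by $p^n$.

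The hardest step is the lifting of the Lie-algebra filtration to a central series of $G$: in the $p$-adic case one invokes Baker--Campbell--Hausdorff, which is unavailable in positive characteristic because of $p$ in its denominators. The replacement is the combination of stable-manifold theory (Theorem~\ref{ultrasat}), which realizes $\beta$-invariant Lie subalgebras as $C^k$-Lie subgroups, with the second-order Taylor expansion of the commutator map for the centrality verification---precisely the point at which the nilpotency conclusion requires $k \geq 2$, whereas the weaker solvability statement does not.
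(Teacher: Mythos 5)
Your construction of the central series for $k\geq 2$ is essentially the paper's: the paper takes $G_j:=W^s_{a_j}(\alpha,1)$ for suitable thresholds $a_1<\cdots<a_n$ in $\,]0,1[\,\setminus R(L(\alpha))$ and gets centrality from $[W^s_a(\alpha,1),W^s_b(\alpha,1)]\sub W^s_{ab}(\alpha,1)$, i.e.\ from the second order Taylor expansion of the commutator map; your detour through nilpotency of $L(G)$ and its upper central series is unnecessary (and the upper central series terms need not be sums of weight spaces, so only the weight-truncation filtration $\bigoplus_{\rho<a}L(G)_\rho$ is usable here), but that part is repairable. Two other steps, however, have genuine gaps. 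First, the finite-exponent argument: you assert that for an abelian quotient $A_j$ the chart multiplication differs from addition ``only by iterated commutators, which vanish by abelianness,'' and conclude $A_j\isom (L(A_j),+)$, of exponent~$p$. This is false in positive characteristic, where no BCH-type statement is available: abelianness does not kill the higher-order terms of the group law in a chart. For instance, the length-two Witt group $W_2(\K)$ over $\K=\F_p(\!(X)\!)$ is a $2$-dimensional abelian analytic Lie group admitting the contractive automorphism $(a_0,a_1)\mto(ca_0,c^pa_1)$ for $|c|<1$, yet it has exponent $p^2$ and is not topologically isomorphic to a vector group. So neither ``exponent $p$ per layer'' nor the identification with $(L(A_j),+)$ follows. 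The paper obtains torsion and finite exponent by a completely different route: the decomposition $G=\tor(G)\times H_{p_1}\times\cdots\times H_{p_m}$ of totally disconnected contraction groups from \cite{SIM}, the observation that $G$ is locally pro-$p$ (so $G=\tor(G)\times H_p$), and a counting argument comparing the size of $p^k$-th powers of balls in $H_p$ and in $G$ to force $H_p=\{1\}$.

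Second, your solvability argument for general $k\geq 1$ does not go through. Lifting the derived series of $L(G)$ requires (i) integrating arbitrary $\beta$-invariant subalgebras to Lie subgroups, whereas Theorem~\ref{ultrasat} only produces subgroups for the special subalgebras $\bigoplus_{\rho<a}L(G)_\rho$ (integrating general subalgebras in positive characteristic is exactly the kind of problem the paper lists as open), and (ii) controlling group commutators by Lie brackets, which again needs the second order Taylor expansion and hence $k\geq 2$. Indeed, if your argument worked for $k=1$ it would prove nilpotency of $C^1$-Lie contraction groups, which the paper states as an open problem (Problem~\ref{prob3}). The paper's solvability statement for all $k$ instead rests on the classification of the simple totally disconnected contraction groups from \cite{SIM} (see \cite{CPO}), not on any Lie-algebra filtration.
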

\noindent
{\bf Some ideas of the proof.}
Let $p:=\car(\K)$.
It is known from~\cite{SIM}
that $G=\tor(G)\times H_{p_1}\times\cdots
\times H_{p_m}$
with certain $\alpha$-stable
$p_j$-adic Lie groups~$H_{p_j}$,
for suitable primes $p_1,\ldots, p_m$.
Since~$G$ is locally pro-$p$,
we must have $G=\tor(G)\times H_p$.
If $H_p$ was non-trivial,
then the size of the sets
of $p^k$-th powers of balls
in the $p$-adic Lie group~$H_p$
and such in~$G$ would differ
too much as~$k$ tends to~$\infty$,
and one can reach a contradiction
(if one makes this idea more precise,
as in the proof of \cite[Theorem~A]{CPO}).
Hence $G=\tor(G)$ is a torsion group.\\[2.1mm]
To see that $G$ is nilpotent,
we exploit that the $a$-stable
submanifolds $W^s_a(\alpha,1)$ are
Lie subgroups for all $a\in \;]0,1[\, \setminus R(\alpha)$
(see \cite[Proposition 6.2]{STF}).
Since
\[
[W^s_a(\alpha,1),W^s_b(\alpha,1)]\;\sub\;
W^s_{ab}(\alpha,1)
\]
holds for the commutator subgroups
whenever $a,b, ab\in
]0,1[\, \setminus R(\alpha)$
(as follows from
the second order Taylor expansion of
the commutator map),
one can easily
pick numbers $a_1<\cdots< a_n$
in $\,]0,1[\, \setminus R(L(\alpha))$
for some~$n$,
such that the Lie subgroups
$G_j:=W^s_{a_j}(\alpha,1)$
(for $j\in \{1,\ldots, n\}$)
form the desired
central series.\,\Punkt
\section{Further results that can be adapted
to positive characteristic}\label{sec12}
We mention that a variety of further
results can be generalized
from $p$-adic Lie groups
to Lie groups over arbitrary local fields,
if one assumes that the relevant
automorphisms have closed contraction groups.
The ultrametric inverse function theorems
(Propositions~\ref{invfct} and~\ref{invpar})
usually suffice as a replacement
for the
naturality of $\exp$
(although we cannot linearize,
at least balls are only deformed
as by a linear map).
And Theorem~\ref{Wa3} gives control
over the eigenvalues of~$L(\alpha)$.\\[2.1mm]
We now state two
results which can be generalized to
positive characteristic\linebreak
following this pattern.
\begin{numba}\label{unspec}
To discuss these results,
introduce more
terminology.
First, we recall from \cite{Pal}
that
a totally disconnected,
locally compact group~$G$
is called \emph{uniscalar}
if $s_G(x)=1$ for each $x\in G$.
This holds if and only if each group element
$x\in G$ normalizes some compact, open subgroup~$V_x$
(which may depend on~$x$).
It is natural to ask
whether this condition implies
that $V_x$ can be chosen independently
of $x$, i.e., whether $G$ has a compact, open,
\emph{normal} subgroup.
A suitable $p$-adic Lie group
(see \cite[\S6]{GW1})
shows that a positive answer can only be expected
if $G$ is compactly generated.
But even in the compactly generated
case, the answer is negative in general
(see \cite{BaM} and preparatory work in \cite{KaW}),
and one has to restrict attention
to particular classes of groups
to obtain a positive answer
(like compactly generated $p$-adic Lie groups).
If $G$ has the (even stronger)
property that every identity neighbourhood
contains an open, compact, normal subgroup
of~$G$, then $G$ is called \emph{pro-discrete}.
Finally, a Lie group~$G$ over a local field
is \emph{of type~$R$}
if the eigenvalues of $L(\alpha)$
have absolute value~$1$,
for each inner automorphism~$\alpha$
(see~\cite{Ra1}).
\end{numba}
\noindent
Using the above strategy,
one can generalize
results from~\cite{Ra1}
and \cite{GW1} (see~\cite{SPO}):
\begin{prop}
Let $G$ be an analytic Lie group
over a local field~$\K$,
and~$\alpha$ be an analytic automorphism
of~$G$.
Then the following properties
are equivalent:
\begin{itemize}
\item[\rm(a)]
$U_\alpha$ is closed,
and $s_G(\alpha)=s_G(\alpha^{-1})=1$;
\item[\rm(b)]
All eigenvalues of $L(\alpha)$
in~$\wb{\K}$
have absolute value~$1$;
\item[\rm(c)]
Every identity neighbourhood
of~$G$ contains a compact, open subgroup
$U$ which is $\alpha$-stable,
i.e., $\alpha(U)=U$.
\end{itemize}
In particular, $G$ is of type~$R$
if and only if $G$ is uniscalar
and $U_\alpha$ is closed
for each inner automorphism~$\alpha$ of~$G$
$($in which case $U_\alpha=\{1\})$.\,\Punkt
\end{prop}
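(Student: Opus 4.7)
I would run the cycle (a)$\,\Rightarrow\,$(b)$\,\Rightarrow\,$(c)$\,\Rightarrow\,$(a) and then read off the ``type~$R$'' clause by applying the equivalence to every inner automorphism. The central tool is the identity $s_G(\alpha)=s_{L(G)}(L(\alpha))$ (valid when $U_\alpha$ is closed) from the scale theorem of Section~\ref{sec10}, combined with the explicit product formula~(\ref{stasta}) of Proposition~\ref{proptdlin}.

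For (a)$\,\Leftrightarrow\,$(b), let $\beta:=L(\alpha)$ and let $\lambda_1,\ldots,\lambda_d$ be the eigenvalues of $\beta$ in $\overline{\K}$. Applying~(\ref{stasta}) to $\beta$ and to $\beta^{-1}$ yields
\[
s_{L(G)}(\beta)=\prod_{|\lambda_j|\geq 1}|\lambda_j|,\qquad s_{L(G)}(\beta^{-1})=\prod_{|\lambda_j|\leq 1}|\lambda_j|^{-1}.
\]
Under (a), the scale theorem of Section~\ref{sec10} identifies the two $s_G$-values with the two $s_{L(G)}$-values, forcing both products above to equal~$1$; since every factor is $\geq 1$, each $|\lambda_j|=1$, giving (b). Conversely, under (b) we have $s_{L(G)}(\beta)=1$; the same scale theorem asserts that $s_G(\alpha)$ either equals $1$ (with $U_\alpha$ closed) or is a \emph{proper} divisor of $1$, which is impossible for a positive integer. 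Hence $U_\alpha$ is closed and $s_G(\alpha)=1$; repeating the argument for $\alpha^{-1}$ completes (a).

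For (b)$\,\Rightarrow\,$(c), I would equip $L(G)$ with a norm adapted to $\beta$ (Proposition~\ref{propadapt}). Since $R(\beta)=\{1\}$, axiom~{\bf A3} makes $\beta$ an isometry. Fix a chart $\phi\colon U\to V\subseteq L(G)$ with $\phi(1)=0$ and let $f:=\phi\circ\alpha\circ\phi^{-1}$, so that $f(0)=0$ and $f'(0)=\beta$. By Proposition~\ref{invfct} (or its $C^k$-analogue) there exists $r>0$ with $f(B_s(0))=\beta.B_s(0)=B_s(0)$ for every $s\in\,]0,r]$, the last equality because $\beta$ is a bijective isometry. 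Shrinking $r$ if necessary, Lemma~\ref{laballs} makes each $V_s:=\phi^{-1}(B_s(0))$ a compact open subgroup, and these form a neighbourhood basis of~$1$ with $\alpha(V_s)=V_s$ by construction, giving (c). For (c)$\,\Rightarrow\,$(a), any $\alpha$-stable compact open $V$ satisfies $[\alpha(V):\alpha(V)\cap V]=[V:V]=1$, whence $s_G(\alpha)=1$ and likewise $s_G(\alpha^{-1})=1$. To see $U_\alpha=\{1\}$ (hence closed), take $x\in U_\alpha$ and any $\alpha$-stable compact open $V$ from the basis in~(c); some $\alpha^N(x)$ lies in $V$, and $\alpha^{-N}(V)=V$ yields $x\in V$; as such $V$ form a neighbourhood basis of~$1$, $x=1$.

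For the final clause, condition~(b) applied to every inner automorphism $\alpha_x$ is precisely the type~$R$ property, while (a) for every $\alpha_x$ says that $U_{\alpha_x}$ is closed for each $x\in G$ and (using $\alpha_x^{-1}=\alpha_{x^{-1}}$ and $s_G(\alpha_x)=s_G(x)$) that $s_G(x)=1$ for every $x\in G$, i.e., $G$ is uniscalar. The parenthetical $U_\alpha=\{1\}$ is a by-product of (c)$\,\Rightarrow\,$(a). The main obstacle is the step (b)$\,\Rightarrow\,$(c): in positive characteristic $\alpha$ need not be locally linear (see~7.3), so one cannot simply conjugate $\alpha$ to $\beta$ via a chart; the argument relies crucially on the exactness in the ultrametric inverse function theorem --- ``the strongest wins'' --- which forces $f$ to map balls precisely onto balls despite the genuine nonlinear higher-order terms.
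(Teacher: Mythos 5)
Your argument is correct, and it follows precisely the strategy the paper itself indicates: the proposition is stated there without proof (it is deferred to~\cite{GW2}), the text only remarking that the ultrametric inverse function theorem replaces the naturality of $\exp$ and that Theorem~\ref{Wa3} gives control over the eigenvalues of $L(\alpha)$ --- which is exactly how you proceed, including the nice use of the ``proper divisor'' clause of the scale theorem containing~(\ref{hasanag}) to extract closedness of $U_\alpha$ from~(b). Two small points should be made explicit. First, in (a)$\,\Rightarrow\,$(b) you invoke $s_G(\alpha^{-1})=s_{L(G)}(\beta^{-1})$; that identity requires the contraction group of the automorphism in question to be closed, and hypothesis~(a) only gives closedness of $U_\alpha$, not of $U_{\alpha^{-1}}$ --- this is filled by Theorem~\ref{Wa3}, which shows that closedness of $U_\alpha$ already forces $U_{\alpha^{-1}}$ (and $M_\alpha$) to be closed Lie subgroups. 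Second, in (b)$\,\Rightarrow\,$(c) you write $f'(0)=\beta$ for $f=\phi\circ\alpha\circ\phi^{-1}$; strictly one has $f'(0)=T_1\phi\circ\beta\circ (T_1\phi)^{-1}$, so either normalize the chart so that $T_1\phi=\id_{L(G)}$ or transport the adapted norm along $T_1\phi$, in order that $f'(0)$ be an isometry for the norm used in the balls; with that adjustment the argument via the $C^k$-version of Proposition~\ref{invfct} and Lemma~\ref{laballs} goes through verbatim, and the remaining implications and the type-$R$ clause are as you state.
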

\noindent
Using a fixed point theorem for group
actions on buildings by A. Parreau,
it was shown in \cite{Par}
(and \cite{GW1}) that every
compactly generated,
uniscalar $p$-adic Lie group
is pro-discrete.
More generally, one can prove (see~\cite{SPO}):
\begin{prop}
Every compactly generated
analytic Lie group of type~$R$ over
a local field is pro-discrete.\,\Punkt
\end{prop}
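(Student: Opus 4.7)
The strategy is to follow the $p$-adic template of \cite{GW1} based on Parreau's fixed point theorem, with the preceding proposition supplying the crucial input that makes the argument work over an arbitrary local field. Fix a compact symmetric generating set $K=K^{-1}$ of $G$ and an arbitrary identity neighbourhood $W$; the goal is a compact open $G$-normal subgroup contained in $W$. Applying the preceding proposition to each inner automorphism $\mathrm{inn}(g)$ for $g\in G$ (a $C^k$-automorphism whose Lie algebra has eigenvalues of absolute value $1$ by the type~$R$ hypothesis), conditions (a)--(c) hold for every such $\mathrm{inn}(g)$. Thus each single element of $G$ normalizes arbitrarily small compact open subgroups of $G$; the remaining task is to upgrade this pointwise statement to a \emph{simultaneous} statement for the compact set $K$.

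Next, I would pass to the adjoint action on $L(G)$. Set $\mathrm{Ad}(g):=L(\mathrm{inn}(g))$ to obtain a continuous homomorphism $\mathrm{Ad}\colon G\to\GL(L(G))$, and let $\mathrm{Ad}(G)$ act by isometries on the Bruhat--Tits building $X$ of $\GL(L(G))$. The space $X$ is a complete CAT(0) space over any local field whose vertices parametrize homothety classes of $\bO$-lattices in $L(G)$. Type~$R$ implies each $\mathrm{Ad}(g)$ is elliptic on $X$; combined with compact generation of $G$ and the uniscalarity $s_G(g)=s_G(g^{-1})=1$ (invoked through condition~(c), which provides small compact open subgroups stable under both $g$ and $g^{-1}$), one shows that $\mathrm{Ad}(G)$ has a bounded orbit on $X$. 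Parreau's fixed point theorem then furnishes an $\mathrm{Ad}(G)$-invariant $\bO$-lattice $\Lambda\subseteq L(G)$, equivalently an ultrametric norm $\|.\|_\Lambda$ on $L(G)$ under which every $\mathrm{Ad}(g)$ is an isometry.

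Finally, I would promote $\Lambda$ to the sought subgroup. Equip $L(G)$ with $\|.\|_\Lambda$, pick a chart $\phi\colon U\to V\subseteq L(G)$ around $1$ with $\phi(1)=0$ as in Lemma~\ref{laballs}, and apply Proposition~\ref{invpar} to the family of conjugations $(g,x)\mapsto \phi(g\phi^{-1}(x)g^{-1})=\mathrm{Ad}(g).x+\cdots$ with parameter $g$ ranging over the compact set $K$: since each $\mathrm{Ad}(g)$ is an isometry of $\|.\|_\Lambda$, there exists $r>0$ with $\phi^{-1}(B_r(0))\subseteq W$ such that $\phi(g\phi^{-1}(B_s(0))g^{-1})=B_s(0)$ for all $g\in K$ and $s\in\;]0,r]$. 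Choose $c\in\K^\times$ with $|c|\leq r$: then $c\Lambda=\{x\colon \|x\|_\Lambda\leq |c|\}$ is a compact open subgroup of $(B_r(0),+)$, hence of $(B_r(0),*)$ by Remark~\ref{smidx}, and $N:=\phi^{-1}(c\Lambda)$ is a compact open subgroup of $G$ contained in $W$ with $gNg^{-1}=N$ for every $g\in K$. Since $K$ generates $G$, $N$ is normal in $G$.

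The main obstacle is the middle step: establishing that $\mathrm{Ad}(G)$ has a bounded orbit on $X$. Pointwise ellipticity of each $\mathrm{Ad}(g)$ is immediate from type~$R$, but concatenating generators from $K$ could a priori produce orbits of growing diameter; one must exploit the full force of uniscalarity (not merely ellipticity) and the joint stability of small compact open subgroups under both $g$ and $g^{-1}$ supplied by the preceding proposition to bound the whole orbit. Once bounded orbits are established, Parreau's theorem and the conversion from $\Lambda$ to $N$ are insensitive to $\car(\K)$, which is why the $p$-adic argument of \cite{GW1} generalizes to arbitrary local fields.
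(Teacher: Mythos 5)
The paper itself gives no proof of this proposition -- it is stated with a reference to \cite{GW2} (in preparation), and the surrounding text only indicates the strategy: combine the preceding equivalence (a)--(c) with the ultrametric inverse function theorems in place of the naturality of $\exp$, following the $p$-adic argument of \cite{Par} and \cite{GW1} based on Parreau's fixed point theorem. Your first and third steps implement exactly this pattern, and the final step is sound: once an $\mathrm{Ad}(G)$-invariant lattice (equivalently, an invariant ultrametric norm) is available, a finite cover of the compact generating set and Proposition~\ref{invpar} give $\phi(g\phi^{-1}(B_s(0))g^{-1})=\mathrm{Ad}(g).B_s(0)=B_s(0)$ for all $g\in K$ and all small $s$, so the balls themselves (you do not even need the detour through $c\Lambda$ and Remark~\ref{smidx}) are arbitrarily small compact open subgroups normalized by $K$ and hence by $G=\langle K\rangle$.

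The genuine gap is your middle step, and you have also misdiagnosed where the difficulty lies. First, the assertion ``type~$R$ implies each $\mathrm{Ad}(g)$ is elliptic'' is true but needs the (easy, non-archimedean) argument you omit: writing $\mathrm{Ad}(g)=su$ with commuting semisimple and unipotent parts, the eigenvalue condition bounds $\{s^n\}$, and $u^n=\sum_k\binom{n}{k}N^k$ is bounded because binomial coefficients lie in $\bO$; hence $\{\mathrm{Ad}(g)^n\colon n\in\Z\}$ is bounded, i.e.\ $\mathrm{Ad}(g)$ fixes a point of the building. Second, and more importantly, you then leave open how to pass from pointwise ellipticity to a bounded orbit of $\mathrm{Ad}(G)$, suggesting one must ``exploit the full force of uniscalarity'' and the joint $(g,g^{-1})$-stable subgroups from condition (c). No such extra input is available or needed: the passage from ``every element of the group is elliptic'' to a global fixed point is precisely the content of Parreau's fixed point theorem, which applies to subgroups generated by a bounded subset (this is exactly why it covers compactly generated groups, and why \cite{Par} already yields the $p$-adic case); one only has to note that $\mathrm{Ad}(K)$ is compact, hence bounded. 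As written, your proof defers its central step to an orbit-boundedness argument you neither supply nor need, instead of invoking the fixed point theorem whose hypotheses (all elements elliptic, bounded generation) you have already verified. With that correction the argument closes, and it agrees with the route the paper attributes to \cite{GW2}.
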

\noindent
As mentioned in~\ref{unspec},
results of the preceding form cannot be expected for general
totally disconnected, locally compact groups.
The author is grateful to the referee for the
following additional example (cf.\ also \cite{BaM} and \cite{KaW}).
Consider the Grigorchuk group~$H$.
Recall that~$H$ is a certain infinite (discrete) group
which is a $2$-group (in the sense that
every element has order a power of~$2$)
and finitely generated (by four elements).
It is known that~$H$ admits a transitive
left action
\[
H \times \Z\to \Z,\quad (h,n)\mto h.n
\]
on the set of integers such that each $h\in H$
commensurates $\N$ (in the sense that $h.\N$ and $\N$ have
finite symmetric difference);
such an action exists since~$H$ admits Schreier graphs
with several ends. The preceding action induces a left
action\footnote{for $(a_n)_{n\in \Z}\in \F_2^\Z$ with support bounded below.}
\[
\F_2(\!(X)\!)\to\F_2(\!(X)\!),\quad
\sum_{n\in \Z}a_nX^n\mto \sum_{n\in \Z}a_nX^{h.n}\vspace{-1mm}
\]
by automorphisms of the topological group $(\F_2(\!(X)\!),+)$,
endowed with its usual topology (such that $\F_2[\![X]\!]$ is a compact
open subgroup).
We use this action to form the semi-direct product
$G:=\F_2(\!(X)\!)\rtimes H$.
\begin{prop}
$G:=\F_2(\!(X)\!)\rtimes H$
is a compactly generated, totally disconnected
locally compact torsion group.
For every $g\in G$ and identity neighbourhood
$W\sub G$, there exists a compact open subgroup~$V$ of~$G$
such that~$V\sub W$ and~$V$ is normalized by~$g$;
moreover, the contraction group $U_{\alpha_g}$
of the inner automorphism $\alpha_g\colon G\to G$, $x\mto gxg^{-1}$
is trivial. However, $G$ does not have a compact, open,
normal subgroup; in particular, $G$ is not pro-discrete.
\end{prop}
\begin{proof}
After shrinking~$W$, we may assume that $W$ is a compact open subgroup
of~$G$. Since $\F_2(\!(X)\!)$ and $G/F_2(\!(X)\!)\cong H$ are torsion
groups, also~$G$ is a torsion group. If $m\in \N_0$ is the order
of $g\in G$, then $V:=\bigcap_{n=0}^mg^n(W)$ is a compact
open subgroup of~$G$ which is normalized by~$g$ and contained
in~$W$. If $e\not=x\in G$, after shrinking~$W$ we may
assume that $x\not\in W$ and hence $x\not\in V$.
As the complement of~$V$ in~$G$ is stable under
conjugation with~$g$, we deduce that $g^kxg^{-k}\not\in V$
for all $k\in \N$ and hence $g\not\in U_{\alpha_g}$,
whence $U_{\alpha_g}=\{e\}$ is trivial.
Suppose we could find a compact, open, normal subgroup $V\sub G$.
Then $X^n\F_2[\![X]\!]\sub V$ for some $n\in \Z$
and thus $X^n\in V$. Since $H$ acts transitively on~$\Z$,
we deduce that $X^m\in V$ for each $m\in \Z$
and thus $\F_2(\!(X)\!)\sub V$, as~$V$ is a closed subgroup of~$G$.
But then $\F_2(\!(X)\!)$ would be a closed subgroup of~$V$
and so $\F_2(\!(X)\!)$ would be compact, a contradiction.
\end{proof}
{\bf Helge  Gl\"{o}ckner}, Universit\"at Paderborn, Institut f\"{u}r Mathematik,\\
Warburger Str.\ 100, 33098 Paderborn, Germany;\\[1mm]
e-mail: {\tt  glockner@math.upb.de}\vfill
\end{document}